\documentclass[reqno]{amsart}
\usepackage{bm,amsmath,amsthm,amssymb,mathtools,verbatim,amsfonts,tikz-cd,mathrsfs}
\usepackage{enumitem}
\usepackage{float}

\usepackage[hidelinks,colorlinks=true,linkcolor=blue, citecolor=black,linktocpage=true]{hyperref}
\usepackage{graphicx}
\usepackage[alphabetic]{amsrefs}
\usepackage{caption}
\usepackage{morefloats}
\usepackage{chngcntr}

\counterwithin{figure}{section}

\setlength{\parindent}{1em}

\newtheorem{thm}{Theorem}[section]
\newtheorem{prop}[thm]{Proposition}

\newtheorem{cor}[thm]{Corollary}
\newtheorem{lem}[thm]{Lemma}

\theoremstyle{definition}
\newtheorem{define}[thm]{Definition}

\theoremstyle{remark}
\newtheorem{rem}[thm]{Remark}

\newtheorem{question}[thm]{Question}

\newcommand{\ve}[1]{\boldsymbol{\mathbf{#1}}}
\newcommand{\R}{\mathbb{R}}

\newcommand{\ul}[1]{\underline{#1}}

\newcommand{\Z}{\mathbb{Z}}
\newcommand{\N}{\mathbb{N}}

\renewcommand{\d}{\partial}
\renewcommand{\subset}{\subseteq}

\renewcommand{\tilde}{\widetilde}
\renewcommand{\bar}{\overline}

\newcommand{\iso}{\cong}

\DeclareMathOperator{\gr}{{gr}}

\DeclareMathOperator{\id}{{id}}

\DeclareMathOperator{\Int}{{int}}

\DeclareMathOperator{\im}{{im}}

\DeclareMathOperator{\Spin}{{Spin}}

\DeclareMathOperator{\Sym}{{Sym}}

\DeclareMathOperator{\Tors}{{Tors}}

\DeclareMathOperator{\Cone}{Cone}

\newcommand{\bF}{\mathbb{F}}

\newcommand{\bH}{\mathbb{H}}

\newcommand{\bK}{\mathbb{K}}
\newcommand{\bL}{\mathbb{L}}

\newcommand{\bT}{\mathbb{T}}

\newcommand{\CP}{\mathbb{CP}}

\newcommand{\cA}{\mathcal{A}}

\newcommand{\cC}{\mathcal{C}}
\newcommand{\cD}{\mathcal{D}}

\newcommand{\cF}{\mathcal{F}}
\newcommand{\cG}{\mathcal{G}}
\newcommand{\cH}{\mathcal{H}}

\newcommand{\cM}{\mathcal{M}}

\newcommand{\cR}{\mathcal{R}}
\newcommand{\cS}{\mathcal{S}}
\newcommand{\cT}{\mathcal{T}}

\newcommand{\frs}{\mathfrak{s}}

\newcommand{\scA}{\mathscr{A}}
\newcommand{\scC}{\mathscr{C}}
\newcommand{\scI}{\mathscr{I}}
\newcommand{\scU}{\mathscr{U}}
\newcommand{\scV}{\mathscr{V}}

\newcommand{\cCFL}{\mathcal{C\! F\! L}}
\newcommand{\cCFK}{\mathcal{C\! F\! K}}

\newcommand{\cHFK}{\mathcal{H\! F\! K}}

\newcommand{\CFK}{\mathit{CFK}}

\newcommand{\CFI}{\mathit{CFI}}
\newcommand{\HFI}{\mathit{HFI}}

\newcommand{\PD}{\mathit{PD}}

\newcommand{\xs}{\ve{x}}
\newcommand{\ys}{\ve{y}}
\newcommand{\zs}{\ve{z}}
\newcommand{\ws}{\ve{w}}

\newcommand{\as}{\ve{\alpha}}
\newcommand{\bs}{\ve{\beta}}

\renewcommand{\a}{\alpha}
\renewcommand{\b}{\beta}
\newcommand{\g}{\gamma}

\usepackage{leftidx}

\title{New  Heegaard Floer slice genus and clasp number bounds}


\author{Andr\'{a}s Juh\'{a}sz}
\address{Mathematical Institute, University of Oxford, Andrew Wiles Building,
Radcliffe Observatory Quarter, Woodstock Road, Oxford, OX2 6GG, UK}
\email{juhasza@maths.ox.ac.uk}

\author{Ian Zemke}
\address{Department of Mathematics\\Princeton University\\  Princeton, NJ 08544, USA}
\email{izemke@math.princeton.edu}

\begin{document}


\begin{abstract}
We define several concordance invariants using knot Floer homology which give improvements over known slice genus and clasp number bounds from Heegaard Floer homology. We also prove that the involutive correction terms of Hendricks and Manolescu give both a slice genus and a clasp number bound.
\end{abstract}

\maketitle


\section{Introduction}

If $K\subset S^3$ is a knot, the \emph{4-dimensional clasp number} $c_4(K)$, introduced by Shibuya~\cite{Shibuya}, 
is the smallest integer $c$ such that there is a normally immersed disk in $B^4$ bounding $K$ with $c$ double points.
(A surface in a 4-manifold $W$ is \emph{normally immersed} if it is the image of an immersion $f \colon S \to W$ such that $f(S) \cap \d W = f(\d S)$, $f$ is transverse to $\d W$, and all self-intersections of $f$ are transverse double points in $\Int(W)$.)
There are refinements $c_4^+(K)$ (respectively $c_4^-(K)$), which are the minimal number of positive (respectively negative)  double points which appear in any such immersed disk. Of course,
\[
c_4^+(K)+c_4^-(K)\le c_4(K).
\]

Reversing the orientation of a normally immersed surface in $B^4$ does not change the sign of a double point, and hence changing the orientation of a knot does not affect $c_4^+$ or $c_4^-$. However, if $-K$ denotes the mirror of $K$, then
\[
c_4^+(K)=c_4^-(-K).
\]
Another basic inequality is
\[
g_4(K)\le c_4(K)\le u(K),
\]
where $g_4(K)$ denotes the slice genus, and $u(K)$ the unknotting number.


The first example of a knot $K$ with $g_4(K) < c_4(K)$ was given by Murakami and Yasuhara~\cite{MY}.
They used the linking form of the branched double cover to show that $g_4(8_{16}) = 1$ and $c_4(8_{16}) = 2$. 
Using the Heegaard Floer $d$-invariants of branched double covers, Owens and Strle \cite{OwensStrle} gave lower bounds on the clasp number, and completed the computation of the clasp number for all prime knots with ten or fewer crossings.
Livingston~\cite{LivingstonTL} used the more classical Tristram--Levine signatures to give lower bounds on the clasp
number. The lower bounds arising this way are never greater than $2g_4(K)$, which motivated him to ask the following question,
which is still open:

\begin{question}\label{qn:c-g}
Are there knots with $c_4(K) > 2g_4(K)$?
\end{question}

More recently, Kronheimer and Mrowka \cite{KH-Instantons-concordance} constructed  a 1-parameter family of concordance invariants using instanton knot Floer homology, from which both slice genus and clasp number bounds may be obtained. They asked if
this could be used to show that $c_4(K) > g_4(K)$ where other methods have failed. For example, $g_4(7_4) = 1$ and $c_4(7_4) = 2$,
but previously the only thing known about $c_4(\#^n 7_4)$ was that it lies in the interval $[n, 2n]$. 
Daemi and Scaduto \cite{DaemiScadutoClasp} have recently shown that
\[
c_4(\#^n 7_4) - g_4(\#^n 7_4) \ge n/5
\]
using instantons.

The motivation of our present work is to investigate these questions from the perspective of Heegaard Floer homology.

\subsection{The concordance invariants $Y_n$}

In this paper, we introduce a family of concordance invariants $Y_{n}(K)\in \N$, indexed by $n\in \N$. 
We will prove the following:

\begin{thm}\label{thm:Y-genus-clasp}
 Suppose $K$ is a knot in $S^3$.
\begin{enumerate}
\item If $0\le n\le g_4(K)$, then
\[
Y_n(K)\le \left \lceil \frac{g_4(K)-n}{2} \right\rceil,
\]
and $Y_n(K) = 0$ if $n > g_4(K)$. 
\item If $0\le n\le c_4^+(K)$, then
\[
Y_n(K)\le \left \lceil \frac{c_4^+(K)-n}{2}\right \rceil,
\]
and $Y_n(K) = 0$ if $n > c_4^+(K)$.
\end{enumerate}
If $n\ge g_4(K)$ or $n\ge c^+(K)$, then $Y_n(K)=0$.
\end{thm}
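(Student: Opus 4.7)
The plan is to establish both bounds via functoriality of knot Floer homology under decorated link cobordisms in $B^4$, following the TQFT framework developed by Juh\'asz--Marengon--Zemke. A slice surface $\Sigma \subset B^4$ of genus $g = g_4(K)$ bounding $K$, endowed with suitable dividing arcs, yields a decorated cobordism from $(S^3, K)$ to $(S^3, U)$ and hence a chain map on the appropriately completed knot Floer complexes. The invariant $Y_n$ should be set up so that the existence of such a map forces $Y_n(K) \le \lceil (g-n)/2 \rceil$. The shift by $n$ and the factor of $1/2$ reflect structural features of the definition: morally, a $1$-handle/$2$-handle pair contributes together as one unit of genus, and $Y_n$ is insensitive to the first $n$ such units.

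For the clasp number bound, the starting point is a normally immersed disk $D \subset B^4$ bounding $K$ with $c_4^+(K)$ positive double points (and some number of negative ones). The task is to convert $D$ into something to which the cobordism machinery applies, while tracking the sign of each double point. One natural route is to blow up at each positive double point, producing an embedded genus-$0$ surface in $B^4 \# N\overline{\CP^2}$ (where $N = c_4^+(K)$) that meets each exceptional sphere transversely in two points. An alternative is to introduce an explicit local model for the cobordism near each positive double point and directly analyze the induced map on $\CFK$. Either way, the asymmetry between positive and negative double points in the bound traces back to the signs of the relevant self-intersection numbers, combined with the behavior of knot Floer homology under the corresponding stabilization.

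The main obstacle will be the refined accounting for \emph{positive} double points. A naive handle replacement gives only the weaker bound $Y_n(K) \le \lceil(c_4(K)-n)/2\rceil$; strengthening this to $c_4^+(K)$ requires exploiting the sign of each double point in an essential way, and is typically the most delicate step in Heegaard Floer clasp number arguments. The concluding claim that $Y_n(K) = 0$ once $n \ge g_4(K)$ or $n \ge c_4^+(K)$ then follows from the two bounds by observing that $\lceil 0/2 \rceil = 0$, together with the explicit vanishing clauses of parts (1) and (2).
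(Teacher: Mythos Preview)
Your outline for part~(1) is too schematic to count as a proof. The decisive technical point in the paper is not merely that a slice surface induces \emph{some} map on $\cCFK^-$, but that the maps associated to many different decorations of a \emph{stabilized} surface agree on homology (Proposition~\ref{prop:main-claim-dividing-set-ind}). Concretely, one chooses decorations $\cF_{-n},\cF_{-n+2},\dots,\cF_n$ with prescribed $(\gr_{\ws},\gr_{\zs})$-shifts, sets $z_i=F_{B^4,\cF_i}(1)$, and then uses the dividing-set independence to produce odd-index chains $z_i$ with $\partial z_i=\scU z_{i+1}+\scV z_{i-1}$. The sum $\sum z_i\otimes x_{-i}$ is then an $\bF[U]$-non-torsion cycle in $A_0(\cC_K\otimes\cS_n^\vee)$ of the correct grading. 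Nothing in your sketch indicates how you would assemble this family of elements or why the required boundary relations hold; the phrase ``$Y_n$ is insensitive to the first $n$ such units'' is not a substitute for this construction.

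For part~(2) your proposed route is backwards, and this is a genuine error rather than just vagueness. You suggest blowing up at the \emph{positive} double points to obtain a genus-$0$ surface in $B^4\#N\bar{\CP}^2$ with $N=c_4^+(K)$. But blowing up a double point produces a surface meeting the exceptional sphere twice, hence not null-homologous; and you have said nothing about the negative double points, so the result is not even embedded. The paper does the opposite: it \emph{smooths} the positive double points (each smoothing raises the genus by one, giving a surface of genus $C^+=c_4^+(K)$) and \emph{blows up} at the negative double points, landing in $B^4\# C^-\bar{\CP}^2$ with a null-homologous surface. For a $\Spin^c$ structure of maximal square on $\bar{\CP}^2$ the quantity $(c_1(\frs)^2-2\chi-3\sigma)/4$ vanishes, so the negative blow-ups cost nothing in the grading formula and only the genus $C^+$ enters. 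This is exactly why the bound involves $c_4^+$ rather than $c_4$; the asymmetry comes from which sign of double point can be absorbed for free into negative-definite topology, not from any special local model at positive points.
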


The invariants $Y_{n}$ are formally similar to Rasmussen's correction terms of large surgeries \cite{RasmussenKnots}, which we denote by $V_n(K)$. In comparison with Theorem~\ref{thm:Y-genus-clasp},
\begin{equation}
V_n(K)\le \left \lceil \frac{g_4(K)-n}{2} \right\rceil. \label{eq:rasmussen-bound}
\end{equation}
if $0 \le n \le g_4(K)$, and $V_n(K) = 0$ for $n > g_4(K)$;
see \cite{RasmussenGodaTeragaito}*{Theorem~2.3}. We will prove that
\begin{equation}
Y_n(K)\ge V_n(K),
\label{eq:comparison-with-Vk}
\end{equation}
and also give examples where the invariants $Y_n(K)$ give better genus bounds than the invariants $V_n(K)$.
While $Y_n(K)$ is a concordance invariant,
it is not a concordance homomorphism, as $Y_n(-K) \neq -Y_n(K)$, unless they both vanish.
Furthermore, often a knot $K$ has the property that both
 $Y_n(K)$ and  $Y_m(-K)$ are non-zero for some $n,m\in \N$. 
This allows one to use $Y_n$ to give clasp number bounds better than $g_4$,
but at most $2g_4$; cf.~Question~\ref{qn:c-g}.

Rasmussen's genus bound in~\eqref{eq:rasmussen-bound} was originally proven by considering the maps induced by surgeries. 
An alternate proof \cite{ZemAbsoluteGradings}*{Section~10} may be given by considering the link cobordism maps due to the first author~\cite{JCob} and the second author~\cite{ZemCFLTQFT}, associated to the surface itself, for different choices of decorations. In this paper, we prove a strong result relating the elements induced by different decorations of the surface. See Section~\ref{sec:bounds-and-Y}. A consequence of our techniques is that if there is a genus $n$ slice surface for $S$, then there is a grading preserving chain map from the staircase complex $\cS_n$ to $\cCFK^-(K)$, which becomes an isomorphism on homology after inverting $U$. Here, $\cS_n$ denotes the staircase complex with $n$ steps, which are all of length 1.

We recall that Hom and Wu \cite{HomWuNu+} define an invariant $\nu^+(K)$, which is the minimal integer $s$ such that $V_s(K)=0$. The invariant $\nu^+(K)$ is a lower bound for the slice genus.  We define the following analog of $\nu^+$:
\[
\omega^+(K):=\min \left\{ n\in \N: Y_{n}(K)=0\right\}.
\]
Theorem~\ref{thm:Y-genus-clasp} implies that
\[
\omega^+(K)\le \min\{g_4(K), c_4^+(K)\}.
\]
Furthermore, $\omega^+(-K)\le c_4^-(K)$, where $-K$ denotes the mirror of $K$. Summarizing, we have the following:
\begin{thm}\label{thm:omega-bound-c}
 If $K$ is a knot in $S^3$, then
 \[
\omega^+(K)+\omega^+(-K)\le c_4(K).
 \]
\end{thm}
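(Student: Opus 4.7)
The plan is to assemble the bound from Theorem~\ref{thm:Y-genus-clasp}(2) applied to both $K$ and its mirror, then combine with the elementary inequality between $c_4^\pm$ and $c_4$ stated at the beginning of the introduction. This makes Theorem~\ref{thm:omega-bound-c} essentially a formal consequence, so there is no serious obstacle; the work has already been done in establishing the clasp number bound for the individual invariants $Y_n$.

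First I would observe that Theorem~\ref{thm:Y-genus-clasp}(2), evaluated at $n = c_4^+(K)$, gives
\[
Y_{c_4^+(K)}(K) \le \left\lceil \frac{c_4^+(K) - c_4^+(K)}{2} \right\rceil = 0,
\]
and since $Y_n \in \N$ by construction, this forces $Y_{c_4^+(K)}(K) = 0$. By definition of $\omega^+$ as the minimum such $n$, this yields
\[
\omega^+(K) \le c_4^+(K).
\]

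Next, I would apply the same reasoning to the mirror $-K$ to obtain $\omega^+(-K) \le c_4^+(-K)$. Using the identity $c_4^+(-K) = c_4^-(K)$ recorded in the introduction, this becomes
\[
\omega^+(-K) \le c_4^-(K).
\]

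Finally, adding the two inequalities and invoking the basic inequality $c_4^+(K) + c_4^-(K) \le c_4(K)$, we conclude
\[
\omega^+(K) + \omega^+(-K) \le c_4^+(K) + c_4^-(K) \le c_4(K),
\]
which is the desired bound. The entire content of the theorem is thus packaged into Theorem~\ref{thm:Y-genus-clasp}(2) together with the sign-refinement behavior of $c_4^\pm$ under mirroring; the ``hard part'' is really the proof of Theorem~\ref{thm:Y-genus-clasp}(2) itself, which is carried out in the body of the paper.
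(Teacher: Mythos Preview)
Your argument is correct and is exactly the approach the paper takes: the theorem is presented there as an immediate consequence of Theorem~\ref{thm:Y-genus-clasp}(2), the mirror identity $c_4^+(-K)=c_4^-(K)$, and the inequality $c_4^+(K)+c_4^-(K)\le c_4(K)$, with no further proof given.
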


Theorem~\ref{thm:omega-bound-c} should be compared to a result of Bodn\'{a}r, Celoria, and Golla \cite{BCGCobordisms}*{Proposition~1.5}, which states that
\begin{equation}
\nu^+(K)+\nu^+(-K)\le c_4(K) \label{eq:BCG-bound}
\end{equation}
for any knot $K\subset S^3$.
In Section~\ref{sec:examples}, we give some example computations where $\omega^+(K)$ is larger than $\nu^+(K)$. 

%
%

We also prove the following bound on the 4-dimensional clasp number:

\begin{thm} \label{thm:clasp-upsilon}
Let $K$ be a knot in $S^3$. Then
\begin{equation}
c_4(K) \ge \left(\max_{t\in [0,1]} \Upsilon_K(t)/t\right) -\left(\min_{t\in [0,1]} \Upsilon_K(t)/t\right). \label{eq:Upsilon-bound-intro}
\end{equation}
\end{thm}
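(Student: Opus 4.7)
The plan is to combine the asymmetric crossing-change inequality for $\Upsilon_K(t)$ with the reinterpretation of a normally immersed slice disk as a movie of embedded concordances punctuated by finitely many crossing changes.

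First I would fix a generic radial Morse function on $B^4$ and view a normally immersed disk $D$ for $K$ with $c_4^+(K)$ positive and $c_4^-(K)$ negative double points as a movie in $S^3$. Away from the double points $D$ is embedded and induces embedded concordances between the level-set knots; at each double point two strands of the level set cross transversely, producing a single crossing change whose sign is determined by the sign of the double point. Consequently, $K$ is related to the unknot $U$ by a finite sequence of embedded concordances and $c_4^+(K)+c_4^-(K)$ crossing changes, $c_4^+(K)$ of one sign and $c_4^-(K)$ of the other.

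Next I would invoke the crossing-change inequality of Ozsv\'ath--Stipsicz--Szab\'o for $\Upsilon$: if $K_+$ and $K_-$ differ by a single crossing change at a site where $K_+$ is positive and $K_-$ is negative, then
\[
0 \le \Upsilon_{K_-}(t) - \Upsilon_{K_+}(t) \le t
\]
for $t \in [0,1]$. Thus each crossing change along the movie alters $\Upsilon(t)$ by an amount lying in either $[0,t]$ or $[-t,0]$, depending on its sign, while the embedded concordances between crossing changes preserve $\Upsilon$. Since $\Upsilon_U=0$, summing the individual contributions along the movie yields, for some labeling $\{a,b\}=\{c_4^+(K),c_4^-(K)\}$,
\[
-a\cdot t \;\le\; \Upsilon_K(t) \;\le\; b \cdot t
\]
for all $t \in [0,1]$.

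Dividing by $t>0$ and extending by continuity to $t=0$ gives $\Upsilon_K(t)/t \in [-a,b]$ on $[0,1]$, hence
\[
\max_{t\in [0,1]} \Upsilon_K(t)/t \;-\; \min_{t\in [0,1]} \Upsilon_K(t)/t \;\le\; a + b \;=\; c_4^+(K)+c_4^-(K) \;\le\; c_4(K),
\]
which is the desired inequality. The main technical obstacle is the sign bookkeeping in the first step: the sign of a double point of $D$ is defined by local orientation data in $B^4$, and one must carefully verify that it corresponds to the sign of the induced crossing change in $S^3$ in the convention used for the Ozsv\'ath--Stipsicz--Szab\'o inequality. Once this identification is in place, the rest of the argument reduces to an elementary telescoping estimate, and the crossing-change inequality itself is available off the shelf.
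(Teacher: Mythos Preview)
Your argument is correct and reaches the conclusion, but by a genuinely different route from the paper. The paper's proof is purely algebraic: it combines the inequality $\Upsilon_K(t)\ge -t\,\nu^+(K)$ from \cite{OSSUpsilon}*{Proposition~4.7} (together with its mirror version) with the Bodn\'ar--Celoria--Golla bound $c_4(K)\ge \nu^+(K)+\nu^+(-K)$, never touching an immersed surface directly. Your approach instead unpacks the immersed disk as a movie and applies the Ozsv\'ath--Stipsicz--Szab\'o crossing-change inequality for $\Upsilon$, in effect reproving the relevant consequence of the BCG bound by hand. Your route is more geometric and more self-contained (it needs only the crossing-change inequality, not the $\nu^+$--$\Upsilon$ comparison or BCG), while the paper's route is shorter given those two off-the-shelf ingredients and also records the intermediate inequalities $\nu^+(K)\ge -\min_t \Upsilon_K(t)/t$ and $\nu^+(-K)\ge \max_t \Upsilon_K(t)/t$, which are of independent interest.

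Two small points to tighten in your write-up. First, you assume a single immersed disk with exactly $c_4^+(K)$ positive and $c_4^-(K)$ negative double points; such a disk need not exist, since these two minima are taken independently over all immersed disks. Taking instead a disk realizing $c_4(K)$, with $p$ positive and $n$ negative double points and $p+n=c_4(K)$, gives $a+b=c_4(K)$ directly and avoids the issue. Second, the intermediate level sets of a generic radial movie of an immersed disk may be links rather than knots, where $\Upsilon$ is not defined; the standard fix is to isotope the immersion so that all double points occur above every Morse critical point, so that the crossing changes carry $K$ through knots to a slice knot $K'$ with $\Upsilon_{K'}\equiv 0$.
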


Our proof of Theorem~\ref{thm:clasp-upsilon} uses  only the bound in~\eqref{eq:BCG-bound}, as well a general relation between $\nu^+(K)$ and the invariant $\Upsilon_K(t)$  of Ozsv\'{a}th, Stipsicz, and Szab\'{o} \cite{OSSUpsilon}.
As a topological application, we prove the following:

\begin{cor}\label{cor:unbounded-difference}
 The knot $K=T_{2,11}\#-T_{4,5}$ satisfies $g_4(\#^n K)= n$ and $c_4(\#^n K)\ge 2n$. In particular
 \[
\lim_{n\to \infty} \left( c_4(\#^n K)-g_4(\#^n K)\right)=\infty.
 \]
\end{cor}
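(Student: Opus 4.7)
My plan is to obtain $g_4(\#^n K) = n$ from the $\tau$-bound paired with an explicit genus one surface, and $c_4(\#^n K) \ge 2n$ from Theorem~\ref{thm:clasp-upsilon} applied to a direct computation of $\Upsilon_K$.

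For the slice genus, additivity of $\tau$ gives $\tau(K) = \tau(T_{2,11}) - \tau(T_{4,5}) = 5 - 6 = -1$, so $g_4(\#^n K) \ge |\tau(\#^n K)| = n$. For the matching upper bound, I would exhibit an explicit band move yielding a genus one cobordism from $T_{2,11}$ to $T_{4,5}$ (equivalently, a genus one surface in $B^4$ bounding $K$); taking $n$ parallel copies then gives $g_4(\#^n K) \le n$.

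For the clasp bound, since $\Upsilon$ is a concordance homomorphism, the quantity
\[
M(J) := \max_{t \in [0,1]} \Upsilon_J(t)/t - \min_{t \in [0,1]} \Upsilon_J(t)/t
\]
satisfies $M(\#^n K) = n M(K)$, so by Theorem~\ref{thm:clasp-upsilon} it suffices to prove $M(K) \ge 2$. The staircase of $T_{2,11}$ is $\cS_5$, with five knight moves of type $(1,1)$, giving $\Upsilon_{T_{2,11}}(t) = -5t$ on $[0,1]$. For $T_{4,5}$, the Alexander polynomial
\[
\Delta_{T_{4,5}}(t) = t^6 - t^5 + t^2 - 1 + t^{-2} - t^{-5} + t^{-6}
\]
gives a staircase with knight moves $(h_i,v_i) = (1,3), (2,2), (3,1)$, and the standard formula $\Upsilon(t) = -\sum_i \min(h_i t, v_i(2-t))$ yields
\[
\Upsilon_{T_{4,5}}(t) = \begin{cases} -6t, & t \in [0, 1/2], \\ -2 - 2t, & t \in [1/2, 1]. \end{cases}
\]
Subtracting, $\Upsilon_K(t) = t$ on $[0, 1/2]$ and $\Upsilon_K(t) = 2 - 3t$ on $[1/2, 1]$, so $\Upsilon_K(t)/t$ equals $1$ on $(0, 1/2]$ and decreases monotonically to $-1$ at $t = 1$. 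Thus $M(K) = 2$ and $c_4(\#^n K) \ge 2n$.

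Combining, $c_4(\#^n K) - g_4(\#^n K) \ge 2n - n = n \to \infty$. I expect the main obstacle to be the upper bound $g_4(K) \le 1$: the Heegaard Floer invariants used here yield only lower bounds on $g_4$, so this step genuinely requires a direct surface construction (e.g.\ a concrete single-band cobordism between $T_{2,11}$ and $T_{4,5}$).
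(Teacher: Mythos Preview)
Your approach is essentially the same as the paper's: both use $\tau$ for the lower bound on $g_4$, an external argument for the upper bound, and Theorem~\ref{thm:clasp-upsilon} applied to $\Upsilon_K$ for the clasp bound. Two small differences are worth noting. First, the paper does not compute $\Upsilon_K$ in full as you do; it only needs $\Upsilon_K(t)/t \to -\tau(K) = 1$ as $t\to 0$ (from \cite{OSSUpsilon}*{Proposition~1.6}) and $\Upsilon_K(1) = -1$, which already give $M(K)\ge 2$. Your explicit piecewise formula is correct and gives the same bound. Second, and more importantly, the step you flag as the main obstacle---the upper bound $g_4(K)\le 1$---is not handled in the paper by an explicit band move either; it is obtained by citing Livingston and Van~Cott \cite{LivingstonVanCott}*{Theorem~17}, who prove $g_4(T_{2,10r+1}\#-T_{4,4r+1})=r$. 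So rather than constructing a cobordism yourself, you can simply invoke that reference and the gap closes.
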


The genus computation in Corollary~\ref{cor:unbounded-difference} is due to Livingston and Van Cott \cite{LivingstonVanCott}.

Feller and Park \cite{FellerParkClasp} have informed us
that Corollary~\ref{cor:unbounded-difference} can also be obtained using Levine--Tristram
signatures, using the formula
\[
\max_{t\in [0,1]} \sigma_K(t) - \min_{t\in [0,1]} \sigma_K(t) \le 2c_4(K),
\]
where $\sigma_K$ is the signature function, and the maximum and minimum are taken in the complement of the set of jumps of the signature function.

In Section~\ref{sec:examples}, we give several examples where the invariants $Y_n(K)$ give better genus and clasp bounds than both the invariants $V_n$, and the signature function.

\subsection{An invariant from the $\scU\scV=0$ version of knot Floer homology}

We also obtain an invariant on the $\scU\scV=0$ version of knot Floer homology. We call our invariant $\omega (K)$. The invariant $\omega (K)$ takes values in $\{\tau(K),\tau(K)+1\}$, and is reminiscient of the invariant $\nu (K)$ defined by Ozsv\'{a}th and Szab\'{o} \cite{OSRationalSurgeries}*{Definition~9.1}, which also takes values in $\{\tau(K),\tau(K)+1\}$.

\begin{thm}
The invariants $\omega (K)$ and $\nu (K)$ satisfy
\[
\nu (K)\le \omega (K)\le g_4(K).
\]
\end{thm}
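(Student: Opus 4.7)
Both inequalities reduce to narrow case analyses because $\nu(K)$ and $\omega(K)$ each take only the two values $\tau(K)$ and $\tau(K)+1$. Since $\tau(K)\le g_4(K)$ is classical, the bound $\omega(K)\le g_4(K)$ is automatic except when $\tau(K)=g_4(K)$, where one must rule out $\omega(K)=\tau(K)+1$. Likewise, $\nu(K)\le\omega(K)$ is automatic when $\omega(K)=\tau(K)+1$, so the only case needing work is to deduce $\nu(K)=\tau(K)$ from $\omega(K)=\tau(K)$.

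For the upper bound $\omega(K)\le g_4(K)$, I would follow the cobordism strategy used for the $Y_n$ invariants discussed in Section~\ref{sec:bounds-and-Y}. Fix a smoothly and properly embedded genus $g:=g_4(K)$ surface $\Sigma\subset B^4$ bounded by $K$. Decorating $\Sigma$ appropriately and applying the link cobordism TQFT of \cite{JCob,ZemCFLTQFT}, one obtains a grading preserving chain map from the $\scU\scV=0$ knot Floer complex of the unknot into $\cCFK^{\scU\scV=0}(K)$. The image of the top generator of the unknot complex is a distinguished cycle of $\cCFK^{\scU\scV=0}(K)$; tracking its Maslov and Alexander bigradings and comparing with the definition of $\omega(K)$ forces $\omega(K)=\tau(K)$ in the case $\tau(K)=g_4(K)$, as required. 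This is formally parallel to the bigrading argument behind Theorem~\ref{thm:Y-genus-clasp}, specialised to the $\scU\scV=0$ target.

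For the lower bound $\nu(K)\le\omega(K)$, I would exploit the natural algebraic relationship between the $\scU\scV=0$ complex and the simpler complex computing $\nu(K)$ in \cite{OSRationalSurgeries}, which should be recovered as a subquotient of $\cCFK^{\scU\scV=0}(K)$ by collapsing one of the two variables. Under this reduction, the distinguished cycle certifying $\omega(K)=\tau(K)$ induces a cycle satisfying the nontriviality condition defining $\nu(K)=\tau(K)$. The main obstacle is precisely this algebraic comparison: one must verify that the cycle witnessing $\omega(K)=\tau(K)$ remains nonzero after reduction and matches Ozsv\'{a}th--Szab\'{o}'s characterisation of $\nu(K)=\tau(K)$. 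Once the definition of $\omega$ is unpacked, this amounts to bookkeeping with the $\scU,\scV$-action and the Alexander filtration, but it is where the genuine content of $\nu\le\omega$ resides.
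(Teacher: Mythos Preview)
Your case analysis via $\omega(K),\nu(K)\in\{\tau(K),\tau(K)+1\}$ is an unnecessary detour, and more importantly your sketch of the upper bound has a genuine gap. You propose obtaining ``a distinguished cycle'' of $\widehat{\cC}_K$ from a single decorated cobordism map and then ``tracking its bigrading'' to force $\omega(K)=\tau(K)$. But by definition, certifying $\omega(K)\le g$ means producing a grading-preserving $\widehat{\cR}$-equivariant chain map $\widehat{\cR}\to\widehat{\cC}_K\otimes\widehat{\cS}_g^\vee$ that is simultaneously $\bF[\scU]$-local and $\bF[\scV]$-local. Unwinding this, one needs a whole family of cycles $z_{-g},z_{-g+2},\dots,z_g$ in $\widehat{\cC}_K$ (in the appropriate bigradings) together with chains witnessing $\scU[z_{i+1}]=\scV[z_{i-1}]$. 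A single cobordism map, for a single decoration, produces only one of the $z_i$; it cannot by itself supply the staircase relations or the locality in both variables.

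The paper's argument is direct and avoids all case analysis. For $\omega(K)\le g_4(K)$: a genus-$g$ surface, decorated in $2g+1$ different ways and combined via Proposition~\ref{prop:main-claim-dividing-set-ind}, yields a grading-preserving local map $\cS_g\to\cCFK^-(K)$ (this is exactly the content of the proof of Theorem~\ref{thm:Y-genus-clasp}); tensoring with $\id_{\cS_g^\vee}$ and precomposing with the cotrace $\cR\to\cS_g\otimes\cS_g^\vee$ gives the required map, which one then reduces mod $\scU\scV$. For $\nu(K)\le\omega(K)$: given the witness map $\widehat{\cR}\to\widehat{\cC}_K\otimes\widehat{\cS}_n^\vee$, compose with the $\bF[\scV]$-local projection $\widehat{\cS}_n^\vee\to\widehat{\cR}$ onto $x_{-n}$; the image of $1$ is then an $\bF[\scV]$-non-torsion class in $\widehat{\scA}_n$, giving $\nu(K)\le n$. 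Your idea of ``reducing one of the variables'' for the lower bound is in the right spirit, but the concrete step is this projection onto the extremal staircase generator, not a quotient of $\widehat{\cC}_K$ itself.
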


In Section~\ref{sec:examples}, we will give an example where $\nu (K)+1=\omega (K)$.
The advantage of using $\omega $ over $\omega^+$ is that the former can be computed
algorithmically from the $\scU\scV=0$ complex determined by the software of Ozsv\'ath and Szab\'o \cite{SzaboProgram}, which is based on the bordered knot invariant \cite{OSBorderedKauffmanStates}.

\subsection{Involutive correction terms}

We additionally consider the involutive correction terms $\bar{V}_0(K)$ and $\underline{V}_0(K)$
of large surgery due to Hendricks and Manolescu \cite{HMInvolutive}, which satisfy
\[
\bar{V}_0(K)\le V_0(K)\le \ul{V}_0(K).
\]
 The involutive correction terms of large surgery are well known not to give a lower bound on the slice genus in the same manner as $V_0$ in equation~\eqref{eq:rasmussen-bound}. For example, $T_{2,3}\# T_{2,3}$ has slice genus 2, but $\underline{V}_0=2$; see \cite{HMInvolutive}*{Theorem~1.7}. In contrast, our techniques imply the following:

\begin{thm}\label{thm:involutive} Suppose $K$ is a knot in $S^3$.
\begin{enumerate}
\item Then
\[
-\left\lceil \frac{g_4(K)+1}{2}\right\rceil \le \bar{V}_0(K)\le \ul{V}_0(K)\le \left \lceil \frac{g_4(K)+1}{2} \right \rceil,
\]
\item and
\[
-\left\lceil \frac{c_4^-(K)+1}{2}\right\rceil \le \bar{V}_0(K)\le \ul{V}_0(K)\le \left \lceil \frac{c_4^+(K)+1}{2} \right \rceil.
\]
\end{enumerate}
\end{thm}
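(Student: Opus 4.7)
The plan is to deploy the link-cobordism-with-decorations machinery developed in Section~\ref{sec:bounds-and-Y} for the $Y_n$ bounds, but now to also track the behaviour of the cobordism-induced elements under the conjugation involution $\iota_K$ of Hendricks--Manolescu on $\cCFK^-(K)$. The middle inequality $\bar{V}_0(K)\le \underline{V}_0(K)$ is immediate from the construction of the involutive correction terms, so the task is to establish the outer two inequalities in each part.

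For part (1), let $\Sigma\subset B^4$ be a connected oriented slice surface for $K$ of minimal genus $g=g_4(K)$, pushed into $S^3\times[0,1]$ to obtain a decorated link cobordism from the unknot to $K$. For each dividing set $\sigma$ on $\Sigma$, the link cobordism TQFT produces a chain map $F_{\Sigma,\sigma}$ on knot Floer complexes whose bigrading shift is determined explicitly by the topology of $\Sigma$ and the combinatorics of $\sigma$. The key technical input, and the one driving the $Y_n$ bounds, is that $\iota_K$ intertwines $F_{\Sigma,\sigma}$ and $F_{\Sigma,\bar\sigma}$, where $\bar\sigma$ denotes the decoration obtained by swapping the two colours of region. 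Applied to the distinguished cycle in the unknot complex, this produces an element $x\in \cCFK^-(K)$ together with an explicit null-homology of $(1+\iota_K)x$ lying in a controlled bigrading range. Feeding $x$ and this null-homology into the mapping-cone description of the involutive Heegaard Floer homology of large surgeries from \cite{HMInvolutive} yields cycles in the involutive large-surgery complex whose Maslov gradings directly bound $\underline{V}_0(K)$ from above and $\bar{V}_0(K)$ from below by $\lceil (g+1)/2\rceil$; the lower bound is obtained either by a symmetric construction or by invoking the standard mirror duality between $\bar{V}_0$ and $\underline{V}_0$.

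For part (2), the same argument runs through after replacing $\Sigma$ by a normally immersed disk $\Delta\subset B^4$ realising $c_4^\pm(K)$. Blowing up $B^4$ at each singularity resolves $\Delta$ to an embedded genus-zero surface inside a boundary sum of copies of $\CP^2$ and $-\CP^2$, with the sign of each blow-up determined by the sign of the corresponding double point. The contribution of positive versus negative blow-ups to the grading shifts in the involutive link cobordism formalism is asymmetric: positive double points feed into the upper bound on $\underline{V}_0(K)$, while negative double points feed into the lower bound on $\bar{V}_0(K)$. This yields the two stated asymmetric bounds, with the identity $c_4^+(K)=c_4^-(-K)$ providing a consistency check via the mirror.

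The main obstacle is sharpening the grading estimate so that the null-homology of $(1+\iota_K)x$ lives in exactly the range producing the constant $\lceil (g+1)/2\rceil$, rather than the weaker $g+1$ that a naive bound would give. Morally, this is the source of the extra $+1$ relative to Rasmussen's non-involutive bound $V_n(K)\le \lceil (g_4(K)-n)/2\rceil$: one must combine the decoration $\sigma$ with its conjugate $\bar\sigma$, and the shortest null-homology between the two corresponding cobordism elements generically lives one grading above the elements themselves, which translates into exactly one extra half-step in the final estimate. Once this sharp grading statement is in hand, both parts of the theorem follow by unpacking the definitions of $\bar{V}_0$ and $\underline{V}_0$ in terms of the involutive large-surgery complex.
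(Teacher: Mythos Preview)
Your proposal has the right general shape---use the conjugation relation $\iota_K\circ F_{\Sigma,\sigma}\simeq F_{\Sigma,\bar\sigma}\circ\iota_0$ and feed the resulting data into the involutive mapping cone---but it is missing the key technical step, and your account of both the $+1$ and the clasp case is incorrect.

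The intertwining relation by itself only gives you $\iota_K(x)\simeq F_{\Sigma,\bar\sigma}(1)$, so $(1+\iota_K)x\simeq F_{\Sigma,\sigma}(1)+F_{\Sigma,\bar\sigma}(1)$. To get a null-homology of this you must know that $F_{\Sigma,\sigma}\simeq F_{\Sigma,\bar\sigma}$ when the two subregions have equal Euler characteristic. This is exactly Proposition~\ref{prop:main-claim-dividing-set-ind}, and it only holds after a stabilization of the surface. The paper therefore first arranges $g(S)$ odd, then stabilizes once more to an even-genus surface $S'$; the map $F_{B^4,\cF'}$ for a balanced decoration on $S'$ has bigrading shift $(-g(S)-1,-g(S)-1)$, and \emph{this} extra stabilization is the source of the $+1$. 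Your explanation---that the null-homology lives one grading above the elements---is not how the argument works: the homotopy $H$ has the same bigrading shift as $F_{B^4,\cF'}$, and the ceiling appears only because one arranges $g(S)$ odd at the outset.

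For part~(2), your proposal to blow up at every double point is not what is done and would not work: positive double points are \emph{smoothed} (each contributing $1$ to the genus of the resolved surface), while negative double points are blown up in $\bar{\CP}^2$ to obtain a null-homologous embedded surface in $B^4\# n\bar{\CP}^2$. Blowing up a positive double point would require a $\CP^2$ summand, and the resulting cobordism maps do not have the required non-vanishing and grading properties. Moreover, once you work in $B^4\# n\bar{\CP}^2$, the maximal-square $\Spin^c$ structure $\frs$ is not self-conjugate, so the conjugation relation reads $\iota_K\circ F_{\cF',\frs}\simeq F_{\bar\cF',\bar\frs}\circ\iota_0$ and you need an additional step---the Hopf link computation of Lemma~\ref{lem:Hopf-CP2}, combined with the stabilization---to pass from $\bar\frs$ back to $\frs$. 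Your proposal does not address this.
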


The knot considered in Section~\ref{sec:computation} is an example where the bounds from $\underline{V}_0(K)$ and $\overline{V}_0(K)$ are stronger than those from equation~\eqref{eq:rasmussen-bound}. See Remark~\ref{rem:more-on-example-knot}.

\subsection{Acknowledgments}

The authors would like to thank Maciej Borodzik, Kristen Hendricks, Jennifer Hom, and Beibei Liu for helpful conversations. We thank Peter Feller, Marco Golla, Maggie Miller, and JungHwan Park for helpful comments on an earlier version of our paper. We also thank Ciprian Manolescu, who wrote (long ago) portions of the code for computing the involutive invariants.

This project has received funding from the European Research Council (ERC)
under the European Union's Horizon 2020 research and innovation programme
(grant agreement No 674978).
The second author was supported by an NSF Postdoctoral Research Fellowship (DMS-1703685).

\section{$\Upsilon_K(t)$ and the clasp number}
\label{sec:example-1}
In this section, we give a family of knots where $c_4-g_4$ increases without bound. Our proof is an application of the following general result:

\begin{thm}\label{thm:Upsilon-bound} Suppose $K$ is a knot in $S^3$. Then
\[
c_4(K)\ge \left(\max_{t\in [0,1]} \Upsilon_K(t)/t\right) -\left(\min_{t\in [0,1]} \Upsilon_K(t)/t\right).
\]
\end{thm}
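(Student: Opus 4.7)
The plan is to combine two ingredients. The first is the Bodn\'{a}r--Celoria--Golla bound $\nu^+(K)+\nu^+(-K)\le c_4(K)$ from \eqref{eq:BCG-bound}, which is taken as given. The second is a pointwise comparison between $\nu^+$ and $\Upsilon$, namely
\[
\nu^+(K) \ge -\Upsilon_K(t)/t \quad \text{for every } t \in (0,1].
\]
Both invariants can be read off from $\CFK^\infty(K)$: the integer $\nu^+(K)=\min\{s\in\N:V_s(K)=0\}$ records the smallest shift needed to detect a $U$-nontorsion class in the relevant subcomplex, while $\Upsilon_K(t)$ is a minimax over generators weighted by a linear functional interpolating between the $i$-- and $j$--filtrations. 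The pointwise estimate is then a direct consequence of comparing these two minimax descriptions.

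Given this pointwise inequality, the theorem is immediate. Taking the supremum over $t\in(0,1]$, and interpreting $\Upsilon_K(t)/t$ at $t=0$ via the limit $-\tau(K)$, gives
\[
\nu^+(K) \ge -\min_{t\in [0,1]} \Upsilon_K(t)/t.
\]
Applying the same inequality to the mirror $-K$ and using $\Upsilon_{-K}(t)=-\Upsilon_K(t)$ yields
\[
\nu^+(-K) \ge \max_{t\in [0,1]} \Upsilon_K(t)/t.
\]
Summing these two estimates and invoking \eqref{eq:BCG-bound} gives the desired bound
\[
c_4(K) \ge \nu^+(K)+\nu^+(-K) \ge \max_{t\in[0,1]} \Upsilon_K(t)/t - \min_{t\in[0,1]} \Upsilon_K(t)/t.
\]

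The main obstacle is establishing the pointwise inequality $\nu^+(K) \ge -\Upsilon_K(t)/t$ in exactly the form needed. If it is not already available in the literature, I would derive it as follows. For $s < \nu^+(K)$, one has $V_s(K)>0$, which says that no class in a certain subcomplex of $\CFK^\infty(K)$ survives to a $U$-nontorsion class. Combined with the weighted minimax description of $\Upsilon$, this forces $\Upsilon_K(t) \ge -s\cdot t$ for every $s \ge \nu^+(K)$ and $t\in(0,1]$, which rearranges to $-\Upsilon_K(t)/t \le \nu^+(K)$. A useful sanity check is the $t\to 0^+$ limit, which recovers the familiar inequality $\nu^+(K)\ge \tau(K)$.
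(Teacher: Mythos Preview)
Your proposal is correct and follows essentially the same route as the paper: both combine the Bodn\'{a}r--Celoria--Golla bound with the pointwise inequality $-t\,\nu^+(K)\le \Upsilon_K(t)$ for $t\in[0,1]$, then mirror. The paper simply cites this inequality from \cite{OSSUpsilon}*{Proposition~4.7} rather than sketching a derivation; your hedging about whether it is in the literature is unnecessary, and your sketch of how to prove it is a little garbled (you start with $s<\nu^+(K)$ and conclude something about $s\ge\nu^+(K)$), but the main argument is sound.
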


\begin{proof} According to \cite{OSSUpsilon}*{Proposition~4.7}, we have
\[
-t \nu^+(K)\le \Upsilon_K(t)
\]
for $t\in [0,1]$; cf.~\cite{OSSUpsilon}*{Proposition~2.13}. Hence
\begin{equation}
\nu^+(K)\ge \max_{t\in [0,1]} -\Upsilon_K(t)/t=-\min_{t\in [0,1]} \Upsilon_K(t)/t.
\end{equation}
Mirroring $K$, we obtain similarly
\begin{equation}
\nu^+(-K)\ge \max_{t\in [0,1]} \Upsilon_K(t)/t.
\end{equation}
Using Bodn\'{a}r, Celoria, and Golla's bound in ~\eqref{eq:BCG-bound}, we obtain
\[
c_4(K)\ge \nu^+(K)+\nu^+(-K)\ge \max_{t\in [0,1]} \Upsilon_K(t)/t-\min_{t\in [0,1]} \Upsilon_K(t)/t,
\]
completing the proof.
\end{proof}

As a corollary, we prove the following, which is stated as Corollary ~\ref{cor:unbounded-difference} in the introduction:
\begin{cor} Let $K_r=T_{2,10r+1}\#-T_{4,4r+1}$ for $r\ge 1$.
Then
\[
 g_4(\#^n K_r)=nr\quad \text{and}\quad c_4(\#^n K_r)\ge 2nr.
\]
\end{cor}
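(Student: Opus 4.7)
The plan is to apply Theorem~\ref{thm:Upsilon-bound} to $\#^n K_r$ after exploiting the additivity $\Upsilon_{K\# L} = \Upsilon_K + \Upsilon_L$. Since $\Upsilon_{\#^n K_r}(t)/t = n\cdot\Upsilon_{K_r}(t)/t$, Theorem~\ref{thm:Upsilon-bound} reduces the claimed clasp bound $c_4(\#^n K_r)\ge 2nr$ to the single-summand inequality
\[
\max_{t\in[0,1]} \Upsilon_{K_r}(t)/t \;-\; \min_{t\in[0,1]} \Upsilon_{K_r}(t)/t \;\geq\; 2r.
\]
The genus equality $g_4(\#^n K_r) = nr$ is imported directly from Livingston--Van Cott~\cite{LivingstonVanCott}, so I focus on the clasp bound.

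To extract the max and min, I evaluate $\Upsilon_{K_r}(t)/t$ at the two endpoints of $[0,1]$. Near $t = 0$, the identity $\Upsilon_K'(0^+) = -\tau(K)$ together with $\tau(T_{p,q}) = (p-1)(q-1)/2$ and additivity of $\tau$ yields $\tau(K_r) = 5r - 6r = -r$, hence
\[
\lim_{t\to 0^+} \Upsilon_{K_r}(t)/t \;=\; -\tau(K_r) \;=\; r,
\]
giving $\max_t \Upsilon_{K_r}(t)/t \ge r$. At $t=1$ I use the standard formula $\Upsilon_{T_{2,2k+1}}(t) = -kt$ on $[0,1]$ (so $\Upsilon_{T_{2,10r+1}}(1) = -5r$), combined with the claim
\[
\Upsilon_{T_{4,4r+1}}(1) \;=\; -4r. \tag{$\star$}
\]
Additivity of $\Upsilon$ then gives $\Upsilon_{K_r}(1) = -5r - (-4r) = -r$, so $\min_t \Upsilon_{K_r}(t)/t \leq -r$. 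Combining, the max--min difference on $[0,1]$ is at least $r - (-r) = 2r$, as required.

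The technical core, which is the main obstacle, is identity~$(\star)$. I would prove it by a direct computation of the L-space staircase complex of $T_{4,4r+1}$, using its Alexander polynomial
\[
\Delta_{T_{4,4r+1}}(t) \;=\; \frac{(t^{4(4r+1)}-1)(t-1)}{(t^4-1)(t^{4r+1}-1)}.
\]
The symmetrized polynomial has $6r+1$ nonzero coefficients, whose Alexander gradings have successive differences forming the pattern $(-1,-3)^{r},(-2)^{2r},(-3,-1)^{r}$. Tracing out the corresponding staircase, the ``inner corners'' (the positions of the $3r+1$ even-indexed generators) are the lattice points
\[
\{(k,6r-3k):0\le k\le r\}\cup\{(r+2j,3r-2j):0\le j\le r\}\cup\{(3r+3\ell,r-\ell):0\le \ell\le r\},
\]
and one checks $a+b\ge 4r$ at each, with equality on the middle block. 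For an L-space knot with inner corners $\{(a_i,b_i)\}$, one has the well-known formula $\Upsilon_K(t) = -2\min_i\left[(1-t/2)a_i + (t/2)b_i\right]$; specializing to $t=1$ gives $\Upsilon_K(1) = -\min_i(a_i+b_i)$, and identity~$(\star)$ follows. The pattern of the Alexander polynomial is uniform in $r$ and can be verified by an induction on $r$, so the bookkeeping, while the main work, is routine.
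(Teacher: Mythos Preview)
Your approach is essentially the same as the paper's: both apply Theorem~\ref{thm:Upsilon-bound} after using additivity of $\Upsilon$, evaluate $\Upsilon_{K_r}(t)/t$ at $t\to 0^+$ via $\tau(K_r)=-r$ and at $t=1$ via $\Upsilon_{K_r}(1)=-r$, and conclude the max--min gap is at least $2r$. The paper simply cites \cite{OSSUpsilon}*{Theorem~6.2} (through \cite{LivingstonVanCott}) for the values $\tau(K_r)=-r$ and $\Upsilon_{K_r}(1)=-r$, whereas you rederive them, in particular working out $\Upsilon_{T_{4,4r+1}}(1)=-4r$ from the staircase; this is correct but unnecessary given the existing literature.

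One small point: Livingston--Van Cott prove $g_4(K_r)=r$, which gives only the upper bound $g_4(\#^n K_r)\le nr$. The matching lower bound is not literally in their statement; the paper obtains it from $|\tau(\#^n K_r)|=nr$. You have already computed $\tau(K_r)=-r$, so you have this ingredient---just make the deduction explicit rather than asserting the full equality is ``imported directly.''
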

\begin{proof}
 Livingston and Van Cott \cite{LivingstonVanCott}*{Theorem~17} proved that the knot $K_r$ has slice genus $r$, and hence 
 \begin{equation}
 g_4(\#^n K_r)\le nr. \label{eq:genus-bound}
 \end{equation}
On the other hand, they compute using \cite{OSSUpsilon}*{Theorem~6.2} that
 \begin{equation}
\tau(K_r)=-r\quad \text{and} \quad \Upsilon_{K_r}(1)=-r. \label{eq:tau-upsilon-Kr}
 \end{equation}
In particular $\tau(\#^n K_r)=-rn$, so in light of~\eqref{eq:genus-bound}, we conclude $g_4(\#^n K_r)=rn$. Since $\Upsilon_{K_r}(t)=-\tau\cdot t$ near $t=0$ \cite{OSSUpsilon}*{Proposition~1.6}, we know that
\[
\max_{t\in [0,1]} \Upsilon_{\#^nK_r}(t)/t=n\max_{t\in [0,1]} \Upsilon_{K_r}(t)/t\ge nr,
\]
and~\eqref{eq:tau-upsilon-Kr} implies that
 \[
\min_{t\in [0,1]} \Upsilon_{\#^n K_r}(t)/t=n\min_{t\in [0,1]} \Upsilon_{K_r}(t)/t\le -nr.
\]
Applying Theorem~\ref{thm:Upsilon-bound}, the proof is complete.
\end{proof}

\begin{rem}
 The proof also shows that $\nu^+(\#^n K_{r})=\nu^+(-\#^n K_{r})=nr$. The bounds on $\Upsilon$ imply that $\nu^+(\#^n K_r)$ and $\nu^+(-\#^n K_r)$ are both at least $nr$, while the fact that $g_4(\#^n K_r)=nr$ implies that they are at most $nr$, since $\nu^+(K)\le g_4(K)$, for any knot $K$.
\end{rem}

The graphs of $\Upsilon_{K_1}(t)$ and $\Upsilon_{K_1}(t)/t$ are shown in Figure~\ref{fig:Upsilon}.

 \begin{figure}[H]
	\centering
	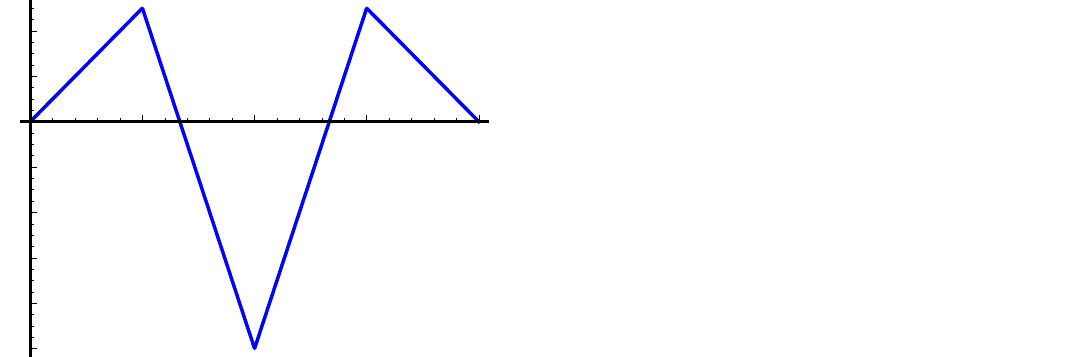
	\caption{The graphs of $\Upsilon_{K_1}(t)$ (left) and $\Upsilon_{K_1}(t)/t$ (right). Here $K_1=T_{2,11}\#-T_{4,5}$.}\label{fig:Upsilon}
\end{figure}

\section{Background on the link Floer TQFT}

In this section, we recall some background about knot Floer homology and the link Floer TQFT.

\subsection{Knot Floer homology}

Let $\bF[\scU,\scV]$ denote a 2-variable polynomial ring. Suppose that $\bK=(K,w,z)$ is a doubly pointed knot in a closed, oriented 3-manifold $Y$. If $\cH=(\Sigma,\as,\bs,w,z)$ is a Heegaard diagram of $(Y,\bK)$ and $\frs\in \Spin^c(Y)$, we let $\cCFK^-(Y,\bK,\frs)$ be the free $\bF[\scU,\scV]$-module generated by intersection points $\xs\in \bT_\a\cap \bT_{\b}$ satisfying $\frs_w(\xs)=\frs$. Here, 
\[
\bT_{\a}:=\a_1\times \dots\times \a_g \quad \text{and} \quad \bT_{\b}:=\b_1\times \cdots \times \b_g
\]
are two half-dimensional tori in the $g$-fold symmetric product $\Sym^g(\Sigma)$. We define a differential, which counts pseudo-holomorphic disks, via the formula
\[
\d\xs:=\sum_{\substack{\phi\in \pi_2(\xs,\ys) \\ \mu(\phi)=1}}\# (\cM(\phi)/\R) \scU^{n_w(\phi)} \scV^{n_z(\phi)}\cdot \ys,
\]
extended equivariantly over the action of $\bF[\scU,\scV]$. Here, $\cM(\phi)$ denotes the moduli space of pseudo-holomorphic disks in $\Sym^g(\Sigma)$, with boundary on $\bT_{\a}$ and $\bT_{\b}$, representing the class $\phi$. Also, $n_w(\phi)$ and $n_z(\phi)$ denote the multiplicities of the class $\phi$ on $w$ and $z$, respectively.

 There is also an infinity version $\cCFK^\infty(Y,\bK,\frs)$, which has the same generators, but is defined over the ring $\bF[\scU,\scV,\scU^{-1},\scV^{-1}]$. When it causes no confusion, we will write $\cCFK^-(K)$ instead of $\cCFK^-(Y,\bK,\frs)$. We write $\cHFK^-(K)$ for the homology.

Knot Floer homology admits a rich grading structure, which can be formulated in several ways. We will focus on the description which is considered in \cite{ZemAbsoluteGradings}, since it is the most natural from the point of view of link cobordism maps. If $\frs$ is torsion and $K$ is null-homologous, then $\cCFK^-(Y,\bK,\frs)$ admits two Maslov gradings $\gr_{\ws}$ and $\gr_{\zs}$. The variables $\scU$ and $\scV$ have $(\gr_{\ws},\gr_{\zs})$-bigradings $(-2,0)$ and $(0,-2)$, respectively. The Alexander grading $A$ satisfies
\begin{equation}
A=\frac{1}{2}(\gr_{\ws}-\gr_{\zs}). \label{eq:Alexander-grading-def}
\end{equation}
It is convenient also to consider the $\delta$-grading:
\[
\delta:=\frac{1}{2}(\gr_{\ws}+\gr_{\zs}).
\]

If $K$ is a knot in $S^3$, we  write $\scA_s(K)$  for the subset of $\cCFK^-(K)$ in Alexander grading $s$. The complex $\scA_s(K)$ is a module over the polynomial ring $\bF[U]$, where $U=\scU\scV$. 

There is a more common formulation of the full knot Floer complex, denoted $\CFK^\infty(K)$, which has generators $[\xs,i,j]$ such that $A(\xs)-j+i=0$. This is isomorphic to the subcomplex of $\cCFK^\infty(K)$ which lies in Alexander grading 0, via the map $\scU^i\scV^j \cdot \xs\mapsto [\xs,-i,-j]$. The complex $\scA_s(K)$ is also canonically isomorphic to the more standard large surgery complex $A_s(K)$, which is generated by $[\xs,i,j]$ satisfying $A(\xs)+i-j=0$ and $j\le s$, via the map $\scU^i \scV^j \cdot \xs\mapsto [\xs,-i,-j+s]$. As a $\delta$-graded complex, we have
\begin{equation}
\scA_s(K)\iso A_s(K)[0,-2s], \label{eq:A-isomorphism-with-shifts}
\end{equation}
where $[0,-2s]$ denotes shifting each bigrading by $[0,-2s]$.

It is easy to see that $\CFK^\infty(K)$ and $\cCFK^-(K)$ contain equivalent information. Our reason for using $\cCFK^-(K)$ is that it is a more natural packaging from the perspective of the cobordism maps.

More generally, if $\bL=(L,\ws,\zs)$ is an oriented, multi-pointed link in $Y$ (cf.~Definition~\ref{def:category}), the above construction adapts to give a link Floer complex over the ring $\bF[\scU,\scV]$. We denote this complex as $\cCFL^-(Y,\bL,\frs)$. This is a variation on the original link Floer complex defined by Ozsv\'{a}th and Szab\'{o} \cite{OSLinks}.

\subsection{Link Floer homology as a TQFT}
\label{subsec:TQFT}
Link Floer homology has a functorial framework which refines the original cobordism invariants of Ozsv\'{a}th and Szab\'{o} \cite{OSTriangles}. We recall the relevant cobordism category from \cite{JCob}:

\begin{define} \label{def:category}
\begin{enumerate} 
\item The objects are pairs $(Y,\bL)$, where $Y$ is a closed, oriented 3-manifold, and $\bL=(L,\ws,\zs)$ is an oriented link with two collections of basepoints, which alternate as one traverses the link. Furthermore, each component of $Y$ contains at least one component of $L$, and each component of $L$ contains at least two basepoints.
\item A morphism from $(Y_1,\bL_1)$ to $(Y_2,\bL_2)$ consists of a pair $(W,\cF)$, as follows: $W$ is a compact, oriented 4-manifold such that $\d W=-Y_1\sqcup Y_2$. Furthermore, $\cF$ consists of a properly embedded, oriented surface $S$ such that $\d S=-L_1\cup L_2$, which is decorated by a properly embedded 1-manifold $\cD\subset S$, such that $S\setminus \cD$ consists of two subsurfaces, $\cF_{\ws}$ and $\cF_{\zs}$, which meet along $\cD$. Furthermore, $\cD$ is disjoint from the basepoints, each component of $L_i\setminus \cD$ contains exactly one basepoint, and $\ws_i\subset \cF_{\ws}$ and $\zs_i\subset \cF_{\zs}$, for $i\in \{1,2\}$.
\end{enumerate}
\end{define}

\begin{rem}
In figures, we follow the convention that the $\ws$-subregion is shaded, and the $\ws$-basepoints are solid dots. The $\zs$-subregion is unshaded, and the $\zs$-basepoints are open dots.
\end{rem}

The first author associated functorial cobordism maps for decorated link cobordisms on the hat version of link Floer homology \cite{JCob} (the version obtained by setting all variables to zero). The construction used the contact gluing map of Honda, Kazez, and Mati\'{c} \cite{HKMTQFT}. The second author extended this to the full minus version, using maps associated to elementary cobordisms \cite{ZemCFLTQFT} (compare also \cite{AE}). The equivalence of the maps in \cite{JCob} and \cite{ZemCFLTQFT} is proven in \cite{JuhaszZemkeContactHandles}.

Following \cite{ZemCFLTQFT}, if $(W,\cF)$ is a decorated link cobordism from $(Y_1,\bL_1)$ to $(Y_2,\bL_2)$ and $\frs\in \Spin^c(W)$ is a $\Spin^c$ structure, there is an induced chain map
\[
F_{W,\cF,\frs}\colon \cCFL^-(Y_1,\bL_1,\frs|_{Y_1})\to \cCFL^-(Y_2,\bL_2,\frs|_{Y_2}),
\]
which is equivariant with respect to the action of $\bF[\scU,\scV]$. The grading changes of the cobordism maps are computed in \cite{ZemAbsoluteGradings}*{Theorem~1.4}. If $\frs|_{Y_1}$ and $\frs|_{Y_2}$ are torsion, then
\begin{equation}
\gr_{\ws}(F_{W,\cF,\frs}(\xs))-\gr_{\ws}(\xs)=\frac{c_1(\frs)^2-2\chi(W)-3\sigma(W)}{4}+\tilde{\chi}(\cF_{\ws}),
\label{eq:gr-w-grading}
\end{equation}
where $\tilde{\chi}(\cF_{\ws}):=\chi(\cF_{\ws})-\frac{1}{2}(|\ws_1|+|\ws_2|)$. Similarly, if $\frs|_{Y_1}-\PD[L_1]$ and $\frs|_{Y_2}-\PD[L_2]$ are torsion, then
\begin{equation}
\gr_{\zs}(F_{W,\cF,\frs}(\xs))-\gr_{\zs}(\xs)=\frac{c_1(\frs-\PD[\Sigma])^2-2\chi(W)-3\sigma(W)}{4}+\tilde{\chi}(\cF_{\zs}).
\label{eq:gr-z-grading}
\end{equation}
Similar to~\eqref{eq:Alexander-grading-def}, if $L_1$ and $L_2$ are null-homologous, and $\frs$ has torsion restriction to $Y_1$ and $Y_2$, then
\begin{equation}
A(F_{W,\cF,\frs}(\xs))-A(\xs)=\frac{\langle c_1(\frs), [\hat{S}]\rangle -[\hat{S}]\cdot [\hat{S}]}{2}+\frac{\chi(\cF_{\ws})-\chi(\cF_{\zs})}{2},
\label{eq:Alexander-grading}
\end{equation}
where $\hat{S}$ is obtained by capping $S$ with Seifert surfaces.

\subsection{A few helpful properties of the TQFT}

In this section, we recall several basic properties of the link Floer TQFT which we will use in this paper.

We first recall the \emph{bypass relation}, which is inspired by contact geometry (cf. \cite{HKMTQFT}*{Lemma~7.4}). If $\cF_1$, $\cF_2$, and $\cF_3$ are decorations of a fixed link cobordism $S\subset W$, which coincide outside of a disk $D$, and inside $D$ the dividing sets have the form shown in Figure~\ref{fig:15}, then
\begin{equation}
F_{W,\cF_1,\frs}+F_{W,\cF_2,\frs}+F_{W,\cF_3,\frs}\simeq 0.
\end{equation}
See \cite{ZemConnectedSums}*{Lemma~1.4}.

 \begin{figure}[H]
	\centering
\begingroup%
  \makeatletter%
  \providecommand\color[2][]{%
    \errmessage{(Inkscape) Color is used for the text in Inkscape, but the package 'color.sty' is not loaded}%
    \renewcommand\color[2][]{}%
  }%
  \providecommand\transparent[1]{%
    \errmessage{(Inkscape) Transparency is used (non-zero) for the text in Inkscape, but the package 'transparent.sty' is not loaded}%
    \renewcommand\transparent[1]{}%
  }%
  \providecommand\rotatebox[2]{#2}%
  \newcommand*\fsize{\dimexpr\f@size pt\relax}%
  \newcommand*\lineheight[1]{\fontsize{\fsize}{#1\fsize}\selectfont}%
  \ifx\svgwidth\undefined%
    \setlength{\unitlength}{207.12273705bp}%
    \ifx\svgscale\undefined%
      \relax%
    \else%
      \setlength{\unitlength}{\unitlength * \real{\svgscale}}%
    \fi%
  \else%
    \setlength{\unitlength}{\svgwidth}%
  \fi%
  \global\let\svgwidth\undefined%
  \global\let\svgscale\undefined%
  \makeatother%
  \begin{picture}(1,0.28915228)%
    \lineheight{1}%
    \setlength\tabcolsep{0pt}%
    \put(0,0){\includegraphics[width=\unitlength,page=1]{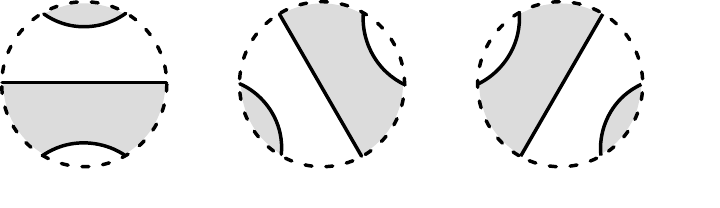}}%
    \put(0.11714968,0.00577573){\color[rgb]{0,0,0}\makebox(0,0)[t]{\lineheight{0}\smash{\begin{tabular}[t]{c}$\cF_1$\end{tabular}}}}%
    \put(0.44777536,0.00577573){\color[rgb]{0,0,0}\makebox(0,0)[t]{\lineheight{0}\smash{\begin{tabular}[t]{c}$\cF_2$\end{tabular}}}}%
    \put(0.77895959,0.00577573){\color[rgb]{0,0,0}\makebox(0,0)[t]{\lineheight{0}\smash{\begin{tabular}[t]{c}$\cF_3$\end{tabular}}}}%
    \put(0.27875024,0.15647277){\color[rgb]{0,0,0}\makebox(0,0)[t]{\lineheight{0}\smash{\begin{tabular}[t]{c} $+$\end{tabular}}}}%
    \put(0.60999994,0.15647277){\color[rgb]{0,0,0}\makebox(0,0)[t]{\lineheight{0}\smash{\begin{tabular}[t]{c}$+$\end{tabular}}}}%
    \put(0.92557801,0.15647272){\color[rgb]{0,0,0}\makebox(0,0)[lt]{\lineheight{0}\smash{\begin{tabular}[t]{l}$\simeq $\end{tabular}}}}%
    \put(0.97613765,0.15647277){\color[rgb]{0,0,0}\makebox(0,0)[lt]{\lineheight{0}\smash{\begin{tabular}[t]{l}$0$\end{tabular}}}}%
  \end{picture}%
\endgroup%

	\caption{The bypass relation.}\label{fig:15}
\end{figure}

Another property that we will use is an interpretation of certain dividing sets on cylinders as the following basepoint actions on knot Floer homology. If $\bL=(L,\ws,\zs)$ is a link in $Y$ and $w\in \ws$, there is an endomorphism of link Floer homology
\[
\Phi_{w}\colon \cCFL^-(Y,\bL,\frs)\to \cCFL^-(Y,\bL,\frs)
\]
given by the formula
\[
\Phi_{w}(\xs)=\scU^{-1}\sum_{\substack{\phi\in \pi_2(\xs,\ys) \\ \mu(\phi)=1}}n_w(\phi)\# (\cM(\phi)/\R) \scU^{n_{\ws}(\phi)} \scV^{n_{\zs}(\phi)}\cdot \ys.
\]
There is a similar endomorphism for the basepoint $z$, denoted $\Psi_{z}$. These endomorphisms have been studied by Sarkar and the first author. See \cite{SarkarMovingBasepoints} \cite{ZemQuasi} \cite{ZemCFLTQFT}*{Section~4.2}.

The second author proved \cite{ZemConnectedSums}*{Lemma~4.1} that the maps $\Phi_w$ and $\Psi_z$ coincide with the maps induced by the decorated link cobordisms shown in Figure~\ref{fig:22}. (Note that therein, there are just two basepoints on the corresponding component of $\bL$, which is the only case we need).

\begin{figure}[H]
	\centering
\begingroup%
  \makeatletter%
  \providecommand\color[2][]{%
    \errmessage{(Inkscape) Color is used for the text in Inkscape, but the package 'color.sty' is not loaded}%
    \renewcommand\color[2][]{}%
  }%
  \providecommand\transparent[1]{%
    \errmessage{(Inkscape) Transparency is used (non-zero) for the text in Inkscape, but the package 'transparent.sty' is not loaded}%
    \renewcommand\transparent[1]{}%
  }%
  \providecommand\rotatebox[2]{#2}%
  \newcommand*\fsize{\dimexpr\f@size pt\relax}%
  \newcommand*\lineheight[1]{\fontsize{\fsize}{#1\fsize}\selectfont}%
  \ifx\svgwidth\undefined%
    \setlength{\unitlength}{256.04011898bp}%
    \ifx\svgscale\undefined%
      \relax%
    \else%
      \setlength{\unitlength}{\unitlength * \real{\svgscale}}%
    \fi%
  \else%
    \setlength{\unitlength}{\svgwidth}%
  \fi%
  \global\let\svgwidth\undefined%
  \global\let\svgscale\undefined%
  \makeatother%
  \begin{picture}(1,0.22839567)%
    \lineheight{1}%
    \setlength\tabcolsep{0pt}%
    \put(0,0){\includegraphics[width=\unitlength,page=1]{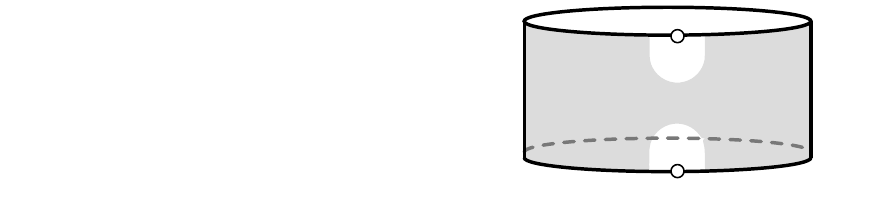}}%
    \put(0.95049238,0.1199753){\color[rgb]{0,0,0}\makebox(0,0)[lt]{\lineheight{0}\smash{\begin{tabular}[t]{l}$\Psi_z$\end{tabular}}}}%
    \put(0.73223336,0.00586148){\color[rgb]{0,0,0}\makebox(0,0)[lt]{\lineheight{0}\smash{\begin{tabular}[t]{l}$z$\end{tabular}}}}%
    \put(0,0){\includegraphics[width=\unitlength,page=2]{fig22.pdf}}%
    \put(0.35911084,0.11853848){\color[rgb]{0,0,0}\makebox(0,0)[lt]{\lineheight{0}\smash{\begin{tabular}[t]{l}$\Phi_w$\end{tabular}}}}%
    \put(0.13409411,0.00552963){\color[rgb]{0,0,0}\makebox(0,0)[lt]{\lineheight{0}\smash{\begin{tabular}[t]{l}$w$\end{tabular}}}}%
    \put(0,0){\includegraphics[width=\unitlength,page=3]{fig22.pdf}}%
  \end{picture}%
\endgroup%

	\caption{The basepoint actions as dividing sets on a cylinder.}\label{fig:22}
\end{figure}

A final relation involves the effect of stabilizing a surface by adding a tube (i.e., attaching the boundary of a 3-dimensional 1-handle in $W$, which is disjoint from $\cF$ except at its feet):

\begin{lem}\label{lem:stabilization}
Suppose that $(W,\cF)$ is a link cobordism, and $\cF'$ is obtained from $\cF$ by a 1-handle stabilization. If the feet of the stabilizing 1-handle are contained entirely in the $\ws$ subregion of $\cF$ and the dividing arcs of $\cF'$ coincide with those of $\cF$, then
\[
F_{W,\cF',\frs}=\scU \cdot F_{W,\cF,\frs}.
\]
If the feet of the stabilizing 1-handle are contained in the $\zs$ subregion of $\cF$, then
\[
F_{W,\cF',\frs}=\scV\cdot F_{W,\cF,\frs}.
\]
\end{lem}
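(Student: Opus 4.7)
The plan is to reduce the statement to a local calculation and then factor the tube stabilization into elementary link cobordism moves whose maps are known from \cite{ZemCFLTQFT}.

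First I would isotope the stabilizing 1-handle so that it is supported inside a small 4-ball $B\subset W$, with $B\cap \cF$ equal to a small disk $D_0$ contained in the $\ws$-subregion and disjoint from the dividing set. Because the link Floer cobordism maps are built from Heegaard diagrams compatible with local decompositions, the comparison of $F_{W,\cF,\frs}$ with $F_{W,\cF',\frs}$ reduces to computing the local contribution inside $B$.

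Next I would realize the extra genus through a Morse function on $\cF'$ that agrees with one on $\cF$ outside $B$ and has exactly two additional index-$1$ critical points inside $B$, consistent with the Euler characteristic drop $\chi(\cF')-\chi(\cF)=-2$. This presents $\cF'$ as differing from $\cF$ only by two band moves inside the $\ws$-subregion: the first band splits off a small unknotted circle $c$ in the interior of the cobordism, and the second band immediately merges $c$ back into the main component. Both bands, and the intermediate circle $c$, lie entirely in $\cF_{\ws}$, and no dividing arc is ever crossed.

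Using the band cobordism formulas of \cite{ZemCFLTQFT}, a direct computation in the associated local Heegaard model shows that the pair of $\ws$-bands which births and then destroys $c$ is precisely multiplication by $\scU$. The grading check via~\eqref{eq:gr-w-grading} gives $\tilde{\chi}(\cF'_{\ws})-\tilde{\chi}(\cF_{\ws})=-2$ and $\tilde{\chi}(\cF'_{\zs})-\tilde{\chi}(\cF_{\zs})=0$, so the composite has $(\gr_{\ws},\gr_{\zs})$-bidegree $(-2,0)$; combined with $\bF[\scU,\scV]$-equivariance, the map must equal $c\cdot\scU$ for some $c\in\bF$, and the local Heegaard model forces $c=1$. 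The equation $F_{W,\cF',\frs}=\scV\cdot F_{W,\cF,\frs}$ for tubes in the $\zs$-subregion follows by symmetry, interchanging the roles of $\ws$ and $\zs$ throughout.

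The main obstacle is the bookkeeping for the auxiliary component $c$: in the cobordism category every link component must carry at least two basepoints, yet $c$ is born and killed entirely in the interior of $B$ without ever acquiring a $\zs$-basepoint. One has to verify that this is permissible within the link Floer TQFT framework (since $c$ appears only between the two bands and is never an object of the category) and that the resulting composite is well-defined and independent of the factorization chosen. Once the local model is set up consistently, the remainder of the argument is a small, explicit Heegaard calculation that produces exactly the factor $\scU$ with no extraneous terms.
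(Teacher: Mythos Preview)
Your overall plan---localize near the tube and factor into two bands---matches the paper's, but the obstacle you flag is genuine and your setup does not avoid it. In fact the basepoint problem arises even earlier than you note: in your local model $B\cap\cF=D_0$ sits entirely in the $\ws$-subregion, so the unknot $\partial D_0\subset\partial B$ carries no $\zs$-basepoint and is not an admissible object of the link Floer cobordism category (Definition~\ref{def:category}). You therefore cannot factor $F_{W,\cF,\frs}$ through $(B,D_0)$ in the first place, and the subsequent band decomposition never gets started. Saying that the auxiliary circle $c$ ``is never an object of the category'' does not help: decomposing into elementary band maps from \cite{ZemCFLTQFT} \emph{is} a factorization through the category, and every intermediate slice must be a multi-pointed link with at least one $\ws$- and one $\zs$-basepoint on each component.

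The paper sidesteps this by choosing a different local model. Rather than a $4$-ball meeting $\cF$ in a single $\ws$-colored disk, it takes a neighborhood of the entire tube arranged so that $B\cap\cF$ is a pair of boundary-parallel disks bounding a $2$-component unlink in $S^3$, \emph{each disk carrying a single dividing arc}. Now every link component in sight has both a $w$- and a $z$-basepoint. The tube is then realized as two $\zs$-bands (for a tube in the $\zs$-region): the first merges the two unlink components into a single unknot with four basepoints, and the second splits it back. Every intermediate stage is a legitimate object, and the composite of the two band maps is computed explicitly on a genus-$0$ Heegaard diagram with four basepoints (cf.\ \cite{ZemCFLTQFT}*{Figure~8.3}) to be multiplication by $\scV$; the $\ws$-case is symmetric. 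Your grading check is correct once the local model is set up this way; what was missing was the choice of neighborhood that makes all intermediate links admissible.
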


\begin{proof}
When there is one dividing arc, Lemma~\ref{lem:stabilization} follows from \cite{JZStabilization}*{Lemma~5.3}. A simple proof that works more generally is as follows. By factoring the cobordism map through a neighborhood of the tube, one may reduce to the case when $W=B^4$, and $\cF$ consists of a pair of boundary parallel disks, which bound a 2-component unlink in $S^3$, such that furthermore each disk has a single dividing arc. In this case, the cobordism maps for $\cF$ and $\cF'$ may be computed explicitly.  The Floer homology for a 2-component unlink is $\langle \theta^+,\theta^-\rangle \otimes \bF[\scU,\scV]$. The map for $\cF$ is given by $1\mapsto \theta^+$, extended $\bF[\scU,\scV]$-equivariantly. If the tube is added to the $\zs$ subregion, then the cobordism map for $\cF'$ is equal to the map for $\cF$, followed by two $\zs$ band attachments. By an easy model computation on a genus 0 diagram with 4 basepoints, the composition of these two $\zs$-bands is multiplication by $\scV$. A similar argument works for a tube added to the $\ws$ subregion. The diagram where the model computation may be performed is shown in \cite{ZemCFLTQFT}*{Figure~8.3}.
\end{proof}

\section{The invariants $Y_n(K)$}
\label{def:invariants-Y}
Let $\cS_n$ denote a standard staircase complex, with $n$ steps, each of length $n$, which we view as a complex over $\bF[\scU,\scV]$. The complex $\cS_n$ has $2n+1$ generators, which we denote by $y_{-n},y_{-n+1},\dots, y_{n-1}, y_{n}$. The differential is given by
\[
\d(y_{-n+2i+1})=\scU\cdot y_{-n+2i}+\scV\cdot y_{-n+2i+2} \quad \text{and} \quad \d(y_{-n+2i})=0,
\]
extended equivariantly over $\bF[\scU,\scV]$. The $(\gr_{\ws},\gr_{\zs})$-bigrading of $y_i$ is $(-n-i,-n+i)$. If $K$ is a knot in $S^3$, write $\cC_K$ for $\cCFK^-(K)$. We define
\[
Y_n(K):=V_0(\cC_K\otimes \cS_n^\vee).
\]
Here, $\cS_n^\vee$ denotes the dual $\bF[\scU,\scV]$-complex to $\cS_n$, and
\[
V_0(C)=-d(A_0(C))/2,
\]
where $A_0(C)$ denotes the subcomplex generated by monomials $\scU^i\scV^j\cdot \xs$ with $i,j\ge 0$ and $A(\xs)+j-i=0$.

Note that the dual complex $\cS_n^\vee$ has generators $x_{-n},\dots, x_n$ such that $x_i$ has bigrading  $(n+i,n-i)$.

\begin{lem}
 The invariants $Y_n(K)$ satisfy the following:
 \begin{enumerate}
  \item\label{claim:Y-0} $Y_n(K)$ is a concordance invariant.
 \item\label{claim:Y-1} $0\le Y_n(K)$.
 \item\label{claim:Y-2} $Y_{n}(K)+1\le Y_{n+1}(K)\le Y_{n}(K)$.
 \item\label{claim:Y-3} $Y_n(K)=0$ for sufficiently large $n$.
 \item\label{claim:Y-4} $V_n(K)\le Y_n(K)$.
 \end{enumerate}
\end{lem}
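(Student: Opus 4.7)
My plan is to view $Y_n(K)=V_0(\cC_K\otimes\cS_n^\vee)$ as a functional of the chain complex $\cC_K\otimes\cS_n^\vee$, so that each of (1)--(5) reduces to a standard property of $V_0$ combined with an input from the link Floer TQFT of Section~\ref{subsec:TQFT} or from the combinatorics of the staircases. The standing principle I will use is that a bigrading-preserving, $\bF[\scU,\scV]$-equivariant chain map $f\colon C\to C'$ inducing an isomorphism on the $U$-inverted homology satisfies $V_0(C)\ge V_0(C')$; two such maps in opposite directions force equality.

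For \textbf{(1)}, a concordance $A$ from $K$ to $K'$ in $S^3\times[0,1]$ equipped with one dividing arc, together with its reverse, yields via the link Floer TQFT a pair of bigrading-preserving local equivalences $\cC_K\rightleftarrows\cC_{K'}$; tensoring with $\id_{\cS_n^\vee}$ preserves these properties, so $Y_n(K)=Y_n(K')$. For \textbf{(5)}, the $\bF[\scU,\scV]$-linear projection $\pi\colon A_0(\cC_K\otimes\cS_n^\vee)\to\scA_n(K)$ given by $\xs\otimes x_i\mapsto\delta_{i,-n}\xs$ is a bigrading-preserving chain map---one checks directly from $\d^\vee x_i=\scU x_{i+1}+\scV x_{i-1}$ (with the boundary cases) that no $\d^\vee x_i$ has an $x_{-n}$-component---and it intertwines the $U$-actions. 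A short argument then shows that $\pi$ sends the top $U$-tower class of $A_0$ to a class of the same bigrading in $\scA_n$, yielding $d(A_0)\le d(\scA_n)$ and hence $V_n(K)\le Y_n(K)$.

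For \textbf{(2)}, the explicit element $z_n:=\sum_{k=0}^{n}\scU^k\scV^{n-k}\,x_{-n+2k}\in A_0(\cS_n^\vee)$ is a cycle of bigrading $(0,0)$ of infinite $U$-order; tensoring it with the cycle in $A_0(\cC_K)$ representing the top $U$-tower class of $\cC_K$ produces a class in $A_0(\cC_K\otimes\cS_n^\vee)$ of bigrading $(g,g)$ with $g\le 0$, bounding $d(A_0)\le 0$ and giving $Y_n(K)\ge 0$. For \textbf{(4)}, bounded Alexander support of $\cC_K$ provides the matching lower bound $d(A_0)\ge 0$ for $n$ sufficiently large via a direct structural analysis, forcing $d(A_0)=0$ and hence $Y_n(K)=0$.

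For \textbf{(3)}, which I read as $Y_n(K)-1\le Y_{n+1}(K)\le Y_n(K)$ (the printed chain of inequalities is self-contradictory), I would construct two grading-compatible chain maps between $\cS_n^\vee$ and $\cS_{n+1}^\vee$---one preserving bigrading and one shifting both components by $-1$---each a local equivalence modulo its stated shift, obtained by presenting $\cS_{n+1}$ as a mapping cone built from $\cS_n$ and a 2-generator $\bF[\scU,\scV]$-complex, and dualizing. Tensoring each with $\id_{\cC_K}$ and applying the standing principle yields both inequalities. The main obstacle I anticipate is here: the roles of ``kink'' and ``cycle'' generators of $\cS_n$ alternate in parity with $n$, so the required maps cannot be produced by a direct combinatorial inclusion; the mapping-cone presentation of $\cS_{n+1}$ in terms of $\cS_n$ and the verification that each resulting map is a local equivalence modulo its shift are the essential technical points.
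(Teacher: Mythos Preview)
Your treatment of (1) and (5) matches the paper's, and your reading of (3) as $Y_n(K)-1\le Y_{n+1}(K)\le Y_n(K)$ is correct (the printed chain is a typo, as the paper's own proof confirms). For (3), the paper simply writes down two explicit $\bF[\scU,\scV]$-equivariant maps $\Pi\colon\cS_{n+1}^\vee\to\cS_n^\vee$ of bidegree $(-2,-2)$ and $\scI\colon\cS_n^\vee\to\cS_{n+1}^\vee$ of bidegree $(0,0)$, displayed in figures; your mapping-cone approach is a reasonable alternative but is not needed---the maps are easy to guess once you try (e.g.\ $\Pi(x_i)=\scV\cdot x_i$ for $i\ge -n$ and $\Pi(x_{-n-1})=0$).

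There is, however, a genuine error in your argument for (2), and it propagates to (4). To show $Y_n(K)\ge 0$, i.e.\ $d(A_0(\cC_K\otimes\cS_n^\vee))\le 0$, your standing principle requires a local map \emph{out of} $A_0(\cC_K\otimes\cS_n^\vee)$ to a complex with $d=0$. What you do instead is construct a non-torsion cycle \emph{in} $A_0(\cC_K\otimes\cS_n^\vee)$---equivalently, a local map \emph{into} it from $\bF[U]$. That yields $d(A_0)\ge g$, not $d(A_0)\le 0$; with your $g=-2V_0(K)\le 0$ the conclusion is $Y_n(K)\le V_0(K)$, an upper bound rather than the desired lower bound. (In fact this inequality is a weak consequence of the corrected (3), not (2).)

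The paper's proof of (2) goes in the correct direction: it uses the grading-preserving inclusion
\[
A_0(\cC_K\otimes\cS_n^\vee)\hookrightarrow B(\cC_K)\otimes_{\bF[U]} B(\cS_n^\vee),
\]
where $B(-)$ relaxes the constraint $j\ge 0$ (keeping only $i\ge 0$). Each factor on the right is homotopy equivalent to $\bF[U]$ with generator in grading $0$, so the target has $d=0$, and the inclusion is local; hence $d(A_0)\le 0$. For (4), the paper then directly exhibits, for $n$ large enough that $\scA_{\pm n}(K)$ coincide with the ``vertical'' and ``horizontal'' truncations, a staircase of non-torsion cycles in the appropriate gradings, giving $Y_{2n}(K)=0$. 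Your sketch for (4) (``direct structural analysis'') is both vague and, as written, leans on the incorrect direction from (2); it would need to be replaced by an explicit construction of a non-torsion cycle of bigrading $(0,0)$ in $A_0(\cC_K\otimes\cS_n^\vee)$ for large $n$, together with the corrected proof of (2).
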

\begin{proof}
Claim~\eqref{claim:Y-0} follows since the invariant $V_0$ is a well-defined invariant of a local equivalence class of knot complexes, and the tensor product operation is a well-defined group operation on the set of local equivalence classes; see \cite{ZemConnectedSums}*{Proposition~2.6}  (forgetting about the involutive part), \cite{HomSurvey} or \cite{KimParkInfiniteRank}*{Section~3}.

Claim~\eqref{claim:Y-1} follows since there is a grading preserving inclusion of $A_0(\cC_K\otimes \cS^\vee_n)$ into $B(\cC_K)\otimes_{\bF[U]} B(\cS_n^{\vee})$, where $B(\cC_K)$ denotes the subcomplex generated by monomials $\scU^{i}\scV^j\cdot \xs$ with $i\ge 0$, no restriction on $j$, and with $A(\xs)+j-i=0$, and similarly for $B(\cS_n^{\vee})$. However, $B(\cC_K)\simeq B(\cS_n^{\vee})\simeq \bF[U]$, as graded complexes, where we give $1$ grading 0. The claim follows.

We now consider the first inequality of claim~\eqref{claim:Y-2}. We define a chain map
\[
\Pi\colon \cS_{n+1}^{\vee}\to \cS_{n}^\vee
\]
in Figure~\ref{fig:the-map-pi-1}. The map $\Pi$ has $(\gr_{\ws},\gr_{\zs})$-bigrading $(-2,-2)$, and sends $\bF[U]$-non-torsion elements to $\bF[U]$-non-torsion elements. Since $\Pi$ preserves the Alexander grading, we obtain a local map
\[
\id\otimes \Pi\colon  A_0(\cC_K\otimes \cS_{n+1}^{\vee})\to A_0(\cC_K\otimes \cS_n^\vee),
\]
which lowers the homological grading by $2$. Hence
\[
Y_{n+1}(K)\ge Y_n(K)+1,
\]
giving the first inequality.

\begin{figure}[H]
\begin{tikzcd}[column sep={1cm,between origins},row sep=.8cm,labels=description]
&&& x_{-n+1}
&&
\cdots &&
x_{n-1}&
\\
&&x_{-n}\ar[ur,dashed, "\scU"]
&&
x_{-n+2}\ar[ul,dashed, "\scV"]\ar[ur,dashed, "\scU"]&\cdots&
x_{n-2}\ar[ur, dashed, "\scU"]\ar[ul,dashed, "\scV"]&&
x_{n}\ar[ul,dashed, "\scV"]
\\
&x_{-n}&& x_{-n+2}\ar[uu, "\scV"]
&&
\cdots &&
x_{n}\ar[uu, "\scV"]&
\\
x_{-n-1}
\ar[ur,dashed, "\scU"]
&&x_{-n+1}
\ar[ul,dashed, "\scV"]
\ar[uu, "\scV"]
\ar[ur,dashed, "\scU"]
&&
x_{-n+3}\ar[ul,dashed, "\scV"]\ar[ur, dashed, "\scU"]\ar[uu, "\scV"]&\cdots&
x_{n-1}\ar[uu, "\scV"]\ar[ur, dashed, "\scU"]\ar[ul,"\scV",dashed]&&
x_{n+1}\ar[uu, "\scV"]\ar[ul,dashed, "\scV"]
\end{tikzcd}
\caption{The map $\Pi\colon \cS_{n+1}^{\vee}\to \cS_{n}^\vee$. Solid arrows denote $\Pi$. Dashed arrows denote the internal differentials.}
\label{fig:the-map-pi-1}
\end{figure}

The second inequality of claim~\eqref{claim:Y-2} follows from the existence of a grading preserving local map
\[
\scI\colon \cS_n^{\vee}\to \cS_{n+1}^{\vee}.
\]
The map $\scI$ is shown in Figure~\ref{fig:the-map-I-1}. The map $\scI$ sends $\bF[U]$-non-torsion elements to $\bF[U]$-non-torsion elements, and preserves the Alexander grading. Hence, we obtain a grading preserving local map
\[
\id \otimes \scI\colon A_0(\cC_K\otimes \cS_{n}^{\vee})\to A_0(\cC_K\otimes \cS_{n+1}^\vee),
\]
which implies that
\[
Y_{n+1}(K)\le Y_n(K).
\]

\begin{figure}[H]
\begin{tikzcd}[column sep={1cm,between origins},row sep=.8cm,labels=description]
&x_{-n}
&& x_{-n+2}
&&
\cdots &&
x_{n}
&&\,&\,
\\
x_{-n-1}
	\ar[ur,dashed,"\scU"]
&&x_{-n+1}
	\ar[ur,dashed,"\scU"]
	\ar[ul,dashed, "\scV"]
&&
x_{-n+3}
	\ar[ul,dashed,"\scV"]
	\ar[ur,dashed,"\scU"]
&\cdots&
x_{n-1}
	\ar[ur,dashed,"\scU"]
	\ar[ul,dashed, "\scV"]
&&
x_{n+1}
	\ar[ul,dashed, "\scV"]
&&
\,
\\
&&& x_{-n+1}
	\ar[uu, "\scU"]
&&
\cdots &&
x_{n-1}
	\ar[uu,"\scU"]&
\\
&&x_{-n}
	\ar[ur,dashed,"\scU"]
	\ar[uull, "\scV"]
	\ar[uu,"\scU"]
&&
x_{-n}
	\ar[ul,dashed, "\scV"]
	\ar[ur, dashed,"\scU"]
	\ar[uu, "\scU"]
&\cdots&
x_{n-2}
	\ar[ur, dashed,"\scU"]
	\ar[ul,"\scV",dashed]
	\ar[uu, "\scU"]&&
x_{n}
	\ar[ul,"\scV",dashed]
	\ar[uu, "\scU"] 
\end{tikzcd}
\caption{The map $\mathscr{I}\colon \cS_{n}^\vee\to \cS_{n+1}^\vee$. Solid arrows denote $\mathscr{I}$. Dashed arrows denote the internal differentials.}
\label{fig:the-map-I-1}
\end{figure}

Next we consider claim~\eqref{claim:Y-3}. The equality  $Y_n(\cC_K\otimes \cS_{n}^\vee)=0$ is equivalent to the existence of $\bF[U]$-non-torsion cycles $z_{-n},z_{-n+2},\dots, z_n$ in $\scA_{-n},\scA_{-n+2},\dots, \scA_{n}$, all with $\delta$-grading $n$, such that
\begin{equation}
\scV\cdot [z_{i-1}]=\scU \cdot [z_{i+1}],
\label{eq:staircase-relation}
\end{equation}
whenever $i-n$ is odd and $-n+1\le i\le n-1$, where the brackets denote the induced element of homology. First, let $n$ be so large that $H_*(\scA_0)$ has rank 0 or 1 in each $\delta$-grading of $n$ or less. Furthermore, pick $n$ large enough that $\scA_n$ contains all $\xs \cdot \scU^i \scV^j$ with $A(\xs)+j-i=n$ and with $i\ge 0$, and furthermore so that $\scA_{-n}$ contains all $\xs \cdot \scU^i \scV^j$ with $A(\xs)+j-i=-n$ which have $j\ge 0$. In particular, there are $\bF[U]$-non-torsion cycles $z_{-n}\in \scA_{-n}$ and $z_{n}\in \scA_{n}$ of $\delta$-grading $-n$. By the condition on $\scA_0$, we know that $\scU^n\cdot[z_n]=\scV^n\cdot [z_{-n}]$. We note that the sequence
\[
\scV^n [z_n],\scU\scV^{n-1} [z_n],\dots, \scU^n [z_n]=\scV^n [z_{-n}], \scU \scV^{n-1} [z_{-n}],\dots, \scU^n[z_{-n}] 
\]
satisfies ~\eqref{eq:staircase-relation} and each element has $\delta$-grading $-2n$. Hence $Y_{2n}(K)=0$.

Finally, we consider claim~\eqref{claim:Y-4}. The generator $x_{-n}$ of $\cS_n^\vee$ has Alexander grading $-n$, so the complex $A_0(\cC_K\otimes \cS_{n}^\vee)$ has an $\bF[U]$-module summand equal to $\scA_{n}\otimes x_{-n}$. Furthermore, projection onto this summand gives a grading preserving chain map, which sends $\bF[U]$-non-torsion elements to $\bF[U]$-non-torsion elements. Hence
\[
Y_n(K)\ge V_n(K). \qedhere
\] 
\end{proof}

\section{Slice genus and clasp number bounds}
\label{sec:bounds-and-Y}

In this section, we prove Theorem~\ref{thm:Y-genus-clasp} of the introduction:

\begin{thm}
\label{thm:Y-genus-clasp-restated}
 Suppose $K$ is a knot in $S^3$.
\begin{enumerate}
\item\label{subthm:Y-genus} If $0\le n\le g_4(K)$, then
\[
Y_n(K)\le \left \lceil \frac{g_4(K)-n}{2} \right\rceil,
\]
and $Y_n(K) = 0$ if $n > g_4(K)$.
\item\label{subthm:Y-clasp} If $0\le n\le c_4^+(K)$, then
\[
Y_n(K)\le \left \lceil \frac{c_4^+(K)-n}{2}\right \rceil,
\]
and $Y_n(K) = 0$ if $n > c_4^+(K)$.
\end{enumerate}
If $n\ge g_4(K)$ or $n\ge c^+(K)$, then $Y_n(K)=0$.
\end{thm}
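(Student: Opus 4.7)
The plan is to produce a grading-preserving ``local'' chain map $f\colon \cS_g\to \cC_K$ (for part~\eqref{subthm:Y-genus}) out of a genus $g=g_4(K)$ slice surface, and then to deduce the $Y_n$-bound by tensoring with $\cS_n^\vee$ and invoking monotonicity of $V_0$. Fix a connected genus $g$ slice surface $\Sigma\subset B^4$ for $K$ and view it as a link cobordism from $(S^3,U)$ (with $U$ the unknot) to $(S^3,K)$. For each $s\in\{-g,-g+2,\dots,g\}$, choose a decoration $\cF_s$ with a single dividing arc such that $\chi(\Sigma_{\ws})-\chi(\Sigma_{\zs})=2s$, obtained by distributing the $g$ genus handles of $\Sigma$ between the two subregions. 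Set $\xi_s:=F_{\cF_s}(1)\in \cCFK^-(K)$, where $1$ is the distinguished generator of $\cCFK^-(U)\cong \bF[\scU,\scV]$. By the grading formulas~\eqref{eq:gr-w-grading}--\eqref{eq:Alexander-grading} (specialised to the slice setting, where the cohomological and self-intersection terms vanish), $\xi_s$ has Alexander grading $s$ and constant $\delta$-grading $-g$, and it is $\bF[U]$-non-torsion because the cobordism $\Sigma$ induces an isomorphism on the $U$-inverted Floer homology of $S^3$.

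The technical core is the \emph{staircase relation}
\[
\scU\cdot\xi_s\simeq \scV\cdot\xi_{s-2}
\]
in $\cCFK^-(K)$, for each $s\in\{-g+2,\dots,g\}$. I would derive it by applying the bypass relation (Figure~\ref{fig:15}) at a local disk in $\Sigma$ where the dividing sets underlying $\cF_s$ and $\cF_{s-2}$ differ. Two of the three resulting decorations are $\cF_s$ with a small $\ws$-tube appended and $\cF_{s-2}$ with a small $\zs$-tube appended; Lemma~\ref{lem:stabilization} identifies their cobordism maps with $\scU\cdot F_{\cF_s}$ and $\scV\cdot F_{\cF_{s-2}}$ respectively. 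The third decoration contains an isolated closed dividing circle bounding a disk in a single subregion, and a local model computation (of the sort used in the proof of Lemma~\ref{lem:stabilization}) shows that the corresponding cobordism map is null-homotopic. Evaluating the bypass identity $F_{\cF^1}+F_{\cF^2}+F_{\cF^3}\simeq 0$ on $1\in \cCFK^-(U)$ then gives the claimed homotopy. The $\xi_s$ now package into a grading-preserving chain map $f\colon \cS_g\to \cC_K$ sending $y_{-g+2i}\mapsto \xi_{g-2i}$ and each odd-index generator $y_{-g+2i+1}$ to a chosen primitive for the staircase boundary; non-torsion preservation of $f$ follows from the non-torsion property of the $\xi_s$.

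Granting this local map, tensoring with $\cS_n^\vee$ gives a local map $\cS_g\otimes \cS_n^\vee\to \cC_K\otimes \cS_n^\vee$. Monotonicity of $V_0$ under local maps then yields
\[
Y_n(K)=V_0\!\left(\cC_K\otimes \cS_n^\vee\right)\le V_0\!\left(\cS_g\otimes \cS_n^\vee\right).
\]
A direct finite-dimensional computation on the bigraded tensor complex (whose generators are $y_i\otimes x_j$, $-g\le i\le g$, $-n\le j\le n$) shows the right-hand side equals $\lceil (g-n)/2\rceil$ when $0\le n\le g$ and $0$ when $n>g$; the witnessing non-torsion cycle is built from the diagonal elements $y_{-g+2i}\otimes x_{-(g-2i)}$, weighted by suitable powers of $\scU$ and $\scV$ to land in Alexander grading $0$. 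Combined with the monotonicity claim~\ref{claim:Y-2} of the Lemma in Section~\ref{def:invariants-Y}, this proves part~\eqref{subthm:Y-genus}. For part~\eqref{subthm:Y-clasp}, the same scheme runs on the embedded genus $c^++c^-$ surface $\Sigma'$ obtained by tubing the double points of an immersed slice disk, with positive-double-point tubes placed in the $\ws$-subregion and negative-double-point tubes in the $\zs$-subregion. By Lemma~\ref{lem:stabilization}, these tubes absorb into a fixed factor $\scU^{c^+}\scV^{c^-}$, and the remaining decoration freedom on the planar part of $\Sigma'$ realises Alexander gradings $\{-c^+,-c^++2,\dots,c^+\}$ after this absorption. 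The same bypass argument then produces a local map $\cS_{c^+}\to \cC_K$, and the bound $Y_n(K)\le \lceil (c^+-n)/2\rceil$ follows as in part~\eqref{subthm:Y-genus}.

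The main obstacle is the bypass-based staircase relation: one must arrange the local dividing-set modification so that Lemma~\ref{lem:stabilization} applies cleanly to two of the three resulting decorations, and so that the third is demonstrably null-homotopic. For part~\eqref{subthm:Y-clasp}, an additional obstacle is the correct matching between the positive/negative sign of each double point and the placement of the corresponding tube in the $\ws$- or $\zs$-subregion, so that only $c_4^+$ (and not, say, $c_4^++c_4^-$) appears in the final bound.
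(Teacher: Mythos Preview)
Your overall strategy---produce a grading-preserving local map $\cS_g\to\cC_K$ from a genus~$g$ slice surface and then tensor with $\cS_n^\vee$---is the same as the paper's, and the reduction to the staircase relation $[\scU\cdot\xi_s]=[\scV\cdot\xi_{s-2}]$ is exactly the key point. However, there are genuine gaps in both parts.

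\textbf{The staircase relation.} Your single-bypass argument does not work as written. A bypass move does not change the topology of the surface, so applying it to a disk in $\Sigma$ cannot produce ``$\cF_s$ with a $\ws$-tube appended.'' If instead you work on the stabilized surface $\Sigma\#T$ and interpret $\scU\cdot\xi_s$ and $\scV\cdot\xi_{s-2}$ via Lemma~\ref{lem:stabilization} as cobordism maps for two decorations $\cF_s^+$, $\cF_{s-2}^-$ of $\Sigma\#T$ with equal Euler characteristics, then these decorations differ by a nontrivial mapping class, not by a single bypass. The bypass in Figure~\ref{fig:20} that realizes a Dehn twist has as its third term a decoration whose closed curve is the \emph{non-separating} twist curve, not a null-homotopic circle; its map is not zero on the nose, only after multiplying by $\scU$ or $\scV$ (Lemma~\ref{lem:almost-trivial-if-loop}). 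That weaker vanishing only yields Lemma~\ref{lem:dividing-euler-characteristic-1}, i.e.\ $[\scU^2\xi_s]=[\scU\scV\xi_{s-2}]$, which is one power of $U$ short. The paper closes this gap with Proposition~\ref{prop:main-claim-dividing-set-ind}: on a stabilized surface one can reduce (via bypasses near the stabilization neck and Lemma~\ref{lem:neighborhood-stabilization}) to dividing sets disjoint from the extra torus, absorb the torus as a factor of $\scU$ or $\scV$, and only then invoke Lemma~\ref{lem:dividing-euler-characteristic-1}. This extra layer is precisely what lets you drop the spurious factor of $\scU$.

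\textbf{Part~\eqref{subthm:Y-clasp}.} The tubing approach cannot give the bound with $c_4^+$ alone. After tubing all double points you obtain a genus $c^++c^-$ surface in $B^4$, and every single-arc decoration of it produces an element of $\delta$-grading $-(c^++c^-)$; there is no decoration landing in $\delta$-grading $-c^+$. Lemma~\ref{lem:stabilization} does not help: ``absorbing'' a negative-double-point tube as a factor of $\scV$ would mean comparing to the map for the surface with that tube removed, but that surface is still immersed and has no cobordism map. The paper's device is different: blow up at each negative double point and smooth each positive one, obtaining a null-homologous genus~$c^+$ embedded surface in $B^4\# c^-\overline{\CP}^2$; with a $\Spin^c$ structure of maximal square the grading formulas then give elements of $\delta$-grading $-c^+$, and the argument of part~\eqref{subthm:Y-genus} goes through verbatim.
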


\begin{lem} \label{lem:almost-trivial-if-loop}
Suppose that $(W,\cF)\colon (Y_1,\bL_1)\to (Y_2,\bL_2)$ is a link cobordism such that $b_1(W)=0$, and the dividing set of $\cF$ contains a closed loop. Then
\[
\scU\cdot F_{W,\cF,\frs}\simeq 0 \quad  \text{and} \quad \scV\cdot F_{W,\cF,\frs}\simeq 0.
\] 
\end{lem}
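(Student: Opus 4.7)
The plan is to combine the bypass relation (Figure~\ref{fig:15}) with the tube stabilization lemma (Lemma~\ref{lem:stabilization}). Let $\gamma$ be an innermost closed loop in the dividing set, so that $\gamma$ bounds a disk $D \subset S$ lying entirely in one of $\cF_{\ws}$ or $\cF_{\zs}$. By the $\scU \leftrightarrow \scV$ symmetry it suffices to treat $D \subset \cF_{\ws}$ and prove $\scU \cdot F_{W,\cF,\frs} \simeq 0$; the assertion $\scV \cdot F_{W,\cF,\frs} \simeq 0$ follows by applying the dual case (possibly after producing, via an auxiliary bypass, a closed loop bounding a disk in $\cF_{\zs}$).

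The key step is to apply the bypass relation inside a small disk $D' \subset S$ that meets $\gamma$ transversely in a short arc, producing three decorated cobordisms $\cF_1 = \cF$, $\cF_2$, $\cF_3$ with
\[
F_{W,\cF_1,\frs} + F_{W,\cF_2,\frs} + F_{W,\cF_3,\frs} \simeq 0.
\]
I would choose $D'$ and the bypass pattern so that in $\cF_2$ and $\cF_3$ the local modification of $\gamma$, combined with an isotopy of the decoration supported inside $D$, exhibits each of them as a $1$-handle stabilization of a simpler decorated cobordism whose stabilizing tube has both feet in $\cF_{\ws}$. Lemma~\ref{lem:stabilization} then writes $F_{W,\cF_j,\frs} \simeq \scU \cdot G_j$ for $j = 2, 3$, and the bypass identity becomes $F_{W,\cF,\frs} \simeq \scU \cdot (G_2 + G_3)$. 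Multiplying by $\scU$ and either iterating the argument on $G_2, G_3$ (whose dividing sets still contain closed loop fragments of $\gamma$ when $D'$ was chosen small) or applying a direct cancellation, would then force $\scU \cdot F_{W,\cF,\frs} \simeq 0$. The hypothesis $b_1(W) = 0$ enters to ensure that $\frs$ has a unique extension to each intermediate cobordism, so all bypass and stabilization identities hold for a fixed $\frs$ without summation over $\Spin^c$ extensions.

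The main obstacle is showing that the bypass-modified decorations $\cF_2, \cF_3$ genuinely factor as tube stabilizations of simpler decorated cobordisms. A closed dividing loop bounding a disk in $\cF_{\ws}$ is not literally a handle, but morally plays the role of the cap of a stabilizing tube; converting this intuition into a rigorous identification will require a careful choice of $D'$ and the orientation of the bypass, combined with isotopy invariance of the cobordism map under ambient isotopies of the dividing set that fix the basepoints, and possibly iterated bypasses near the remaining arcs of $\gamma$. Making this reduction precise---and verifying termination of any iteration---is the technical crux of the argument.
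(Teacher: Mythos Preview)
Your plan has a genuine gap at its core. The bypass relation and Lemma~\ref{lem:stabilization} operate at different levels: a bypass changes only the dividing set on a fixed underlying surface $S\subset W$, whereas Lemma~\ref{lem:stabilization} requires the surface itself to be an internal $1$-handle stabilization $S' = S_0 \# T$. All three terms $\cF_1,\cF_2,\cF_3$ in any bypass triple share the same $S$, so no bypass move can ``exhibit $\cF_2$ or $\cF_3$ as a $1$-handle stabilization of a simpler decorated cobordism.'' You acknowledge this (``not literally a handle''), but the conversion you hope for simply cannot come from bypasses and isotopies of the decoration, which is all you allow yourself. Relatedly, your reduction to an \emph{innermost} loop ``so that $\gamma$ bounds a disk $D\subset S$'' is unjustified: the lemma only posits a closed dividing curve, which on a positive-genus $S$ need not bound any disk. (And when it does bound a disk, the map is in fact already zero by the construction of the cobordism maps, so that is the easy case; the content is when $\gamma$ is essential.) Finally, even granting $F_{W,\cF,\frs}\simeq \scU\cdot(G_2+G_3)$, multiplying by $\scU$ gives $\scU F\simeq \scU^2(G_2+G_3)$, not $0$; iterating only shows $F$ is $\scU$-divisible to all orders, which neither terminates nor yields the stated conclusion.

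Your use of $b_1(W)=0$ is also misplaced. Bypass and stabilization identities live on a fixed $(W,\frs)$; nothing is being glued, so there is no extension-of-$\Spin^c$ issue. In the paper's argument the hypothesis $b_1(W)=0$ plays a completely different and essential role: one factors the cobordism through a tubular neighborhood of $\gamma$, whose boundary is $(S^1\times S^2,\bL)$ with $\bL$ two $S^1$-fibers, and recognizes the resulting piece as the endomorphism $\Phi_{w_1}\Psi_{z_1}$ (via Figure~\ref{fig:22}). The identity $A_{[L_1]}=\scU\Phi_{w_1}+\scV\Psi_{z_1}$ together with $\Psi_{z_1}^2\simeq 0$ gives $\scU\,\Phi_{w_1}\Psi_{z_1}\simeq A_{[L_1]}\Psi_{z_1}$, and then $b_1(W)=0$ kills the $H_1(W)/\Tors$-twisting, forcing $\scU\cdot F_{W,\cF,\frs}\simeq 0$. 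This $H_1$-action mechanism is the missing idea in your approach.
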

\begin{proof} Suppose there is a closed loop in the dividing set. We factor the cobordism map through a neighborhood of the closed curve. After an isotopy of the dividing set, the map factors through the based link $(S^1\times S^2, \bL)$, where $\bL$ consists of two $S^1$-fibers of $S^1\times S^2$, oriented in opposite directions. Furthermore $\bL$ has two basepoints on each link component. Using the interpretation of the basepoint actions as the decorated link cobordisms in Figure~\ref{fig:22}, we may additionally factor the cobordism map through the map $\Phi_{w_1}\Psi_{z_1}$, where $w_1$ and $z_1$ are the basepoints on one of the link components of $\bL$. See Figure~\ref{fig:12}.

\begin{figure}[H]
	\centering
\begingroup%
  \makeatletter%
  \providecommand\color[2][]{%
    \errmessage{(Inkscape) Color is used for the text in Inkscape, but the package 'color.sty' is not loaded}%
    \renewcommand\color[2][]{}%
  }%
  \providecommand\transparent[1]{%
    \errmessage{(Inkscape) Transparency is used (non-zero) for the text in Inkscape, but the package 'transparent.sty' is not loaded}%
    \renewcommand\transparent[1]{}%
  }%
  \providecommand\rotatebox[2]{#2}%
  \newcommand*\fsize{\dimexpr\f@size pt\relax}%
  \newcommand*\lineheight[1]{\fontsize{\fsize}{#1\fsize}\selectfont}%
  \ifx\svgwidth\undefined%
    \setlength{\unitlength}{202.10639587bp}%
    \ifx\svgscale\undefined%
      \relax%
    \else%
      \setlength{\unitlength}{\unitlength * \real{\svgscale}}%
    \fi%
  \else%
    \setlength{\unitlength}{\svgwidth}%
  \fi%
  \global\let\svgwidth\undefined%
  \global\let\svgscale\undefined%
  \makeatother%
  \begin{picture}(1,0.33715017)%
    \lineheight{1}%
    \setlength\tabcolsep{0pt}%
    \put(0,0){\includegraphics[width=\unitlength,page=1]{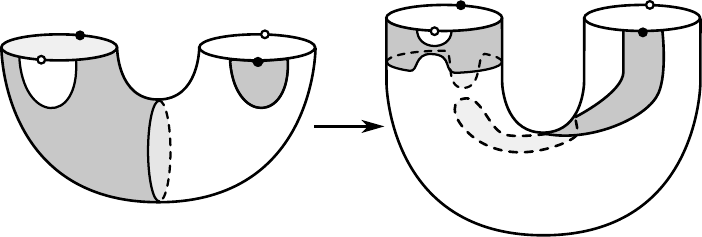}}%
    \put(0.02644108,0.31102162){\color[rgb]{0,0,0}\makebox(0,0)[lt]{\lineheight{1.25}\smash{\begin{tabular}[t]{l}$L_1$\end{tabular}}}}%
  \end{picture}%
\endgroup%

	\caption{Factoring the cobordism map in a neighborhood of a closed loop in the dividing set.}\label{fig:12}
\end{figure}

There is an action of $H_1(S^1\times S^2)$ on $\cCFL^-(S^1\times S^2,\bL)$, which extends the action of $H_1$ on the ordinary Heegaard Floer invariants \cite{OSDisks}*{Section~4.2.5}, as follows. We represent a class $\g\in H_1(S^1\times S^2)$ as an immersed curve on the Heegaard surface. Then $A_\g(\xs)$ is computed by counting holomorphic disks, weighted by the quantity $a(\g,\phi)$, which is the total change in multiplicities along $\g$ of the domain of $\phi$ across the alpha curves. We consider the case when $\g=[L_1]$. We may represent the component $L_1$ of $\bL$ also as an immersed curve on the Heegaard surface, which intersects only the alpha curves between the segment oriented from $z_1$ to $w_1$, and only the beta curves on the segment from $w_1$ to $z_1$. Hence, taking coefficients modulo 2, we obtain
\[
a(L_1,\phi)=n_{w_1}(\phi)+n_{z_1}(\phi),
\]
which implies the relation
\[
A_{[L_1]}=\scU \Phi_{w_1}+\scV \Psi_{z_1}.
\]
Compare \cite{ZemCFLTQFT}*{Lemma~14.12}.
Hence, we have
\[
\scU \Phi_{w_1}\Psi_{z_1}=(A_{[L_1]}+\scV \Psi_{z_1})\Psi_{z_1}\simeq A_{[L_1]} \Psi_{z_1},
\]
since $\Psi_{z_1}^2\simeq 0$.
In particular, the cobordism map $\scU\cdot F_{W,\cF,\frs}$ factors through the action of $[L_1]\in H_1(W)$. Since $b_1(W)=0$, any map with non-trivial twisting by $H_1(W)/\Tors$ will be null-homotopic, and hence
\[
\scU \cdot F_{W,\cF,\frs}\simeq 0.
\]
A similar argument implies $\scV\cdot F_{W,\cF,\frs}\simeq 0$.
\end{proof}

\begin{lem}\label{lem:dividing-euler-characteristic-1}
 Suppose that $S\subset B^4$ is a connected, oriented, and properly embedded surface with boundary $K\subset S^3$. Suppose that $\cF$ and $\cF'$ are two decorations of $S$, consisting of a single arc, such that $\chi(\cF_{\ws})=\chi(\cF_{\ws}')$ and $\chi(\cF_{\zs})=\chi(\cF_{\zs}')$. Then
 \[
\begin{split}
\left[\scU\cdot F_{B^4,\cF}(1)\right]&=\left[\scU \cdot F_{B^4, \cF'}(1)\right], \\
\left[\scV\cdot F_{B^4,\cF}(1)\right]&=\left[\scV \cdot F_{B^4, \cF'}(1)\right].
\end{split}
 \]
\end{lem}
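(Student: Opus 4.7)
The plan is to transform $\cF$ into $\cF'$ through a sequence of bypass moves, at each stage using the bypass relation of Figure~\ref{fig:15} in conjunction with Lemma~\ref{lem:almost-trivial-if-loop} to show that the two maps become homotopic after multiplication by $\scU$ or $\scV$. The key combinatorial observation is that at each bypass triangle connecting two single-arc configurations, the third member of the triangle can be arranged to contain a closed loop in its dividing set, which is the precise situation in which Lemma~\ref{lem:almost-trivial-if-loop} applies (using $b_1(B^4)=0$).

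First I would establish the reduction step: any two single-arc decorations of $S$ whose $\ws$-subregions have matching Euler characteristic (which forces the same for their $\zs$-subregions, since $\chi(S) = \chi(\cF_{\ws}) + \chi(\cF_{\zs})$ when the dividing set is a single arc) can be connected by a finite sequence of moves, each of which is either (i) an ambient isotopy of the dividing arc rel boundary, which does not change the cobordism map, or (ii) a local modification inside a disk $D \subset S$ of the form appearing in a bypass triangle. Standard facts from convex surface theory (Giroux--Honda) imply that separating arcs with fixed complementary topology are related by such elementary slides, and a direct local inspection of the three configurations inside the disk $D$ shows that whenever two of the three bypass configurations consist of a single arc, the third configuration necessarily contains a closed loop in its dividing set.

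Second, at each such bypass move, the bypass relation gives
\[
F_{B^4,\cF_1,\frs} + F_{B^4,\cF_2,\frs} + F_{B^4,\cF_3,\frs} \simeq 0,
\]
where $\cF_1$, $\cF_2$ are successive single-arc decorations along the reduction and $\cF_3$ contains a closed loop. By Lemma~\ref{lem:almost-trivial-if-loop}, using $b_1(B^4)=0$, we have $\scU \cdot F_{B^4,\cF_3,\frs} \simeq 0$ and $\scV \cdot F_{B^4,\cF_3,\frs} \simeq 0$, so
\[
\scU \cdot F_{B^4,\cF_1,\frs}(1) \simeq \scU \cdot F_{B^4,\cF_2,\frs}(1),
\]
and analogously for $\scV$. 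Chaining these homotopies along the sequence of bypass moves from $\cF$ to $\cF'$ yields the claimed equalities in homology.

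The main obstacle is the combinatorial input in the first step: verifying that any two single-arc decorations with matching Euler characteristics are connected by a sequence of bypass moves whose "third leg" carries a closed loop. I would handle this by classifying single-arc dividing sets on $S$ up to isotopy (a finite set once the Euler characteristic split is fixed), showing that the mapping class group action on this set is generated by elementary arc-slides, and then carrying out the local analysis of a bypass triangle to confirm that each elementary slide can be realized so that precisely one of the three configurations acquires a closed loop. Once this combinatorial fact is in hand, the bypass relation and Lemma~\ref{lem:almost-trivial-if-loop} deliver the result mechanically.
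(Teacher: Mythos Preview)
Your strategy---bypass relation plus Lemma~\ref{lem:almost-trivial-if-loop}---is exactly what the paper uses, and you have correctly identified the combinatorial reduction as the crux. The paper's implementation, however, is more concrete and sidesteps the delicate claim you make about bypass triangles.

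Rather than trying to connect single-arc decorations by bypass moves that stay within the class of single arcs, the paper reduces to the action of the mapping class group of $S$: since $\chi(\cF_{\ws})=\chi(\cF_{\ws}')$, there is a diffeomorphism $\phi$ of $S$ taking $\cA$ to $\cA'$, and it suffices to check invariance under a single Dehn twist along a non-separating curve $\gamma$. The paper then uses bypasses parallel to $\gamma$ to write $F_{B^4,\cF}$ as a sum of maps for dividing sets $\cF_i$ that meet $\gamma$ in at most two points (and which are \emph{not} assumed to be single arcs). For each such $\cF_i$, a further bypass in an annular neighborhood of $\gamma$ relates $\cA_i$ to its Dehn twist, and the third term of this bypass has $\gamma$ itself as a closed component---so Lemma~\ref{lem:almost-trivial-if-loop} applies directly.

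Your version asserts that ``whenever two of the three bypass configurations consist of a single arc, the third configuration necessarily contains a closed loop.'' This is not obviously true for an arbitrary bypass disk, and you do not justify it beyond ``direct local inspection.'' The paper avoids needing any such statement by allowing intermediate multi-arc dividing sets and engineering the bypasses so that the closed loop is the specific curve $\gamma$. If you want to salvage your route, you would need to exhibit the elementary arc-slides explicitly and verify the closed-loop claim for each; the Dehn-twist argument is a cleaner way to do exactly that.
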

\begin{proof}
Write $\cA$ for the dividing arc of $\cF$, and $\cA'$ for the dividing arc of $\cF'$. By the assumption on the Euler characteristics of the $\ws$ and $\zs$ regions, there is a diffeomorphism $\phi\colon S\to S$ which maps $\cA$ to $\cA'$. The mapping class group of $S$ is generated by non-separating Dehn twists along simple closed curves. Our proof will be to show that, after multiplying by $\scU$ or $\scV$, the cobordism maps are unchanged by replacing $\cA$ with $\phi(\cA)$, when $\phi$ is a Dehn twist along a non-separating curve.

By performing bypass relations parallel to $\g$, we may write $F_{B^4,\cF}$ as a sum of cobordism maps for dividing sets $\cF_1,\dots, \cF_n$ which each intersect $\g$ exactly 0 or 2 times. See Figure~\ref{fig:27}. Write $\cA_i$ for the dividing arcs of $\cF_i$ (which may no longer consist of a single arc). It is sufficient to show the main claim when $\cF$ consists of $S$ decorated with $\cA_i$, and $\cF'$ consists of $S$ decorated with  $\phi(\cA_i)$. We note that a bypass can be used to relate the maps induced by the dividing sets $\cA_i$ and $\phi(\cA_i)$. See Figure~\ref{fig:20}. The bypass relation involves a third curve, which has $\g$ itself as a component. This map induces the trivial map after multiplying by either $\scU$ or $\scV$, by Lemma~\ref{lem:almost-trivial-if-loop}, so the main claim follows.
 \end{proof}

\begin{figure}[H]
	\centering
\begingroup%
  \makeatletter%
  \providecommand\color[2][]{%
    \errmessage{(Inkscape) Color is used for the text in Inkscape, but the package 'color.sty' is not loaded}%
    \renewcommand\color[2][]{}%
  }%
  \providecommand\transparent[1]{%
    \errmessage{(Inkscape) Transparency is used (non-zero) for the text in Inkscape, but the package 'transparent.sty' is not loaded}%
    \renewcommand\transparent[1]{}%
  }%
  \providecommand\rotatebox[2]{#2}%
  \newcommand*\fsize{\dimexpr\f@size pt\relax}%
  \newcommand*\lineheight[1]{\fontsize{\fsize}{#1\fsize}\selectfont}%
  \ifx\svgwidth\undefined%
    \setlength{\unitlength}{231.81775852bp}%
    \ifx\svgscale\undefined%
      \relax%
    \else%
      \setlength{\unitlength}{\unitlength * \real{\svgscale}}%
    \fi%
  \else%
    \setlength{\unitlength}{\svgwidth}%
  \fi%
  \global\let\svgwidth\undefined%
  \global\let\svgscale\undefined%
  \makeatother%
  \begin{picture}(1,0.27491003)%
    \lineheight{1}%
    \setlength\tabcolsep{0pt}%
    \put(0,0){\includegraphics[width=\unitlength,page=1]{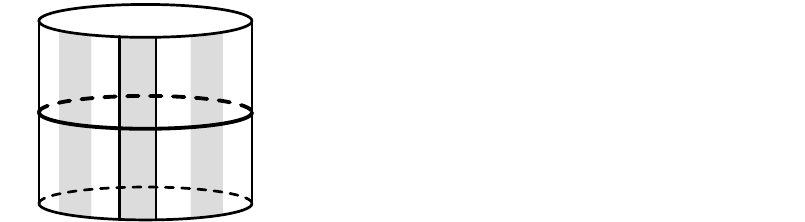}}%
    \put(0.03927011,0.13434154){\color[rgb]{0,0,0}\makebox(0,0)[rt]{\lineheight{1.25}\smash{\begin{tabular}[t]{r}$\g$\end{tabular}}}}%
    \put(0,0){\includegraphics[width=\unitlength,page=2]{fig27.pdf}}%
    \put(0.69366336,0.13245313){\color[rgb]{0,0,0}\makebox(0,0)[t]{\lineheight{1.25}\smash{\begin{tabular}[t]{c}$+$\end{tabular}}}}%
    \put(0.34854989,0.12002633){\color[rgb]{0,0,0}\makebox(0,0)[t]{\lineheight{1.25}\smash{\begin{tabular}[t]{c}$\simeq$\end{tabular}}}}%
    \put(0,0){\includegraphics[width=\unitlength,page=3]{fig27.pdf}}%
  \end{picture}%
\endgroup%

	\caption{Reducing the number of intersections of $\cA$ by using the bypass relation.}\label{fig:27}
\end{figure}

\begin{figure}[H]
	\centering
\begingroup%
  \makeatletter%
  \providecommand\color[2][]{%
    \errmessage{(Inkscape) Color is used for the text in Inkscape, but the package 'color.sty' is not loaded}%
    \renewcommand\color[2][]{}%
  }%
  \providecommand\transparent[1]{%
    \errmessage{(Inkscape) Transparency is used (non-zero) for the text in Inkscape, but the package 'transparent.sty' is not loaded}%
    \renewcommand\transparent[1]{}%
  }%
  \providecommand\rotatebox[2]{#2}%
  \newcommand*\fsize{\dimexpr\f@size pt\relax}%
  \newcommand*\lineheight[1]{\fontsize{\fsize}{#1\fsize}\selectfont}%
  \ifx\svgwidth\undefined%
    \setlength{\unitlength}{221.00710424bp}%
    \ifx\svgscale\undefined%
      \relax%
    \else%
      \setlength{\unitlength}{\unitlength * \real{\svgscale}}%
    \fi%
  \else%
    \setlength{\unitlength}{\svgwidth}%
  \fi%
  \global\let\svgwidth\undefined%
  \global\let\svgscale\undefined%
  \makeatother%
  \begin{picture}(1,0.28835746)%
    \lineheight{1}%
    \setlength\tabcolsep{0pt}%
    \put(0,0){\includegraphics[width=\unitlength,page=1]{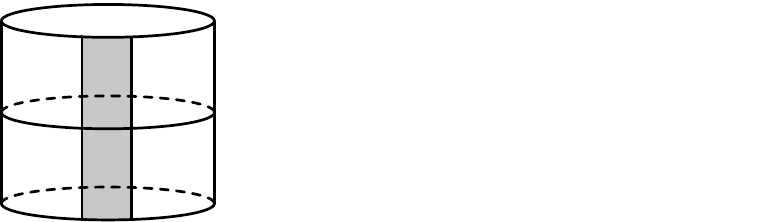}}%
    \put(0.03416612,0.09606957){\color[rgb]{0,0,0}\makebox(0,0)[lt]{\lineheight{1.25}\smash{\begin{tabular}[t]{l}$\g$\end{tabular}}}}%
    \put(0,0){\includegraphics[width=\unitlength,page=2]{fig20.pdf}}%
    \put(0.31668396,0.12567215){\color[rgb]{0,0,0}\makebox(0,0)[t]{\lineheight{1.25}\smash{\begin{tabular}[t]{c}$+$\end{tabular}}}}%
    \put(0.67867877,0.13915754){\color[rgb]{0,0,0}\makebox(0,0)[t]{\lineheight{1.25}\smash{\begin{tabular}[t]{c}$\simeq$\end{tabular}}}}%
  \end{picture}%
\endgroup%

	\caption{A bypass relation gives the effect of a Dehn twist along a curve $\g$ on the cobordism map.}\label{fig:20}
\end{figure}

\begin{lem}\label{lem:neighborhood-stabilization} Let $T\subset B^4$ be a properly embedded, genus 1 surface with boundary equal to an unknot $U$ in $S^3$. If $\cT$ is a decoration of $T$ by a dividing set $\cA$, then
\[
F_{B^4,\cT}=\begin{cases} \scU& \text{ if } |\cA|=1, \quad  g(\cT_{\ws})=1,\text{and} \quad g(\cT_{\zs})=0,\\
\scV& \text{ if }|\cA|=1,\quad g(\cT_{\zs})=1,\text{and} \quad g(\cT_{\ws})=0,\\
0& \text{ otherwise}.
\end{cases}
\]
\end{lem}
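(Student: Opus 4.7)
My plan splits into cases according to whether the dividing set $\cA$ contains a closed loop. First observe that $\cA$ meets $\partial T = U$ in exactly two points, since the basepoints $w$ and $z$ must lie in distinct components of $U \setminus \cA$; hence exactly one component of $\cA$ is a boundary-parallel arc, and all remaining components are closed loops. If $|\cA| \geq 2$, at least one closed loop is present. Applying Lemma~\ref{lem:almost-trivial-if-loop} to $W = B^4$ (for which $b_1(W) = 0$) yields $\scU \cdot F_{B^4, \cT} \simeq 0$ and $\scV \cdot F_{B^4, \cT} \simeq 0$, and since $\cCFK^-(S^3, U) \cong \bF[\scU, \scV]$ is torsion-free as a module over itself, this forces $F_{B^4, \cT} \simeq 0$.

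If $|\cA| = 1$, an inclusion–exclusion Euler characteristic computation, combined with the observation that each subregion has a single boundary circle, forces $g(\cT_{\ws}) + g(\cT_{\zs}) = 1$; so one subregion is a once-punctured torus and the other is a disk. By the $w \leftrightarrow z$ symmetry I would reduce to $g(\cT_{\ws}) = 1$. I would then build a model $(T_0, \cT_0)$ by starting with a standard slice disk $D$ for $U$ in $B^4$ decorated with a single dividing arc; a direct Heegaard diagram computation on an unknotted arc shows $F_{B^4, \cD}(1) = 1 \in \bF[\scU, \scV] \cong \cCFK^-(S^3, U)$. Then stabilize by a 3-dimensional 1-handle in $B^4$ with both feet in $\cD_{\ws}$. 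By Lemma~\ref{lem:stabilization},
\[
F_{B^4, \cT_0}(1) \;=\; \scU \cdot F_{B^4, \cD}(1) \;=\; \scU.
\]
To transfer to an arbitrary $(T, \cT)$, I would invoke the fact that any two smooth properly embedded genus 1 surfaces in $B^4$ with boundary the unknot are smoothly isotopic rel boundary, so one may identify $T$ with $T_0$. Both $\cT$ and $\cT_0$ are then decorations of $T_0$ with single arcs and identical Euler characteristics of the two subregions, so Lemma~\ref{lem:dividing-euler-characteristic-1} gives
\[
\scU \cdot F_{B^4, \cT}(1) \;=\; \scU \cdot F_{B^4, \cT_0}(1) \;=\; \scU^2,
\]
and hence $F_{B^4, \cT}(1) = \scU$ by $\scU$-torsion-freeness of $\cCFK^-(S^3, U)$. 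The subcase $g(\cT_{\zs}) = 1$ is symmetric, using a tube attached in the $\zs$-region and yielding $\scV$.

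The most delicate step is the topological claim that all smooth genus 1 surfaces with boundary $U$ are isotopic rel boundary in $B^4$; while plausible from $\pi_1(B^4) = 0$ and the classification of surfaces, a careful justification is needed. An alternative that avoids this is to argue directly that $(T, \cT)$ is obtained, up to a diffeomorphism preserving the decoration, from a slice disk decoration by a 1-handle stabilization: find a nonseparating simple closed curve on $T$ disjoint from $\cA$ whose surgery produces a disk, then apply Lemma~\ref{lem:stabilization} in place, bypassing the passage through a preferred model.
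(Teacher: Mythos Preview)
Your treatment of the case $|\cA|\ge 2$ is correct and in fact cleaner than the paper's argument: you observe that any extra component of $\cA$ must be a closed loop, invoke Lemma~\ref{lem:almost-trivial-if-loop}, and then use torsion-freeness of $\cCFK^-(S^3,U)\cong\bF[\scU,\scV]$ to conclude $F_{B^4,\cT}=0$. The paper instead handles this case by a finer case split---according to whether the boundary arc $a$ is boundary-parallel or essential, and in the essential case whether the remaining closed curves are null-homotopic or parallel to $a$---finishing with either a factorization through $\Phi_w$ or $\Psi_z$, or a grading computation.

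Your approach to $|\cA|=1$, however, has a genuine gap. You reduce to a standard model via the claim that any two smooth genus~$1$ surfaces in $B^4$ bounding the unknot are isotopic rel boundary. This is not an elementary fact: while a nonseparating curve on $T$ bounds a singular disk in $B^4$ because $\pi_1(B^4)=0$, a $2$-disk and a $2$-surface in a $4$-manifold meet generically in points, so general position does not produce an embedded compressing disk disjoint from $T$. Your proposed alternative---find a nonseparating curve on $T$ disjoint from $\cA$ and apply Lemma~\ref{lem:stabilization} in reverse---runs into exactly the same obstruction, since that lemma requires an embedded $3$-dimensional $1$-handle in the ambient $4$-manifold, not merely an abstract surgery on the surface.

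The paper sidesteps this by citing \cite{ZemAbsoluteGradings}*{Theorem~1.8} for the first two cases. The mechanism there is insensitive to the embedding of $T$: the grading formulas~\eqref{eq:gr-w-grading} and~\eqref{eq:gr-z-grading} force $F_{B^4,\cT}(1)$ into bigrading $(-2,0)$ (respectively $(0,-2)$), so it is either $0$ or $\scU$ (respectively $\scV$); nonvanishing then follows because the cobordism map becomes an isomorphism on homology after inverting $U$. You could close the gap by arguing along these lines instead of through a surface-isotopy claim.
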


\begin{proof}The first two formulas follow  from \cite{ZemAbsoluteGradings}*{Theorem~1.8}, so we focus on the third. Consider the arc $a$ which has its endpoints on the unknot $U$. If $a$ bounds a boundary parallel bigon, and there is another component of the dividing set, then the induced cobordism map vanishes, since we may isotope the additional dividing curve towards the boundary parallel bigon, and factor the cobordism map through one of the endomorphisms $\Phi_w$ or $\Psi_z$, which clearly vanish on $\cCFK^-(S^3,U)$. See Figure~\ref{fig:22} for the interpretation of $\Phi_w$ and $\Psi_z$ as dividing sets.

The remaining case to consider is that $a$ does not bound a boundary parallel bigon, and instead represents a non-trivial element of $H_1(T,U)$. In this case, there must be at least one more curve in the dividing set, since $T\setminus a$ is connected. If there are any null-homotopic components of the dividing set, then the induced map is zero, by direct computation (see the definitions of the maps in \cite{ZemCFLTQFT}*{Section~4.1}). The remaining possibility is that there are an odd number of additional components of $\cA$, which are all parallel to $a$. In this case, $\cF_{\ws}$ and $\cF_{\zs}$ are both the disjoint union of equal numbers of annuli, so~\eqref{eq:gr-w-grading} and~\eqref{eq:gr-z-grading} imply that $F_{B^4,\cT}(1)$ has $(\gr_{\ws},\gr_{\zs})$-bigrading $(-1,-1)$. However the subset of $\cCFK^-(S^3,U)=\bF[\scU,\scV]$ in this grading is 0, so the induced element must be trivial.
\end{proof}

\begin{prop}
\label{prop:main-claim-dividing-set-ind}
Suppose that $W$ is a compact, oriented 4-manifold with boundary $S^3$, such that $b_1(W)=0$, and $S\subset W$ is a connected, properly embedded surface with boundary $K$, such that $S$ is the stabilization of a surface $S_0$ (i.e. $S=S_0\# T$ for an unknotted torus $T$). If $\cF$ and $\cF'$ are two decorations of $S$, both consisting of a single arc such that $\chi(\cF_{\ws})=\chi( \cF_{\ws}')$ and $\chi(\cF_{\zs})=\chi(\cF_{\zs}')$, then
 \[
 [F_{W,\cF,\frs}(1)]=[F_{W,\cF',\frs}(1)],
 \]
 for any $\frs\in \Spin^c(W)$.
\end{prop}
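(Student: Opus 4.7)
The approach is to combine a version of Lemma~\ref{lem:dividing-euler-characteristic-1} valid on $W$ with a factorization of the cobordism map through a neighborhood of the stabilization torus $T$. The sharp vanishing provided by Lemma~\ref{lem:neighborhood-stabilization} on the torus will then absorb the $\scU$ and $\scV$ multipliers that Lemma~\ref{lem:dividing-euler-characteristic-1} otherwise leaves behind.

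I would first observe that the argument of Lemma~\ref{lem:dividing-euler-characteristic-1} goes through verbatim with $B^4$ replaced by any compact oriented $W$ with $\partial W = S^3$ and $b_1(W) = 0$, since it relies only on the mapping-class-group presentation of $S$ by Dehn twists, bypass relations, and Lemma~\ref{lem:almost-trivial-if-loop}. I plan to apply this extended statement both to $(W, S)$ itself and to the restricted cobordism $(W_0, S_0')$ introduced in the next step.

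Next, I would use the stabilization to factor the cobordism. Choose a small ball $B \subset \Int(W)$ containing $T$ together with the connect-sum tube, and set $W_0 := W \setminus \Int(B)$. Then $\partial B \cap S$ is an unknot $U \subset S^3 = \partial B$, $S \cap B = T'$ is a once-punctured genus-one surface bounding $U$, and $S \cap W_0 = S_0'$ is $S_0$ minus an open disk. The TQFT composition law gives
\[
F_{W, \cF, \frs}(1) = F_{W_0, \cF_0, \frs|_{W_0}}\!\bigl(F_{B, \cT}(1)\bigr),
\]
where $\cF_0$ and $\cT$ are the restrictions of $\cF$ to $S_0'$ and $T'$. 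Using bypass relations parallel to $U$ (cf.\ Figure~\ref{fig:27}), I would reduce to the case $|\cA \cap U| \le 2$; each correction term introduced in this process contains a closed dividing loop localized on $T'$, so by Lemma~\ref{lem:neighborhood-stabilization} the $B$-factor of the correction vanishes \emph{outright}, not merely modulo $\scU, \scV$-torsion. After this reduction, Lemma~\ref{lem:neighborhood-stabilization} gives $F_{B, \cT}(1) = X$ for some $X \in \{0, \scU, \scV\}$ determined purely by the Euler-characteristic partition of $\cT$.

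Performing the same reduction for $\cF'$ and invoking the Euler-characteristic hypothesis, the induced $T'$-decorations share the same Euler-characteristic partition and so contribute the same factor $X$; meanwhile, $\cF_0$ and $\cF_0'$ inherit matching Euler-characteristic partitions on $S_0'$. If $X = 0$, both sides of the claim vanish; otherwise, applying the extended Lemma~\ref{lem:dividing-euler-characteristic-1} to $(W_0, S_0')$ yields $X \cdot F_{W_0, \cF_0, \frs|_{W_0}}(1) = X \cdot F_{W_0, \cF_0', \frs|_{W_0}}(1)$, which is precisely $[F_{W, \cF, \frs}(1)] = [F_{W, \cF', \frs}(1)]$. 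The main obstacle lies in the bypass reduction: one must arrange every spurious closed dividing loop produced by the bypasses to lie on $T'$, so that Lemma~\ref{lem:neighborhood-stabilization} annihilates them exactly instead of only after multiplication by $\scU$ or $\scV$. This is where the stabilization hypothesis is used decisively, as it furnishes a preferred local model (the unknotted torus in $B$) into which all bypass debris can be pushed.
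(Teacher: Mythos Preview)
Your outline has a genuine gap in the bypass reduction step, and the approach diverges from the paper's in a way that matters.

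The central problem is the claim that after bypass relations parallel to $U$ reduce $|\cA\cap U|$ to $2$, all the ``correction terms'' carry a closed dividing loop on $T'$ and hence vanish by Lemma~\ref{lem:neighborhood-stabilization}. This is not what the bypass relation does. Each bypass move replaces one dividing set by a \emph{sum of two} dividing sets (Figure~\ref{fig:27}), and in general neither summand contains a closed loop. Concretely: if $\cA$ meets $U$ in four points $p_1,p_2,p_3,p_4$ and the two arcs of $\cA\cap T'$ pair them as $\{p_1,p_3\}$ and $\{p_2,p_4\}$ (possible because $T'$ has positive genus), then a bypass at $\{p_1,p_2\}$ on the $T'$-side merges two distinct arcs into one arc from $p_3$ to $p_4$---no closed loop arises. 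Iterating, you end up with an honest \emph{sum} $\sum_i F_{W,\cF_i,\frs}$ of several nonvanishing terms, not a single one. You therefore cannot conclude that $\cF$ and $\cF'$ each reduce to a single decoration with a well-defined factor $X$ and a well-defined residual $\cF_0$ on $S_0'$, so the subsequent comparison collapses. (There is a secondary issue: Lemma~\ref{lem:dividing-euler-characteristic-1} is stated for a surface with one boundary component, whereas $S_0'$ has two, and the dividing set of each $\cF_i$ restricted to $S_0'$ need not be a single arc; both points would need separate treatment.)

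The paper's proof avoids direct comparison of $\cF$ and $\cF'$ altogether. Instead it shows invariance of $[F_{W,\cF,\frs}(1)]$ under a single Dehn twist along each Lickorish generator, treating the generators $c_1,c_2$ supported in $T$ separately from $c_3,\dots,c_{3g-1}$. For $c_1,c_2$ one does perform the bypass reduction to a sum of $\cF_i$ with $|\cA_i\cap\partial T|\le 2$, but the conclusion drawn is only that each \emph{surviving} $\cF_i$ is isotopic to one disjoint from $T$ (by Lemma~\ref{lem:neighborhood-stabilization}), hence fixed by twists along $c_1,c_2$. For the remaining generators, one uses that the surviving $\cF_i$ can be destabilized to $S_0$, picking up a factor of $\scU$ or $\scV$ (Lemma~\ref{lem:stabilization}), and then Lemma~\ref{lem:dividing-euler-characteristic-1} is applied on $S_0$ (not on $S_0'$)---the single boundary component is preserved, and the multiplicative factor is exactly what that lemma needs. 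The key organizational point is that invariance is proved twist-by-twist on each summand, never by matching summands for $\cF$ against summands for $\cF'$.
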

\begin{proof}
Our proof will be to show the cobordism maps for $S$ are unchanged by applying a Dehn twist to the dividing set along a non-separating curve on $S$. We may restrict our attention to Dehn twists along one of the $3g-1$ Lickorish generators of the mapping class group. We  assume that the first three generators, $c_1,$ $c_2$, and $c_3$, are chosen to intersect the stabilization torus $T$, while all the other generators are disjoint. See Figure~\ref{fig:21}.

\begin{figure}[H]
	\centering
\begingroup%
  \makeatletter%
  \providecommand\color[2][]{%
    \errmessage{(Inkscape) Color is used for the text in Inkscape, but the package 'color.sty' is not loaded}%
    \renewcommand\color[2][]{}%
  }%
  \providecommand\transparent[1]{%
    \errmessage{(Inkscape) Transparency is used (non-zero) for the text in Inkscape, but the package 'transparent.sty' is not loaded}%
    \renewcommand\transparent[1]{}%
  }%
  \providecommand\rotatebox[2]{#2}%
  \newcommand*\fsize{\dimexpr\f@size pt\relax}%
  \newcommand*\lineheight[1]{\fontsize{\fsize}{#1\fsize}\selectfont}%
  \ifx\svgwidth\undefined%
    \setlength{\unitlength}{259.50758359bp}%
    \ifx\svgscale\undefined%
      \relax%
    \else%
      \setlength{\unitlength}{\unitlength * \real{\svgscale}}%
    \fi%
  \else%
    \setlength{\unitlength}{\svgwidth}%
  \fi%
  \global\let\svgwidth\undefined%
  \global\let\svgscale\undefined%
  \makeatother%
  \begin{picture}(1,0.24088849)%
    \lineheight{1}%
    \setlength\tabcolsep{0pt}%
    \put(0,0){\includegraphics[width=\unitlength,page=1]{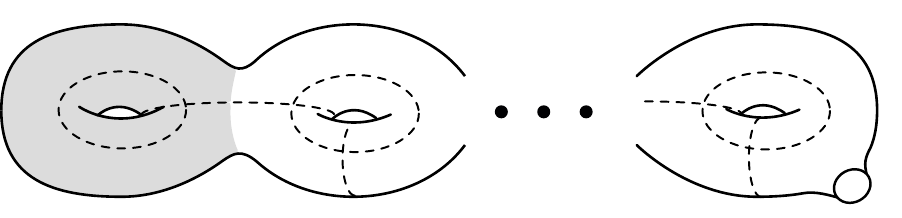}}%
    \put(0.13126583,0.04491574){\color[rgb]{0,0,0}\makebox(0,0)[lt]{\lineheight{1.25}\smash{\begin{tabular}[t]{l}$c_1$\end{tabular}}}}%
    \put(0.16944284,0.16314759){\color[rgb]{0,0,0}\makebox(0,0)[lt]{\lineheight{1.25}\smash{\begin{tabular}[t]{l}$c_2$\end{tabular}}}}%
    \put(0,0){\includegraphics[width=\unitlength,page=2]{fig21.pdf}}%
    \put(0.24295237,0.10515857){\color[rgb]{0,0,0}\makebox(0,0)[lt]{\lineheight{1.25}\smash{\begin{tabular}[t]{l}$c_3$\end{tabular}}}}%
    \put(0.97076798,0.00271698){\color[rgb]{0,0,0}\makebox(0,0)[lt]{\lineheight{1.25}\smash{\begin{tabular}[t]{l}$K$\end{tabular}}}}%
    \put(0.06700164,0.22902709){\color[rgb]{0,0,0}\makebox(0,0)[lt]{\lineheight{1.25}\smash{\begin{tabular}[t]{l}$T$\end{tabular}}}}%
  \end{picture}%
\endgroup%

	\caption{Lickorish generators of the mapping class group of a genus $g$ surface with one boundary component. The shaded region is the stabilization torus.}\label{fig:21}
\end{figure}

First, we consider Dehn twists along one of the curves $c_1$ and $c_2$. By using the bypass relation in the neck region of the stabilization, as in Figure~\ref{fig:27}, we may write $F_{W,\cF,\frs}$ as a sum of $n$ maps $F_{W,\cF_i,\frs}$, where $\cF_i$ has a dividing set which intersects the boundary of the stabilization torus in 0 or 2 points.  It is sufficient to show that each the maps $F_{W,\cF_i,\frs}$ is unchanged by a Dehn twist along $c_1$ or $c_2$. By factoring the cobordism map through a neighborhood of the stabilization, we may apply Lemma~\ref{lem:neighborhood-stabilization} to see that the map $F_{W,\cF_i,\frs}$ is zero unless $\cA_i\cap T$ is empty, or $\cA_i\cap T$ is a connected arc and one of $T\cap \cF_{i,\ws}$ and $T\cap \cF_{i,\zs}$ is a bigon, while the other is a genus 1 surface. In particular, we may assume that $\cF_i$ is isotopic to a dividing set which is disjoint from the stabilization torus. In particular, if a summand $F_{W,\cF_i,\frs}$ is non-zero, then the dividing set is fixed up to isotopy by Dehn twists along $c_1$ or $c_2$.

We now consider the remaining generators, $c_3,\dots, c_{3g-1}$, of the mapping class group. Using the previous argument, it is sufficient to show the claim for dividing sets $\cF_i$ which are disjoint from the stabilization region. We may view the dividing set $\cF_i$ as being obtained by attaching a 1-handle to a decoration $\cF_{i,0}$ of $S_0$, such that the feet are attached to either the $\ws$ subregion or the $\zs$ subregion. For the sake of demonstration, assume that this 1-handle stabilization occurs in the $\ws$ subregion. By Lemma~\ref{lem:stabilization},
\begin{equation}
F_{W,\cF_i,\frs}=\scU\cdot F_{W,\cF_{i,0},\frs},
\label{eq:destabilize-1}
\end{equation}
 Write $A\subset S$ for a regular neighborhood of the curve $c_1$. We view $A$ as the region added by a 1-handle stabilization, attached to $S_0$. Note that $A$ is disjoint from all of the mapping class group generators, except for $c_1$ and $c_2$. Suppose $\g$ is one of $c_3,\dots, c_{3g-1}$. Abusing notation, write $\phi$ for the Dehn twist along $\gamma$ on both $S$ and $S_0$. Since $\g$ is disjoint from $A$, we have that $F_{W,\phi(\cF_{i}),\frs}$ also has a 1-handle stabilization in the $\ws$ subregion, and hence
\begin{equation}
F_{W,\phi(\cF_i),\frs}=\scU\cdot F_{W,\phi(\cF_{i,0}),\frs}.
\label{eq:destabilize-2}
\end{equation}
 Using Lemma~\ref{lem:dividing-euler-characteristic-1}, we see that~\eqref{eq:destabilize-1} and~\eqref{eq:destabilize-2} imply the main claim.
\end{proof}

\subsection{Proof of Theorem~\ref{thm:Y-genus-clasp}}

We now prove our genus and clasp bounds for $Y_n(K)$:
\begin{proof}[Proof of Theorem~\ref{thm:Y-genus-clasp}]
We begin with part~\eqref{subthm:Y-genus}. Suppose that $0\le n\le g_4(K)$, and let $S\subset B^4$ be a slice surface for $K$, with genus $g$. We may assume that $g-n$ is even, since if it were odd, we could increase $g$ by 1 without changing $\lceil (g-n)/2\rceil$.

 Let $\cF_{-n},\cF_{-n+2},\dots,\cF_{n-2}, \cF_n$ denote decorations of $S$, each consisting of a single arc which divides $S$ into two connected subsurfaces. Furthermore,
we assume that 
\[
g(\cF_{i,\ws})=\frac{g-i}{2}\quad \text{and} \quad g(\cF_{i,\zs})=\frac{g+i}{2},
\]
which is possible to arrange, since $g$ and $n$ have the same parity. 
 Define
 \[
z_i=F_{B^4,\cF_{i}}(1), 
 \]
 for $i\in \{-n,-n+2,\dots, n-2,n\}$. Note that $z_i$ has bigrading $(-g+i,-g-i)$. In particular, $z_i$ has Alexander grading  $i$.
 
  Proposition~\ref{prop:main-claim-dividing-set-ind} implies that for all $i\in \{-n+1,-n+3,\dots, n-1\}$, there is a chain $z_i$, of homogeneous grading $(-g+i,-g-i)$, which satisfies
 \[
\d z_i= \scU\cdot z_{i+1}+\scV\cdot z_{i-1}.
 \]
  The element
 \[
y=\sum_{i=-n}^{n} z_i\otimes x_{-i} 
 \]
is a cycle in $\cC_K\otimes \cS_{n}^\vee$ which is $\bF[U]$-non-torsion, since each $x_{-n+2i}$ is $\bF[U]$-non-torsion. Furthermore, $y$ has $(\gr_{\ws},\gr_{\zs})$-bigrading $(n-g,n-g),$ since $x_i$ has grading $(n+i,n-i)$. In particular, $y\in \scA_0(\cC_K\otimes \cS_n^{\vee})$ since it has Alexander grading zero. As $y$ is $\bF[U]$ non-torsion, we have
\[
V_0(\cC_K\otimes \cS_n^{\vee})\le \frac{g-n}{2}=\left \lceil \frac{g-n}{2} \right \rceil,
\]
which completes the proof.

We now consider part~\eqref{subthm:Y-clasp}, concerning $c^+(K)$. Suppose that $S$ is a surface with $C^+$ positive double points, and $C^-$ negative double points. We blow up at the negative double points, and smooth the positive double points to obtain a genus $C^+$, null-homologous slice  surface $\hat{S}$ for $K$ in $B^4\# C^- \bar{\CP}^2$. As before, we may assume, by stabilizing once if necessary, that $C^+-n$ is even. We pick any $\Spin^c$ structure on $B^4\# C^-\bar{\CP}^2$ with maximal square, as well as decorations $\cF_{-n},\cF_{-n+2},\dots, \cF_{n}$ of $\hat{S}$, as above, and consider the elements $z_i=F_{B^4\# C^-\bar{\CP}^2, \cF_i, \frs}$. The same argument as for surfaces in $B^4$ shows that we obtain an $\bF[U]$-non-torsion cycle $y$ in $\cC_K\otimes \cS_n^\vee$ of bigrading $(n-C^+ ,n-C^+ )$, so we obtain the stated inequality involving $Y_n(K)$.
\end{proof}

\section{Involutive invariants}

In this Section, we review Hendricks and Manolescu's involutive Heegaard Floer homology, and prove Theorem~\ref{thm:involutive}
from the introduction:

\begin{thm} Suppose $K$ is a knot in $S^3$. Then the following hold:
\begin{equation}\label{thm:involutive-genus}
-\left\lceil \frac{g_4(K)+1}{2}\right\rceil \le \bar{V}_0(K)\le \ul{V}_0(K)\le \left \lceil \frac{g_4(K)+1}{2} \right \rceil,
\end{equation}
\begin{equation}\label{thm:involutive-clasp}
-\left\lceil \frac{c_4^-(K)+1}{2}\right\rceil \le \bar{V}_0(K)\le \ul{V}_0(K)\le \left \lceil \frac{c_4^+(K)+1}{2} \right \rceil.
\end{equation}
\end{thm}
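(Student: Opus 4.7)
The plan is to adapt the technique from the proof of Theorem~\ref{thm:Y-genus-clasp} to the involutive setting. Given a slice surface $S \subset B^4$ of genus $g$ for $K$, I will construct a $U$-non-torsion cycle in the involutive complex $AI^-_0(K)$---that is, the mapping cone of $Q(1+\iota_K)$ on $A^-_0(K)$---whose homological grading is controlled by $g$. The bounds on $\ul{V}_0(K)$ and $\bar{V}_0(K)$ then follow, since by definition these invariants record the extremal Maslov gradings of $U$-non-torsion cycles in this mapping cone.

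The crucial new input, beyond the TQFT machinery used in Section~\ref{sec:bounds-and-Y}, is the compatibility of the involution $\iota_K$ with decorated cobordism maps: $\iota_K$ intertwines $F_{B^4,\cF}$ with $F_{B^4,\bar{\cF}}$, where $\bar{\cF}$ denotes the conjugate decoration obtained by swapping the $\ws$- and $\zs$-subregions (combined with the swap $\scU \leftrightarrow \scV$ on the input). Assuming this, take a decoration $\cF$ of $S$ whose single dividing arc splits $S$ into subsurfaces of genera $\lceil g/2 \rceil$ and $\lfloor g/2 \rfloor$, and set $z := F_{B^4,\cF}(1) \in A_0(K)$. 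Equations~\eqref{eq:gr-w-grading} and~\eqref{eq:Alexander-grading} show that $z$ is $\bF[U]$-non-torsion, lies in Alexander grading $0$, and has a computable $\gr_{\ws}$-grading of order $-g$. Now apply Proposition~\ref{prop:main-claim-dividing-set-ind} together with Lemma~\ref{lem:stabilization}: after a single stabilization in the $\ws$- or $\zs$-subregion (which matches the Euler characteristics of $\cF$ and $\bar{\cF}$ at the cost of multiplying by $\scU$ or $\scV$), the cobordism maps for these decorations agree in homology. This produces an explicit chain $w$ with $\d w = \scU\cdot(1+\iota_K)z$, and hence promotes $z$ to a cycle $z + Q\cdot w$ in $AI^-_0(K)$. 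Tracking the grading through the $Q$-shift in the mapping cone yields the upper bound $\ul{V}_0(K) \le \lceil (g+1)/2 \rceil$.

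For the lower bound $-\lceil (g+1)/2 \rceil \le \bar{V}_0(K)$, I would apply the above construction to the mirror and invoke the mirroring relation $\bar{V}_0(K) = -\ul{V}_0(-K)$. For the clasp-number bounds in part~(2), I would repeat the blow-up trick from the proof of Theorem~\ref{thm:Y-genus-clasp}\eqref{subthm:Y-clasp}: given a normally immersed disk in $B^4$ with $c_4^+(K)$ positive and $c_4^-(K)$ negative double points, blow up the negative double points and smooth the positive ones to obtain a genus $c_4^+(K)$ null-homologous surface in $B^4 \# c_4^-(K)\,\overline{\CP}^2$, choose a $\Spin^c$ structure of maximal square, and rerun the argument above to obtain the upper bound involving $c_4^+(K)$. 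The asymmetry in the statement between $c_4^+$ and $c_4^-$ reflects the asymmetry between the upper invariant $\ul{V}_0$ and the lower invariant $\bar{V}_0$; the dual bound involving $c_4^-(K)$ follows by applying the same argument to $-K$.

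The main obstacle will be pinning down the conjugation naturality of $\iota_K$ with the decorated cobordism maps in the precise form required---in particular, producing an explicit null-homotopy of $\scU\cdot(1+\iota_K)z$ whose grading can be tracked exactly---and then correctly computing the homological grading of the resulting cycle in the mapping cone. A secondary subtlety is parity: when $g$ is even the splitting into $\ws$- and $\zs$-pieces is genuinely symmetric, while when $g$ is odd the extra $\scU$- or $\scV$-multiplication coming from Lemma~\ref{lem:stabilization} and Proposition~\ref{prop:main-claim-dividing-set-ind} is precisely what produces the $+1$ in the ceiling $\lceil (g+1)/2 \rceil$; the argument must handle both parities uniformly.
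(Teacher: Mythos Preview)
Your overall strategy matches the paper's: use the conjugation relation $\iota_K\circ F_{B^4,\cF}\simeq F_{B^4,\bar\cF}\circ\iota_0$ together with Proposition~\ref{prop:main-claim-dividing-set-ind} to build a chain map from $\CFI^-(\emptyset)$ into $AI_0(K)$. However, two steps in your execution do not work as written.

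First, the sentence ``$\d w=\scU\cdot(1+\iota_K)z$, and hence promotes $z$ to a cycle $z+Q\cdot w$'' is incorrect. For $z+Qw$ to be a cycle in the cone one needs $\d w=(1+\iota_K)z$ on the nose, and the extra $\scU$ cannot be absorbed: $\iota_K$ is skew-equivariant, so $\scU(1+\iota_K)z\neq(1+\iota_K)(\scU z)$, and in any case $\scU z$ has left $\scA_0$. Relatedly, your element $z=F_{B^4,\cF}(1)$ lies in $\scA_0$ only when $g$ is even; when $g$ is odd the genera $\lceil g/2\rceil,\lfloor g/2\rfloor$ differ and $z$ lands in $\scA_{\pm 1}$. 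The paper resolves both issues at once: it stabilizes $S$ to a surface $S'$ of \emph{even} genus (first making $g(S)$ odd if necessary, then adding one tube) and takes $\cF'$ with \emph{equal} genera on the two sides. Then $\cF'$ and $\bar\cF'$ have identical Euler characteristics, $S'$ is genuinely a stabilization, and Proposition~\ref{prop:main-claim-dividing-set-ind} gives $F_{B^4,\cF'}\simeq F_{B^4,\bar\cF'}$ with no $\scU$ or $\scV$ factor. The skew-equivariant homotopy $H$ then assembles into the $2\times 2$ chain map $\bigl(\begin{smallmatrix}F_{\cF'}&0\\ H&F_{\cF'}\end{smallmatrix}\bigr)$, which lowers grading by exactly $g(S)+1$ and yields $\ul V_0(K)\le\lceil(g_4(K)+1)/2\rceil$.

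Second, for the clasp bound, ``rerun the argument'' is not enough. After blowing up the negative double points you are working in $W=B^4\#\,n\,\bar\CP^2$, where the conjugation relation reads $\iota_K\circ F_{W,\cF',\frs}\simeq F_{W,\bar\cF',\bar\frs}\circ\iota_0$: the $\Spin^c$ structure is conjugated as well. Since the two maximal-square $\Spin^c$ structures on each $\bar\CP^2$ summand are exchanged by conjugation, one needs a further step comparing $F_{W,\cF',\frs}$ with $F_{W,\cF',\bar\frs}$. The paper handles this with a separate computation in the link Floer complex of the negative Hopf link (Lemma~\ref{lem:Hopf-CP2}), together with the stabilization factor from Lemma~\ref{lem:stabilization}; your sketch omits this entirely. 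Finally, for the lower bound on $\bar V_0$ the paper does not quote a mirroring identity $\bar V_0(K)=-\ul V_0(-K)$ (which would itself require proof) but instead turns the cobordism around to produce a map $AI_0(K)\to\CFI^-(\emptyset)$ directly.
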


\subsection{Background on involutive Heegaard Floer homology}

Involutive Heegaard Floer homology is a refinement of Heegaard Floer homology described by Hendricks and Manolescu \cite{HMInvolutive}. If $Y$ is a 3-manifold equipped with a self-conjugate $\Spin^c$ structure $\frs$, they constructed an $\bF[U,Q]/Q^2$-module $\HFI^-(Y,\frs)$. Their construction gives two involutive correction terms, $\bar{d}(Y,\frs)$ and $\underline{d}(Y,\frs)$, which satisfy
\[
\underline{d}(Y,\frs)\le d(Y,\frs)\le \bar{d}(Y,\frs),
\]
where $d$ is the ordinary correction term of Ozsv\'{a}th and Szab\'{o} \cite{OSIntersectionForms}.

For a knot $K$ in $S^3$, Hendricks and Manolescu defined two integer invariants $\bar{V}_0(K)$ and $\ul{V}_0(K)$, which satisfy
\[
\bar{V}_0(K)\le V_0(K)\le \ul{V}_0(K).
\]

 The invariants $\bar{V}_0(K)$ and $\ul{V}_0(K)$ may be computed directly from the knot Floer complex, as we now describe. Hendricks and Manolescu describe a knot involution
 \[
\iota_K\colon \CFK^\infty(K)\to \CFK^\infty(K). 
 \]
 For our present purposes, it is helpful to actually consider the knot involution $\iota_K$ as an endomorphism of the complex $\cCFK^-(K)$. On $\cCFK^-(K)$, the map $\iota_K$ satisfies
 \begin{equation}
\scU \iota_K=\iota_K \scV\quad \text{and} \quad \scV\iota_K=\iota_K \scU.\label{eq:skew-equivariant}
 \end{equation}
More generally, we say a map which satisfies~\eqref{eq:skew-equivariant} is \emph{skew-equivariant}. Furthermore, the map $\iota_K$ interchanges the gradings $\gr_{\ws}$ and $\gr_{\zs}$.

The subcomplex $A_0(K)\subset \CFK^\infty(K)$ is preserved by $\iota_K$. Hendricks and Manolescu define
\[
AI_0(K):=\Cone(A_0(K)\xrightarrow{Q(1+\iota_K)} QA_0(K)).
\]
We give $AI_0(K)$ the grading induced by $\CFK^\infty(K)$ and by setting $\deg(Q)=-1$. The invariants $\bar{V}_0(K)$ and $\ul{V}_0(K)$ are defined as
\[
\bar{V}_0(K):=-\frac{1}{2} \bar{d}(AI_0(K))\quad \text{and} \quad \ul{V}_0(K):=-\frac{1}{2} \ul{d}(AI_0(K)).
\]

\subsection{Involutive correction terms and the slice genus}

In this section, we prove equation~\eqref{thm:involutive-genus} of Theorem~\ref{thm:involutive}, namely that the involutive correction terms satisfy the stated slice genus bound. We prove equation~\eqref{thm:involutive-clasp} in the subsequent sections.

\begin{proof}[Proof of equation~\eqref{thm:involutive-genus} of Theorem~\ref{thm:involutive}]
Suppose that $S\subset B^4$ is an oriented slice surface in $B^4$ for the knot $K$. We may assume that $g(S)$ is odd, since if $g(S)$ is even, then we may stabilize $g(S)$ by taking the connected sum with an unknotted torus without increasing the quantity $\lceil (g(S)+1)/2\rceil$.

We let $S'$ be obtained by stabilizing $S$ once, taking the connected sum with an unknotted torus. Note that $g(S')$ is even. We pick a dividing set $\cA$ on $S'$, consisting of one arc which divides $S'$ into two components, both of which have the same genera. Furthermore, we may assume that the stabilization occurs entirely in the $\ws$ subregion. Write $\cF'$ for this dividing set on $S'$, and let $\cF$ denote the induced dividing set on $S$. Let $\bar{\cF}'$ denote the decorated surface obtained by reversing the roles of the $\ws$ and $\zs$ subregions, and adding a half-twist to the dividing set along $\d S'$. We define $\bar{\cF}$ similarly to be the corresponding decoration of $S$.

We consider the map
\[
F_{B^4,\cF'}\colon \bF[\scU,\scV]\to \cCFK^-(K),
\]
which preserves the Alexander grading by the grading change formulas of Section~\ref{subsec:TQFT}, since the genera of the $\ws$ and $\zs$ subregions coincide. It follows from \cite{ZemConnectedSums}*{Theorem~1.3} that
\begin{equation}
\iota_K\circ   F_{B^4,\cF'}\eqsim  F_{B^4,\bar{\cF}'} \circ \iota_0, \label{eq:involution-commutes}
\end{equation}
where $\iota_0\colon \bF[\scU,\scV]\to \bF[\scU,\scV]$ denotes the map which sends $\scU^i\scV^j$ to $\scU^j\scV^i$. Since the surface $S'$ is stabilized, and the genera of the $\ws$ and $\zs$ subregions of $\cF'$ and $\bar{\cF}'$ coincide, we conclude from Proposition~\ref{prop:main-claim-dividing-set-ind}  that
\[
F_{B^4,\cF'}\simeq  F_{B^4,\bar{\cF}'}.
\] 
 In particular, we may define a map
\[
F\colon \CFI^-(\emptyset)\to AI_0(K),
\]
which lowers the homological grading by $g(S)+1$. Concretely, if we view $AI_0(K)$ as consisting of the direct sum of two copies of $A_0(K)$, then the matrix for $F$ is given by the formula
\[
F=
\begin{pmatrix}
F_{B^4,\cF'}&0\\
H& F_{B^4,\cF'}
\end{pmatrix},
\]
where $H$ is a $\bF[\scU,\scV]$-skew equivariant, homogeneously $(-g(S)-1,-g(S)-1)$ graded homotopy between $F_{B^4,\cF'}\circ  \iota_0$ and $\iota_K \circ F_{B^4,\cF'}$. The map $F$ is clearly a chain map, and it follows from \cite{ZemAbsoluteGradings}*{Theorem~1.7} that the map $F$ becomes an isomorphism on homology after localizing at $U$. Hence, the odd and even towers of $H_*(AI_0(K))$ contain a $\bF[U]$-non-torsion element of grading $g(S)+1$ less than the generators of the odd and even towers of $\HFI^-(\emptyset)\iso \bF[U,Q]/Q^2$. Hence
\[
 \underline{d}(AI_0(K))\ge -g(S)-1,
\]
and so
\[
\ul{V}_0(K)\le \frac{g(S)+1}{2}=\left \lceil \frac{g(S)+1}{2}\right \rceil.
\]

To obtain the bound involving $\overline{V}_0(K)$, we turn around and reverse the orientation of $(B^4,S)$, to obtain a link cobordism from $(S^3,K)$ to $\emptyset$. As before, we assume that $g(S)$ is odd, and we stabilize once to obtain a surface $S'$. We write $\cG'$ for the same decoration of $S'$ as before, but given the opposite orientation as a surface. We obtain a map
\[
F_{B^4,\cG'}\colon \cCFK^-(K)\to \bF[\scU,\scV]
\]
inducing an Alexander grading change of 0. By the same reasoning as before, there is a homogeneously graded homotopy $J$ satisfying 
\[
F_{B^4,\cG'}\circ \iota_K+\iota_0 \circ F_{B^4,\cG'}=[\d,J],
\]
so we may build the chain map
\[
G=\begin{pmatrix}
F_{B^4,\cG'}&0\\
J& F_{B^4,\cG'}
\end{pmatrix}
\]
from $AI_0(K)$ to $\CFI^-(\emptyset)$. As before, the map $G$ becomes an isomorphism on homology after inverting $U$, and lowers the homological grading by $g(S)+1$. We conclude that the even (resp. odd) tower of $AI_0(K)$ has a generator which has grading at most $g(S)+1$ higher than the even (resp.~odd) generator of $\bF[U,Q]/Q^2=\HFI^-(\emptyset)$. In particular,
\[
\bar{d}(AI_0(K))\le g(S)+1,
\]
which immediately gives
\[
-\left \lceil \frac{ g(S)+1}{2} \right\rceil \le \bar{d}(AI_0(K)). \qedhere
\]

\end{proof}

\subsection{On the link Floer homology of the Hopf link}

There are two ways of orienting the components of the Hopf link which do not become equivalent after an overall change in orientation. We refer to the orientation which results in two negative crossings of the standard diagram as the \emph{negative} Hopf link, and we refer to the orientation which has two positive crossings as the \emph{positive} Hopf link. A Hopf link of sign $\varepsilon\in \{+,-\}$ bounds a normally immersed $D^2 \sqcup D^2$ with a single double point, which has sign $\varepsilon$.


 The complexes for the positive and negative Hopf links are shown in Figure~\ref{fig:Hopf}. The complexes may be computed by counting bigons on a simple Heegaard diagram (cf. \cite{MOIntegerSurgery}*{Figure~3}).

\begin{figure}[H]

\begin{tikzcd}[labels=description,row sep=1cm, column sep=1cm]
\ve{a}
&
\ve{b}
	\arrow[d,"\scV"]
	\arrow[l,"\scU"]	
	\\
\ve{c}
	\arrow[r, "\scV"]
	\arrow[u,"\scU"]
&\ve{d}
\end{tikzcd}
\qquad
\begin{tikzcd}[labels=description,row sep=1cm, column sep=1cm]
\ve{a}
&
\ve{b}
	\arrow[from=d,"\scV"]
	\arrow[from=l,"\scU"]	
	\\
\ve{c}
	\arrow[from=r, "\scV"]
	\arrow[from=u,"\scU"]
&\ve{d}.
\end{tikzcd}
\caption{The link Floer complexes of the positive (left) and negative (right) Hopf links.}
\label{fig:Hopf}
\end{figure}

%

We consider the neighborhood of a transverse double point. If the sign of the intersection point is negative, we may perform a blow-up and obtain a properly embedded surface in $B^4 \# \bar{\CP}^2$ that has relative homology class 
\[
0 \in H_2(B^4 \# \bar{\CP}^2, S^3), 
\]
and which topologically consists of two disks. The set of $\Spin^c$ structures on $\bar{\CP}^2$ can be identified with the odd integers, and the conjugation action sends $n$ to $-n$.  Since $\bar{\CP}^2$ is negative definite, there are two $\Spin^c$ structures whose Chern classes have maximal square, 
and these classes are switched by conjugation. Let $\cS$ denote this surface in $B^4\# \bar{\CP}^2$, decorated with two arcs which each divide a disk into two bigons.

\begin{lem}\label{lem:Hopf-CP2}
 View $(B^4\# \bar{\CP}^2,\cS)$ as a link cobordism from the empty set to the negative Hopf link, and let $\frs$ be a $\Spin^c$ structure on $B^4\# \bar{\CP}^2$, with maximal square. Then
\[
\scU\cdot F_{B^4\#\bar{\CP}^2,\cS,\frs}\simeq \scU \cdot F_{B^4\# \bar{\CP}^2,\cS,\bar{\frs}}\quad \text{and} \quad \scV\cdot F_{B^4\# \bar{\CP}^2, \cS,\frs}\simeq \scV \cdot F_{B^4\# \bar{\CP}^2, \cS,\bar{\frs}}.
\]
\end{lem}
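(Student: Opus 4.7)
The approach is to analyze the structure of the negative Hopf link Floer complex, and then realize a symmetry swapping the two maximal-square $\Spin^c$ structures via the complex conjugation involution on $\bar{\CP}^2$. Reading the differentials off Figure~\ref{fig:Hopf} (right panel), one has $\partial a = \scU(b+c)$ and $\partial d = \scV(b+c)$, with $b$ and $c$ cycles. Hence $\scU \cdot (b+c)$ and $\scV \cdot (b+c)$ are boundaries, and any chain of the form $p \cdot (b+c)$ with $p \in \bF[\scU,\scV]$ becomes null-homologous after multiplication by $\scU$ or by $\scV$. Since $c_1(\frs)^2 = c_1(\bar{\frs})^2$ and $\cS$ is relatively null-homologous, the grading change formulas \eqref{eq:gr-w-grading}--\eqref{eq:gr-z-grading} imply that $F_{\cS,\frs}(1)$ and $F_{\cS,\bar{\frs}}(1)$ lie in the same $(\gr_\ws,\gr_\zs)$-bigrading of $\cCFL^-$ of the negative Hopf link. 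It therefore suffices to show that their difference lies in the $\bF[\scU,\scV]$-submodule generated by $b+c$, up to boundaries.

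The complex conjugation involution $\phi$ on $\bar{\CP}^2$ is an orientation-preserving self-diffeomorphism acting as $-\id$ on $H^2$, and so swaps the two maximal-square $\Spin^c$ structures. After arranging the smooth decomposition of $B^4 \# \bar{\CP}^2$ so that $\phi$ extends by the identity on the $B^4$ summand, it produces a self-diffeomorphism $\Phi$ of $B^4 \# \bar{\CP}^2$ with $\Phi_\ast \frs = \bar{\frs}$. Naturality of the link cobordism maps then gives $F_{\cS,\bar{\frs}} = F_{\Phi(\cS),\frs}$. I choose $\cS$ so that its two disks meet the exceptional sphere at points fixed by $\phi$; then $\Phi$ preserves $\cS$ as an unoriented surface. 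Because $\phi$ is anti-holomorphic on each complex curve it preserves, it reverses the orientation of each disk of $\cS$, which at the level of the decoration corresponds to swapping the $\ws$- and $\zs$-subregions of each disk.

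Finally, I argue that swapping $\ws$ and $\zs$ on each component of the Hopf link induces the involution $b \leftrightarrow c$ on $\cCFL^-$ of the negative Hopf link. Writing $F_{\cS,\frs}(1) = p \cdot b + q \cdot c$ in the common bigrading (with $p, q \in \bF[\scU,\scV]$), this yields $F_{\cS,\bar{\frs}}(1) = q \cdot b + p \cdot c$, so the difference equals $(p+q)(b+c)$, which by the first paragraph is killed by $\scU$ and by $\scV$ in homology. The main obstacle will be the chain-level identification of the $\ws/\zs$-swap with the transposition $b \leftrightarrow c$, which requires a careful inspection of a genus-zero Heegaard diagram for the Hopf link together with the conjugation and basepoint conventions. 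Should $\Phi$ fail to preserve the decoration of $\cS$ on the nose, the residual discrepancy can be absorbed using the bypass relation together with Lemma~\ref{lem:almost-trivial-if-loop} (noting $b_1(B^4 \# \bar{\CP}^2)=0$), which guarantees that dividing sets with closed loops give maps annihilated by $\scU$ and $\scV$.
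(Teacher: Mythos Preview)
Your first paragraph is correct and matches the paper's setup: both $F_{\frs}(1)$ and $F_{\bar\frs}(1)$ land in bigrading $(\tfrac12,\tfrac12)$, hence in the $\bF$-span of $\ve{b}$ and $\ve{c}$, and $\scU\cdot[\ve{b}+\ve{c}]=\scV\cdot[\ve{b}+\ve{c}]=0$. From here the paper finishes immediately, with no geometric symmetry argument: each of $F_{\frs}(1)$ and $F_{\bar\frs}(1)$ is $\bF[U]$-non-torsion in homology (an adaptation of \cite{ZemAbsoluteGradings}*{Theorem~1.7}), so neither can equal $0$ or $\ve{b}+\ve{c}$. Hence each is exactly $\ve{b}$ or $\ve{c}$, and their difference is automatically $0$ or $\ve{b}+\ve{c}$, which is killed by $\scU$ and by $\scV$.

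Your attempt to realise the swap $\frs\leftrightarrow\bar\frs$ via complex conjugation has genuine problems beyond the acknowledged ``main obstacle''. Complex conjugation on $\bar{\CP}^2$ has fixed set $\RP^2$ and acts on any invariant round $S^3$ as a reflection, never as the identity; so one cannot glue it to the identity on the $B^4$ summand. One can isotope $\phi|_{S^3}$ to the identity and then extend, but the resulting $\Phi$ will move $\cS$ in the $B^4$ piece and you lose control of $\Phi(\cS)$. More seriously, even if some $\Phi$ fixed $\d(B^4\#\bar{\CP}^2)$ pointwise and preserved $\cS$ setwise, it could not reverse the orientation of $\cS$: that would reverse the orientation of the boundary Hopf link, contradicting $\Phi|_{S^3}=\id$. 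Finally, in the link Floer TQFT, reversing the orientation of a surface is \emph{not} the same operation as swapping the $\ws$- and $\zs$-subregions; these interact differently with the cobordism maps. The one-line non-torsion argument sidesteps all of this.
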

\begin{proof} Two $\bF[\scU,\scV]$-equivariant maps from $\bF[\scU,\scV]$ to $\cCFL^-(\bH)$ are homotopic if and only if their evaluations on $1$ are homologous. The link Floer complex of the negative Hopf link is shown below:
\begin{equation}
\begin{tikzcd}[labels=description,row sep=1cm, column sep=1cm]
\ve{a}
&
\ve{b}
	\arrow[from=d,"\scV"]
	\arrow[from=l,"\scU"]	
	\\
\ve{c}
	\arrow[from=r, "\scV"]
	\arrow[from=u,"\scU"]
&\ve{d}.
\end{tikzcd}
\label{eq:negative-Hopf}
\end{equation}
Using the grading formulas from \cite{ZemAbsoluteGradings}*{Theorem~1.4}, 
we compute that
\[
\begin{split}
\gr_{\ws}(F_{B^4\# \bar{\CP}^2,\cS,\frs}(1))&=\gr_{\zs}(F_{B^4\# \bar{\CP}^2,\cS,\frs}(1))=\frac{1}{2}, \quad \text{and}\\
\gr_{\ws}(F_{B^4\# \bar{\CP}^2,\cS,\bar{\frs}}(1))&=\gr_{\zs}(F_{B^4\# \bar{\CP}^2,\cS,\bar{\frs}}(1))=\frac{1}{2}.
\end{split}
\]
 The $(\gr_{\ws},\gr_{\zs})$-bigradings of $\ve{a}$, $\ve{b}$, $\ve{c}$ and $\ve{d}$ are  $\left(-\tfrac{1}{2},\tfrac{3}{2}\right)$, $\left(\tfrac{1}{2},\tfrac{1}{2}\right)$, $\left(\tfrac{1}{2},\tfrac{1}{2}\right)$, and $\left(\tfrac{3}{2},-\tfrac{1}{2}\right)$, respectively. Hence $F_{B^4\# \bar{\CP}^2, \cS,\frs}(1)$ and $F_{B^4\# \bar{\CP}^2,\cS,\bar{\frs}}(1)$ are in the $\bF$-span of $\ve{b}$ and $\ve{c}$. Furthermore, neither can be $\ve{b}+\ve{c}$, because $\scU\cdot[\ve{b}+\ve{c}]=0$ and both $F_{B^4\# \bar{\CP}^2, \cS,\frs}(1)$ and $F_{B^4\# \bar{\CP}^2, \cS,\bar{\frs}}(1)$ are non-torsion, by a simple adaptation of \cite{ZemAbsoluteGradings}*{Theorem~1.7}. Hence, each map sends $1$ to either $\ve{b}$ or $\ve{c}$. Since
\[
\scU \cdot [\ve{b}]=\scU\cdot [\ve{c}]\quad \text{and} \quad \scV\cdot [\ve{b}]=\scV\cdot [\ve{c}],
\]
the ambiguity disappears after multiplying by either $\scU$ or $\scV$, and the main claim follows.
\end{proof}

\subsection{Involutive correction terms and the clasp number}

We now prove the clasp number bounds from the involutive correction terms:

\begin{proof}[Proof of equation~\eqref{thm:involutive-clasp} of Theorem~\ref{thm:involutive}]
The proof is similar to the proof of equation~\eqref{thm:involutive-genus}. Suppose that $S$ is an immersed surface in $B^4$, which bounds $K$. Suppose that there are $p$ positive double points, and $n$ negative double points. We blow-up at the negative double points, and smooth the positive double points to obtain a properly embedded, null-homologous surface $\hat{S}$ in $B^4 \#  n\bar{\CP}^2$. We let $\frs\in \Spin^c( B^4 \# n \bar{\CP}^2)$ have maximal square. We asume that $\hat{S}$ has odd genus, by stabilizing if necessary, and then perform an additional stabilization to obtain a surface $S'$, which has even genus. Now, we let $\cF'$ denote a decoration of $S'$ such that the genera of the $\ws$ and $\zs$ subregions are equal, and such that the stabilization torus of $S'$ is contained in the $\ws$ subregion. We consider the map
\[
F_{B^4\# n \bar{\CP}^2, \cF',\frs}\colon \bF[\scU,\scV]\to \cCFK^-(K).
\]
The map induces a $(\gr_{\ws},\gr_{\zs})$-bigrading change of $(-g(S)-1,-g(S)-1)$ by the grading change formulas of Section~\ref{subsec:TQFT}.

We let $\bar{\cF}'$ denote the decoration of $S'$ obtained by reversing the roles of the $\ws$ and $\zs$ subregions, and adding a half twist to the dividing set along the boundary. Similar to~\eqref{eq:involution-commutes}, we have 
\begin{equation}
\iota_K\circ F_{B^4\# n \bar{\CP}^2, \cF',\frs}\eqsim F_{B^4 \# n \bar{\CP}^2, \bar{\cF}',\bar{\frs}}\circ \iota_0.\label{eq:inv-clasp-homotopy-1}
\end{equation}
Proposition~\ref{prop:main-claim-dividing-set-ind} implies that 
\begin{equation}
F_{B^4\# n \bar{\CP}^2,\bar{\cF}',\bar{\frs}}\simeq F_{B^4\# n \bar{\CP}^2,\cF', \bar{\frs}}, \label{eq:inv-clasp-homotopy-2}
\end{equation}
while Lemmas~\ref{lem:stabilization} and~\ref{lem:Hopf-CP2} imply that
\begin{equation}
F_{B^4\# n \bar{\CP}^2,\cF', \bar{\frs}}\simeq F_{B^4\# n\bar{\CP}^2,\cF',\frs}.\label{eq:inv-clasp-homotopy-3}
\end{equation}
Combining equations~\eqref{eq:inv-clasp-homotopy-1}, ~\eqref{eq:inv-clasp-homotopy-2}, and~\eqref{eq:inv-clasp-homotopy-3}, we obtain a skew-equivariant homotopy $H$ between
$\iota_K\circ F_{B^4\# n \bar{\CP}^2, \cF',\frs}$ and $F_{B^4\# n \bar{\CP}^2,\cF',\frs}\circ \iota_0$, which we use to make a map 
\[
F\colon \bF[U,Q]/Q^2\to AI_0(K),
\]
which has homogeneous grading $-p-1$, and which becomes an isomorphism on homology after inverting $U$. It follows that
\[
\underline{V}_0(K)\le \frac{p+1}{2}=\left\lceil \frac{p+1}{2}\right\rceil.
\]

The proof of the stated inequality involving $\overline{V}_0(K)$ and $c^-(K)$ is obtained by turning around and reversing the orientation of $S$, to obtain a normally immersed surface in $B^4$, which we now view as a cobordism from $K$ to $\emptyset$. Note that doing so changes the sign of each double point. We smooth the positive double points (which were originally negative double points), and blow up at the negative double points (which were previously positive). By combining the above argument with the analogous claim in the proof of equation~\eqref{thm:involutive-genus} of Theorem~\ref{thm:involutive}, we obtain the statement.
\end{proof}

\section{An invariant from the $\scU\scV=0$ version of knot Floer homology}

We now describe our invariant $\omega(K)$, which is defined using the version of knot Floer homology obtained by setting $\scU\scV=0$. We write $\widehat{\cR}$ for the ring $\bF[\scU,\scV]/\scU\scV$. In this section, we will write $\widehat{\cC}_K$ for $\cCFK^-(K)/\scU\scV$.

\begin{define}
 We define $\omega (K)\in \N$ to be the minimal $n$ such that there is a $(\gr_{\ws},\gr_{\zs})$-grading preserving, $\widehat{\cR}$-equivariant chain map from $\widehat{\cR}$ to $\widehat{\cC}_K\otimes \widehat{\cS}_n^\vee$, which sends $\bF[\scU]$-non-torsion elements to $\bF[\scU]$-non-torsion elements, and sends $\bF[\scV]$-non-torsion elements to $\bF[\scV]$-non-torsion elements.
 Here $\widehat{\cS}_n^\vee$ is the $\widehat{\cR}$ reduction of the staircase complex used in the definition of $Y_n(K)$.
\end{define} 

We will call a chain map which sends $\bF[\scU]$-non-torsion elements to $\bF[\scU]$-non-torsion elements a $\bF[\scU]$-local map. We define a $\bF[\scV]$-local map similarly.
The most important property satisfied by $\omega(K)$ is the following:

\begin{thm}
 For a knot $K$ in $S^3$,
\[
\omega (K)\le g_4(K).
\]
\end{thm}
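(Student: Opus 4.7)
The plan is to specialize the construction in the proof of Theorem~\ref{thm:Y-genus-clasp-restated} to $n = g_4(K)$ and then reduce the resulting cycle modulo $\scU\scV$ to produce the required chain map. First, set $g = g_4(K)$ and fix a genus-$g$ slice surface $S \subset B^4$ for $K$. For $i \in \{-g, -g+2, \dots, g\}$, choose a single-arc decoration $\cF_i$ of $S$ with $g(\cF_{i,\ws}) = (g-i)/2$ and $g(\cF_{i,\zs}) = (g+i)/2$, and set $z_i = F_{B^4, \cF_i}(1) \in \cCFK^-(K)$. Proposition~\ref{prop:main-claim-dividing-set-ind} supplies interpolating chains $z_i$ of bigrading $(-g+i,-g-i)$ for $i \in \{-g+1, \dots, g-1\}$ satisfying $\partial z_i = \scU z_{i+1} + \scV z_{i-1}$. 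Exactly as in the proof of Theorem~\ref{thm:Y-genus-clasp-restated}, the element $y := \sum_{i=-g}^g z_i \otimes x_{-i}$ is a cycle in $\cC_K \otimes \cS_g^\vee$ of bigrading $(0,0)$.

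Next, I would project $y$ to a cycle $\hat y$ in $\widehat{\cC}_K \otimes \widehat{\cS}_g^\vee$ and define $\Phi \colon \widehat{\cR} \to \widehat{\cC}_K \otimes \widehat{\cS}_g^\vee$ by $\Phi(1) = \hat y$, extended $\widehat{\cR}$-equivariantly. Then $\Phi$ is automatically a grading-preserving chain map. Since $\Phi$ is $\widehat{\cR}$-equivariant and $1 \in \widehat{\cR}$ is both $\bF[\scU]$- and $\bF[\scV]$-non-torsion, proving $\omega(K) \le g$ reduces to showing that $[\hat y]$ is both $\bF[\scU]$- and $\bF[\scV]$-non-torsion in $H_*(\widehat{\cC}_K \otimes \widehat{\cS}_g^\vee)$.

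For the $\bF[\scU]$-local property I would pass to the quotient by $\scV$ and invert $\scU$. A direct computation of $\widehat{\cS}_g^\vee/\scV$ (whose only nontrivial differentials are $\partial x_j = \scU x_{j+1}$ for $j+g$ even with $j<g$) shows that $H_*((\widehat{\cS}_g^\vee/\scV)[\scU^{-1}]) \cong \bF[\scU, \scU^{-1}]$, generated by the single extremal class $[x_g]$; the remaining $x_j$ with $j+g$ odd are $\scU$-torsion, while those with $j+g$ even and $j<g$ are not cycles. By K\"unneth over the field $\bF[\scU, \scU^{-1}]$, the image of $[\hat y]$ in the localized tensor product is, up to a unit, $[z_{-g}] \otimes [x_g]$, so $\bF[\scU]$-non-torsion of $\hat y$ reduces to $\bF[\scU]$-non-torsion of $[z_{-g}]$ in $H_*(\widehat{\cC}_K/\scV)$. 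The $\bF[\scV]$-local case is symmetric, involving the coefficient $z_g$ of $x_{-g}$.

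The main obstacle will be this final extremal-nontorsion step. This is the $\scU\scV = 0$ analogue of the fact noted in the introduction that the slice-induced staircase-to-$\cCFK^-(K)$ chain map becomes an isomorphism on homology after inverting $U = \scU\scV$. My plan for handling it is a reverse-cobordism argument: because $\cF_{-g,\zs}$ is a disk, capping $(B^4, S, \cF_{-g})$ with an appropriately decorated reverse cobordism for $K$ produces a closed decorated surface in $S^4$ of genus $2g$ whose link Floer cobordism map is a nonzero power of $\scU$ modulo $\scV$, by iterated application of Lemma~\ref{lem:neighborhood-stabilization} after a connected-sum decomposition of the closed surface into tori. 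This forbids any power of $\scU$ from annihilating $z_{-g}$ modulo $\scV$. The symmetric argument with $\cF_g$ controls $z_g$, completing the proof.
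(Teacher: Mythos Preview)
Your overall approach matches the paper's: construct the cycle $y \in \cC_K \otimes \cS_g^\vee$ as in the proof of Theorem~\ref{thm:Y-genus-clasp-restated}, reduce modulo $\scU\scV$, and verify the resulting map $\widehat{\cR} \to \widehat{\cC}_K \otimes \widehat{\cS}_g^\vee$ is $\bF[\scU]$- and $\bF[\scV]$-local. The paper packages this more abstractly---phrasing the construction as a grading-preserving local chain map $\cS_g \to \cCFK^-(K)$, then tensoring with $\id_{\cS_g^\vee}$ and precomposing with the cotrace $\cR \to \cS_g \otimes \cS_g^\vee$---but the content is the same, and your reduction to the extremal classes $z_{\pm g}$ via projection onto $x_{\pm g}$ is correct.

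The gap is in your final step. The doubled surface $\Sigma = S \cup_K \bar S \subset S^4$ need not admit a connected-sum decomposition into \emph{unknotted} tori: there are many knotted closed orientable surfaces in $S^4$, and nothing about your construction forces $\Sigma$ to be standard. So ``iterated application of Lemma~\ref{lem:neighborhood-stabilization}''---a statement about genus-one surfaces in $B^4$ bounding an unknot---is not available here. Your grading computation does pin the composite map down to $\lambda\,\scU^{2g}$ for some $\lambda \in \bF$, but the decomposition argument does not establish $\lambda \neq 0$.

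There are two clean fixes. One is to follow the paper: the map $\cS_g \to \cCFK^-(K)$ built from the $z_i$ is a grading-preserving local map between knot-like complexes over $\bF[\scU,\scV]$ (the non-vanishing input being \cite{ZemAbsoluteGradings}*{Theorem~1.7}), and for such complexes the reduction modulo $\scU\scV$ of a local map is automatically both $\bF[\scU]$-local and $\bF[\scV]$-local; this is the general fact the paper invokes in its final sentence. Alternatively, if you want to keep the reverse-cobordism picture, puncture $\Sigma$ to obtain a genus-$2g$ surface in $B^4$ bounding an unknot and cite \cite{ZemAbsoluteGradings}*{Theorem~1.8} directly (the same result underlying Lemma~\ref{lem:neighborhood-stabilization}, but valid in arbitrary genus and for any isotopy class): it computes the induced element to be $\scU^{2g}$ regardless of the embedding, which is exactly what you need.
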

\begin{proof} The proof of Theorem~\ref{thm:Y-genus-clasp} implies that if $K$ has a slice surface of genus $n$, then we obtain a grading preserving local chain map
 \[
F\colon \cS_n\to  \cCFK^-(K).
 \]
 Tensoring with the identity map of $\cS_n^\vee$, and composing with the cotrace map from $\cR$ to $\cS_n\otimes \cS_n^\vee$, which is a local map (see \cite{ZemConnectedSums}*{Lemma~2.18}), we obtain a local map from $\cR$ to $\cCFK^-(K)\otimes \cS_n^\vee$. The  reduction to $\widehat{\cR}$ of $F$ is grading preserving and both $\bF[\scU]$-local and $\bF[\scV]$-local.
\end{proof}

We now prove some basic properties of $\omega(K)$, and also compare $\omega(K)$ and $\nu(K)$. We write $\widehat{\scA}_n$ for the subset of $\widehat{\cC}_K$ in Alexander grading $n$. We recall that $\nu (K)$ is defined to be the minimum $s\in \Z$ such that the map
\[
H_*(\widehat{\scA}_s)\to H_* \left(\widehat{\cC}_K/(\scU,\scV-1)\right)\iso \bF
\]
is surjective (see \cite{OSRationalSurgeries}*{Definition~9.1}). We have the following alternate characterizations of $\nu (K)$:
\begin{lem} The integer $\nu (K)$ coincides with the minimal $s$ such that $H_*(\widehat{\scA}_s)$ contains an $\bF[\scV]$-non-torsion element.
\end{lem}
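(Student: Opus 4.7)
The plan is to establish, for any cycle $x\in \widehat{\scA}_s$, the equivalence
\[
[x] \text{ maps nontrivially to } H_*(\widehat{\cC}_K/(\scU,\scV-1))\iso \bF \iff [x] \text{ is } \bF[\scV]\text{-non-torsion in } H_*(\widehat{\cC}_K).
\]
Both $\nu(K)$ and the quantity in the lemma are the minimum $s$ such that some element of $H_*(\widehat{\scA}_s)$ with the corresponding property exists, so this single equivalence yields the result.

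The easy direction I would dispatch directly. If $[x]$ is $\bF[\scV]$-torsion then $\scV^n x=\partial y$ in $\widehat{\cC}_K$ for some $n\ge 0$, and modulo $(\scU,\scV-1)$ the relation $\scV\equiv 1$ gives $x=\scV^n x=\partial y$, so $[x]=0$ in the quotient, and in particular cannot surject onto $\bF$.

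For the converse, I would factor the map $H_*(\widehat{\cC}_K)\to H_*(\widehat{\cC}_K/(\scU,\scV-1))$ through $\cB:=\widehat{\cC}_K/\scU$. The first observation, which exploits the relation $\scU\scV=0$, is that a relation of the form $\scV^n x=\partial y+\scU z$ in $\widehat{\cC}_K$ becomes $\scV^{n+1} x=\partial(\scV y)$ after one extra multiplication by $\scV$, because the $\scU z$ term is annihilated. This shows $[x]$ is $\bF[\scV]$-non-torsion in $H_*(\widehat{\cC}_K)$ iff its image $[\bar x]$ is $\bF[\scV]$-non-torsion in $H_*(\cB)$. Next, since $\widehat{\cC}_K$ is finitely generated over $\widehat{\cR}$, $H_*(\cB)$ is finitely generated over the PID $\bF[\scV]$, hence decomposes as $\bF[\scV]^r\oplus T$ for some torsion module $T$. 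Analyzing the long exact sequence of
\[
0\to \cB\xrightarrow{\scV-1}\cB\to \cB/(\scV-1)\to 0
\]
together with the fact that $\scV-1$ acts invertibly on each summand $\bF[\scV]/\scV^k$, one computes $H_*(\cB/(\scV-1))\iso \bF^r$; comparing with the given $\bF$ forces $r=1$. Finally, an element of $\bF[\scV]\oplus T$ is $\bF[\scV]$-non-torsion iff its free component is nonzero, and this free component is precisely what the quotient $\bF[\scV]\oplus T\twoheadrightarrow \bF[\scV]/(\scV-1)=\bF$ detects.

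The main obstacle is the transition between $\bF[\scV]$-torsion in $H_*(\widehat{\cC}_K)$ and $\bF[\scV]$-torsion in $H_*(\cB)$: a priori, the connecting homomorphism from $0\to \scU\widehat{\cC}_K\to \widehat{\cC}_K\to \cB\to 0$ could introduce torsion classes in $H_*(\cB)$ that are not torsion classes in $H_*(\widehat{\cC}_K)$, which would break the equivalence. The identity $\scU\scV=0$ is exactly what tames this: a single additional factor of $\scV$ converts any torsion relation that only holds modulo $\scU$ into a genuine torsion relation in $H_*(\widehat{\cC}_K)$.
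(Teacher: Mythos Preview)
Your proof is correct and follows essentially the same route as the paper's. Both arguments factor the map through $\widehat{\cC}_K/\scU$, use the relation $\scU\scV=0$ to show that $\bF[\scV]$-non-torsion in $H_*(\widehat{\cC}_K)$ is equivalent to $\bF[\scV]$-non-torsion in $H_*(\widehat{\cC}_K/\scU)$, and then identify non-torsion elements of $H_*(\widehat{\cC}_K/\scU)$ with those mapping nontrivially to $\bF$. The paper compresses the last two steps into ``easily seen'' assertions; you have spelled them out (the $\scV$-multiplication trick and the structure-theorem computation $r=1$), but the content is the same.
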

\begin{proof} Firstly, the map $H_*(\widehat{\scA}_s)\to H_*(\widehat{\cC}_K/(\scU,\scV-1))$ factors through the map $H_*(\widehat{\cC}_K/\scU)\to H_*(\widehat{\cC}_K/(\scU,\scV-1))$, which is easily seen to map an element $x$ to $1\in \bF$ if and only if $x$ is $\bF[\scV]$-non-torsion. Hence,  $\nu (K)$ coincides with the minimal $s$ such that
\[ 
\im \left( H_*(\widehat{\scA}_s)\to H_*(\widehat{\cC}_K/\scU) \right)
\]
contains an $\bF[\scV]$-non-torsion element. Finally, it is easy to see that the map $H_*(\widehat{\cC}_K)\to H_*(\widehat{\cC}_K/\scU)$ maps $\bF[\scV]$-non-torsion elements to $\bF[\scV]$-non-torsion elements, which implies the claim.
\end{proof}

We have the following:

\begin{prop} For any knot $K\subset S^3$,
\[
\nu (K)\le \omega (K).
\]
\end{prop}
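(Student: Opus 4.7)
The plan is to exploit the alternative characterization of $\nu(K)$ just established: $\nu(K)\le n$ iff $H_*(\widehat{\scA}_n)$ contains an $\bF[\scV]$-non-torsion element. Setting $n=\omega(K)$, let $f\colon\widehat{\cR}\to\widehat{\cC}_K\otimes\widehat{\cS}_n^\vee$ be a chain map witnessing $\omega(K)=n$. I will extract from $f(1)$ a cycle in $\widehat{\scA}_n$ whose image in $H_*(\widehat{\cC}_K/\scU)$ is $\bF[\scV]$-non-torsion.

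First I would unpack $f(1)$. Since $f$ preserves $(\gr_{\ws},\gr_{\zs})$ and is $\widehat{\cR}$-equivariant, $f(1)$ sits in bigrading $(0,0)$, so I may write $f(1)=\sum_{i=-n}^n z_i\otimes x_i$ with $z_i\in\widehat{\cC}_K$ in Alexander grading $-i$; in particular $z_{-n}\in\widehat{\scA}_n$. Dualizing the staircase differential on $\cS_n$ gives $\d^\vee x_j=0$ for $j$ odd and $\d^\vee x_{-n+2k}=\scV x_{-n+2k-1}+\scU x_{-n+2k+1}$ in the interior, with the expected truncations at $x_{\pm n}$. Reading the coefficient of each $x_j$ in $\d f(1)=0$ yields $\d z_j=0$ for $j$ even and $\d z_j=\scU z_{j-1}+\scV z_{j+1}$ for $j$ odd; in particular $z_{-n}$ is a cycle.

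The heart of the argument is a localization at $\scV$. Since $\scU\scV=0$, inverting $\scV$ kills $\scU$, giving $\widehat{\cR}[\scV^{-1}]=\bF[\scV,\scV^{-1}]$, and a direct computation gives $H_*(\widehat{\cS}_n^\vee[\scV^{-1}])=\bF[\scV,\scV^{-1}]\cdot[x_{-n}]$, since all odd-indexed generators become boundaries while $[x_{-n}]$ survives. In the localized tensor product, the staircase-type relation $\d z_{2k+1-n}=\scV z_{2k+2-n}$ (valid after setting $\scU=0$) gives
\[
\d(z_{2k+1-n}\otimes\scV^{-1}x_{2k+2-n})=z_{2k+1-n}\otimes x_{2k+1-n}+z_{2k+2-n}\otimes x_{2k+2-n},
\]
so the consecutive pairs in the expansion of $f(1)$ telescope out, leaving $[f(1)]=[z_{-n}\otimes x_{-n}]$ in the localized homology.

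Finally, the $\bF[\scV]$-locality of $f$ forces $[f(1)]\ne 0$ after inverting $\scV$, hence $[z_{-n}\otimes x_{-n}]\ne 0$. A K\"unneth argument over the graded PID $\bF[\scV,\scV^{-1}]$ (both factors have free homology, so no Tor term) identifies the localized tensor product's homology with $H_*(\widehat{\cC}_K[\scV^{-1}])\otimes\bF[\scV,\scV^{-1}]\cdot[x_{-n}]$ and forces $[z_{-n}]\ne 0$ in $H_*(\widehat{\cC}_K[\scV^{-1}])=H_*((\widehat{\cC}_K/\scU)[\scV^{-1}])$. This says exactly that the image of $z_{-n}$ in $H_*(\widehat{\cC}_K/\scU)$ is $\bF[\scV]$-non-torsion, so the preceding lemma gives $\nu(K)\le n=\omega(K)$. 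The main obstacle I anticipate is bookkeeping: nailing down the dualized staircase differential at the boundary generators $x_{\pm n}$, and confirming cleanly that the lemma's notion of ``$\bF[\scV]$-non-torsion in $H_*(\widehat{\scA}_n)$'' matches non-vanishing after localization at $\scV$ of $\widehat{\cC}_K/\scU$.
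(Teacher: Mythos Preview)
Your proof is correct and ultimately rests on the same idea as the paper's: both arguments extract the $x_{-n}$-coefficient $z_{-n}$ of $f(1)$ and show it is $\bF[\scV]$-non-torsion in Alexander grading $n$. The paper packages this more abstractly: it observes that the projection $\pi\colon\widehat{\cS}_n^\vee\to\widehat{\cR}$ onto $x_{-n}$ is an $\bF[\scV]$-local chain map (because any $\bF[\scV]$-non-torsion class in $H_*(\widehat{\cS}_n^\vee)$ must involve a power of $\scV$ times $x_{-n}$), and that tensoring $\bF[\scV]$-local maps preserves $\bF[\scV]$-locality, so $(\id\otimes\pi)\circ f$ evaluated at $1$ is immediately the desired $\bF[\scV]$-non-torsion element of $H_*(\widehat{\scA}_n)$. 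Your localization, telescoping, and K\"unneth computation is essentially a hands-on verification of exactly these two facts, so the two arguments are the same in substance, with yours trading the paper's appeal to general properties of local maps for an explicit calculation.
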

\begin{proof} Suppose there is a grading preserving chain map $F\colon \widehat{\cR}\to \widehat{\cC}_K\otimes \widehat{\cS}_n^\vee$ which is both $\bF[\scU]$-local and $\bF[\scV]$-local. Consider the chain map from $\widehat{\cS}_n^\vee$ to $\widehat{\cR}$ given by projecting onto $x_{-n}$. Note that this map is $\bF[\scV]$-local, since any $\bF[\scV]$-non-torsion element of $H_*(\widehat{\cS}_n^\vee)$  must contain $\scV^k x_{-n}$ as a summand, for some $k$. The tensor product of two $\bF[\scV]$-local maps is $\bF[\scV]$-local, so we obtain an $\bF[\scV]$-local map from  $\widehat{\cC}_K\otimes \widehat{\cS}_n^\vee$ to $\widehat{\cC}_K$, which increases the Alexander grading by $n$. Composing these maps and evaluating at $1\in \widehat{\cR}$, we obtain an $\bF[\scV]$-non-torsion element of $H_*(\widehat{\scA}_n)$, so $\nu (K)\le n$. The proof is complete.
\end{proof}

\begin{lem}\label{lem:omega-range-values}
 The invariant $\omega (K)$ takes values in $\{\tau(K),\tau(K)+1\}$.
\end{lem}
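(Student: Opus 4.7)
The plan is to prove both inequalities $\tau(K)\le\omega(K)\le\tau(K)+1$. The lower bound is immediate: the previous proposition gives $\nu(K)\le\omega(K)$, and $\tau(K)\le\nu(K)$ follows by comparing the $\bF[\scV]$-non-torsion characterization of $\nu$ recorded just above with the parallel definition of $\tau$ as the minimum Alexander grading at which $\widehat{\cC}_K$ carries an $\bF[\scV]$-non-torsion cycle.

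For the upper bound, set $n=\tau(K)+1$ and construct a grading-preserving, $\widehat{\cR}$-equivariant chain map
\[
f\colon\widehat{\cR}\to\widehat{\cC}_K\otimes\widehat{\cS}_n^\vee
\]
by specifying $f(1)=\sum_{j=-n}^{n}z^{(j)}\otimes x^{(-j)}$, with each $z^{(j)}$ forced by the grading constraint to lie in Alexander grading $j$ and $\delta$-grading $-n$. A direct computation reduces $\d f(1)=0$ to the staircase identities $\d z^{(j)}=0$ when $j\equiv n\pmod 2$, and $\d z^{(k)}=\scV z^{(k-1)}+\scU z^{(k+1)}$ otherwise. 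Let $\xi,\eta\in\widehat{\cC}_K$ denote cycles of bigradings $(0,-2\tau)$ and $(-2\tau,0)$ representing the generators of $H_*(\widehat{\cC}_K/\scU)\cong\bF[\scV]$ and $H_*(\widehat{\cC}_K/\scV)\cong\bF[\scU]$, respectively.

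Passing to a reduced model of $\cCFK^-(K)$ containing a tall-staircase subcomplex with generators $a_0=\xi,\,b_0,\,a_1,\,\ldots,\,b_{\tau-1},\,a_\tau=\eta$ satisfying $\d b_i=\scU a_i+\scV a_{i+1}$, I would take the cycles to be $z^{(\tau+1)}:=\scV\xi$ together with $z^{(\tau-1-2i)}:=\scU a_i$ for $0\le i\le\tau$, and the connectors to be $z^{(\tau)}:=0$ together with $z^{(\tau-2-2i)}:=\scU b_i$ for $0\le i\le\tau-1$. The staircase relations reduce, using $\scU\scV=0$ in $\widehat{\cR}$, to the identities $\d(\scU b_i)=\scU^2 a_i=\scV(\scU a_{i+1})+\scU(\scU a_i)$, which hold automatically, and the extreme connector identity reduces to $\scV\cdot\scU\xi+\scU\cdot\scV\xi=2\scU\scV\xi=0$. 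Finally, $f$ is $\bF[\scV]$-local because the $x^{(-n)}$-component of $f(1)$ is $\scV\xi$, which remains $\bF[\scV]$-non-torsion in $H_*(\widehat{\cC}_K/\scU)$ by choice of $\xi$; symmetrically, $f$ is $\bF[\scU]$-local via the $x^{(n)}$-component $\scU\eta$ modulo $\scV$.

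The hard part will be exhibiting the tall-staircase subcomplex inside $\widehat{\cC}_K$ for an arbitrary knot $K$, since for a general knot the reduction of $\cCFK^-(K)$ to a tall staircase plus box summands is nontrivial and appeals to a standard-complex classification of reduced knot Floer complexes up to local equivalence; this is the step where one must work to avoid accidentally invoking more structure than is actually present.
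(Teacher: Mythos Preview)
Your lower bound argument is fine and matches the paper's (implicit) reasoning. The upper bound, however, is where you go astray: the ``hard part'' you flag is not merely hard but in general false, and fortunately it is also entirely unnecessary.

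The step-$1$ staircase subcomplex you posit need not exist. Consider the complex $\scC$ from Section~\ref{sec:examples} with generators $\ve{a},\ve{b},\ve{c}$ and $\d\ve{b}=\scU^2\ve{a}+\scV^2\ve{c}$, which is locally equivalent to $\cCFK^-(T_{4,5}\#-T_{2,3;2,5})$. Here $\tau=2$, so you would need cycles $a_0,a_1,a_2$ in Alexander gradings $2,0,-2$ and connectors $b_0,b_1$ with $\d b_i=\scU a_i+\scV a_{i+1}$; but there is no element of bigrading $(-2,-2)$ to serve as $a_1$, and no element whose differential is $\scU\ve{a}$. No classification theorem will rescue this: the existence of such a subcomplex would give a grading-preserving local map $\widehat{\cS}_\tau\to\widehat{\cC}_K$ and hence force $\omega=\tau$, contradicting the computation $\omega(\scC)=3$ in Lemma~\ref{lem:long-trefoil}.

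The paper sidesteps all of this with a one-line trick. In your notation, set
\[
z^{(\tau+1)}=\scV\xi,\qquad z^{(-\tau-1)}=\scU\eta,\qquad z^{(j)}=0\ \text{ for } -\tau\le j\le\tau.
\]
Since $\xi$ is a cycle modulo $\scU$, we have $\d\xi\in\scU\cdot\widehat{\cC}_K$, so $\d(\scV\xi)=\scV\,\d\xi=0$; likewise $\d(\scU\eta)=0$. The only nontrivial connector relations are at $k=\pm\tau$, and these read $0=\scU\cdot\scV\xi$ and $0=\scV\cdot\scU\eta$, both of which hold because $\scU\scV=0$ in $\widehat{\cR}$. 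Locality follows directly from the choice of $\xi$ and $\eta$. Thus the relation $\scU\scV=0$ does all the work, and no structure theory for $\widehat{\cC}_K$ is needed.
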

\begin{proof} 
We can view $\widehat{\cC}_K/\scU$ and $\widehat{\cC}_K/\scV$ as subspaces of $\widehat{\cC}_K$ (though not as subcomplexes). Namely, we view $\widehat{\cC}_K/\scU$ as being generated over $\bF$ by monomials $\scV^j \cdot \xs$ where $j\in \N$ and $\xs\in \bT_{\a}\cap \bT_{\b}$, and similarly for $\widehat{\cC}_K/\scV$.  
We let $y_{\tau}$ be an $\bF[\scV]$-non-torsion cycle of $\widehat{\cC}_K/\scU$ which is of Alexander grading $\tau$, and we let $y_{-\tau}\in \widehat{\cC}_K/\scV$ be an $\bF[\scU]$-non-torsion cycle, of Alexander grading $-\tau$. The elements $y_{\tau}$ and $y_{-\tau}$ may not be cycles in $\widehat{\cC}_K$. However, $\scV\cdot y_{\tau}$ and $\scU\cdot y_{-\tau}$ are cycles in $\widehat{\cC}_K$. We can define a grading preserving, $\widehat{\cR}$-equivariant map
\[
F\colon \cS_{\tau+1}\to \widehat{\cC}_K
\]
 which sends $x_{-\tau-1}$ to $\scU \cdot  y_{-\tau}$, sends $x_{\tau+1}$ to $\scV\cdot y_{\tau}$, and maps all other $x_{i}$ to zero. Noting that
\[
\scU\cdot (\scV\cdot y_{\tau})=0\quad \text{and} \quad \scV\cdot (\scU\cdot y_{-\tau})=0,
\]
we see that the map $F$ is a chain map. The map $F$ is also clearly $\bF[\scU]$-local and $\bF[\scV]$-local. Hence, there is a grading preserving, $\bF[\scU]$ and $\bF[\scV]$-local map from $\widehat{\cS}_{\tau+1}$ to $\widehat{\cC}_K$. Algebraically, this is equivalent to the existence of a grading preserving, $\bF[\scU]$ and $\bF[\scV]$-local map from $\widehat{\cR}$ to  $\widehat{\cC}_K\otimes \widehat{\cS}_{\tau+1}^\vee$, completing the proof.

\end{proof}

\section{Examples}
\label{sec:examples}

In this section we describe a few simple examples. We note that we wrote a computer program using SageMath \cite{SageMath} which can perform all of the stated computations. It is available at the second author's webpage.

\subsection{A knot with $\omega (K)>\nu (K)$}

We now consider the complex $\scC$ which has 3 generators, $\ve{a}$, $\ve{b}$ and $\ve{c}$, with differential 
\[
\d \ve{b} =\scU^2 \cdot \ve{a}+\scV^2\cdot \ve{c}.
\]
The $(\gr_{\ws},\gr_{\zs})$ bigradings of $\ve{a}$, $\ve{b}$, and $\ve{c}$ are $(0,-4)$, $(-3,-3)$, and $(-4,0)$, respectively. It was observed by Hedden and Watson that the complex $\scC$ is not realizable as the knot Floer complex of a knot in $S^3$ \cite{HeddenWatson}*{Theorem~7}. Nonetheless, Hom proved that the complex $\scC$ is locally equivalent to the knot Floer complex for $K=T_{4,5}\# -T_{2,3;2,5}$ (and hence all knot Floer concordance invariants of $K$ may be computed from $\scC$). See \cite{HomEpsilonUpsilon}*{Lemma~2.1}.

\begin{lem}\label{lem:long-trefoil}
 The complex $\scC$ has $\tau(\scC)=\nu (\scC)=2$ and $\omega (\scC)=3$.
\end{lem}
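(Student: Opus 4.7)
The plan is to compute each of the three quantities directly from the three-generator complex $\scC$. The first two computations are bookkeeping; the interesting step is distinguishing $\omega(\scC)$ from $\nu(\scC)$.

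For $\tau(\scC)$, I would pass to the quotient $\scC/\scU$, where the only nonzero differential becomes $\d\ve{b} = \scV^2\ve{c}$. The $\ve{a}$-tower is thus untouched, so $[\ve{a}]$ generates the $\bF[\scV]$-nontorsion part of $H_*(\scC/\scU)$, while $\scV^2\cdot[\ve{c}] = 0$ makes $[\ve{c}]$ torsion. Hence $\tau(\scC) = A(\ve{a}) = 2$.

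For $\nu(\scC)$, by the lemma preceding the statement it suffices to find the smallest $s$ with an $\bF[\scV]$-nontorsion class in $H_*(\widehat{\scA}_s)$. Since $[\ve{a}] \in H_*(\widehat{\scA}_2)$ is nontorsion (nothing in $\widehat{\scC}$ has a differential into the $\ve{a}$-tower), one gets $\nu(\scC) \le 2$. For the reverse inequality I would enumerate $\widehat{\scA}_1 = \{\scU\ve{a},\, \scV\ve{b},\, \scV^3\ve{c}\}$: the cycles are $\scU\ve{a}$, which is $\scV$-torsion since $\scU\scV = 0$ in $\widehat{\scC}$, and $\scV^3\ve{c} = \d(\scV\ve{b})$, which is a boundary. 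So $H_*(\widehat{\scA}_1)$ contains no $\bF[\scV]$-nontorsion element and $\nu(\scC) = 2$.

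For $\omega(\scC)$, Lemma~\ref{lem:omega-range-values} forces $\omega(\scC) \in \{2, 3\}$, so the goal is to rule out $\omega(\scC) = 2$ by a grading obstruction. Any grading-preserving, $\widehat{\cR}$-equivariant chain map $F\colon \widehat{\cR} \to \widehat{\scC} \otimes \widehat{\cS}_2^\vee$ is determined by the cycle $F(1)$, which must have bigrading $(0,0)$, hence $\delta$-grading $0$. Expanding $F(1) = \sum_{i=-2}^{2} z_i \otimes x_i$ and using that $x_i$ has $\delta$-grading $n = 2$, each $z_i$ must have $\delta$-grading $-2$ in $\widehat{\scC}$. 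The crux is that $\delta(\ve{a}) = \delta(\ve{c}) = -2$ and $\delta(\ve{b}) = -3$, while $\scU$ and $\scV$ each lower $\delta$ by $1$, so the only elements of $\widehat{\scC}$ of $\delta$-grading $-2$ are $\ve{a}$ and $\ve{c}$. This forces $z_{-1} = z_0 = z_1 = 0$ and $F(1) = c_{-2}\,\ve{a}\otimes x_{-2} + c_{2}\,\ve{c}\otimes x_{2}$ for some $c_{\pm 2}\in\bF$. Applying $\d x_{-2} = \scU x_{-1}$ and $\d x_{2} = \scV x_{1}$, the cycle condition $\d F(1) = 0$ reads
\[
c_{-2}\,\scU\ve{a}\otimes x_{-1} + c_{2}\,\scV\ve{c}\otimes x_{1} = 0,
\]
and the two summands are linearly independent in $\widehat{\scC}\otimes\widehat{\cS}_2^\vee$, forcing $c_{-2} = c_{2} = 0$. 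Thus $F(1) = 0$, contradicting $\bF[\scU]$-locality. The single nonroutine ingredient is exactly this $\delta$-grading observation that eliminates the three middle summands; everything else is straightforward.
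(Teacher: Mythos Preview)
Your proposal is correct and follows essentially the same approach as the paper. The only cosmetic differences are that for $\nu$ you inspect $\widehat{\scA}_1$ directly rather than invoking $\nu\ge\tau$, and for $\omega$ you work at the chain level in $\widehat{\scC}\otimes\widehat{\cS}_2^\vee$ while the paper passes to the equivalent staircase reformulation and argues on homology; in both cases the crux is the same grading observation that $\widehat{\scC}$ is empty in bigrading $(-2,-2)$.
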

\begin{proof}
We compute $\nu (K)$ first. The complex $\widehat{\scA}_2(K)$ is generated by $\ve{a}$, $\scV^2\cdot\ve{b}$, and $\scV^4\cdot \ve{c}$. The generator $\ve{a}$ is $\bF[\scV]$-non-torsion, so $\nu (K)\le 2$. On the other hand, $\tau(\scC)=2$, so $\nu (\scC)=2$.

Since $\tau(\scC)=2$, we know $\omega (\scC)\in \{2,3\}$ by Lemma~\ref{lem:omega-range-values}. To see it is not 2, we note that $\omega (\scC)=2$ is equivalent to the existence of cycles $z_{-2},z_0,z_2$, of bigrading $(-4,0)$, $(-2,-2)$, and $(0,-4)$, respectively, such that
\[
\scU\cdot [z_{2}]=\scV\cdot [z_0]\quad \text{and} \quad \scU\cdot [z_0]=\scV\cdot [z_{-2}],
\]
such that $z_{2}$ is $\bF[\scV]$-non-torsion, and $z_{-2}$ is $\bF[\scU]$-non-torsion. The homology $H_*(\widehat{\scC})$ is shown in Figure~\ref{fig:24}.
\begin{figure}[H]
	\centering
	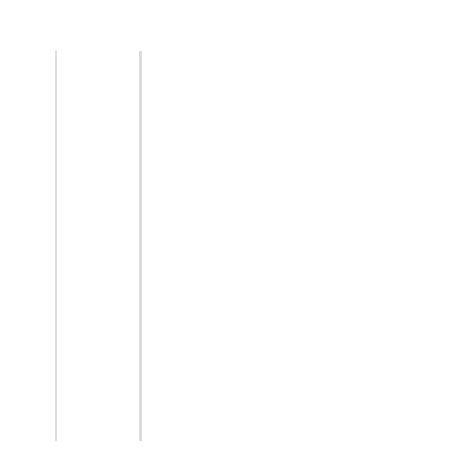
	\caption{The homology group $H_*(\widehat{\scC})$.}\label{fig:24}
\end{figure}

In particular, the module $H_*(\widehat{\scC} )$ is either rank 1 or rank 0 in each bigrading. This forces $z_2=\ve{a}$ and $z_{-2}=\ve{c}$. However $H_*(\widehat{\scC})$ has rank 0 in bigrading $(-2,-2)$, so $z_0=0$. However $\scU\cdot[\ve{a}]\neq 0$ and $\scV\cdot [\ve{c}]\neq 0$, so we obtain a contradiction. Hence $\omega (\scC)=3$.
\end{proof}

\begin{rem}
 The computation of Lemma~\ref{lem:long-trefoil} implies that any knot with complex locally equivalent to $\scC$ has $g_4(K)\ge 3$. We note that $V_0(\scC)=2$, which also implies that $g_4(K)\ge 3$ by~\eqref{eq:rasmussen-bound}. On the other hand, $\omega (K)$ uses only the $\widehat{\cR}$ reduction of $\scC$, and hence also holds for any knot whose $\widehat{\cR}$ reduction is locally equivalent to $\widehat{\scC}$.
\end{rem}

\subsection{A knot where $Y_n(K)$ give better bounds than $V_n(K)$}
\label{sec:computation}
We now consider the knot $K=T_{2,3}\# T_{4,7}\# -T_{5,6}$. The knot complex is the tensor product of the three complexes shown in Figure~\ref{fig:25}.

\begin{figure}[p]
	\centering
	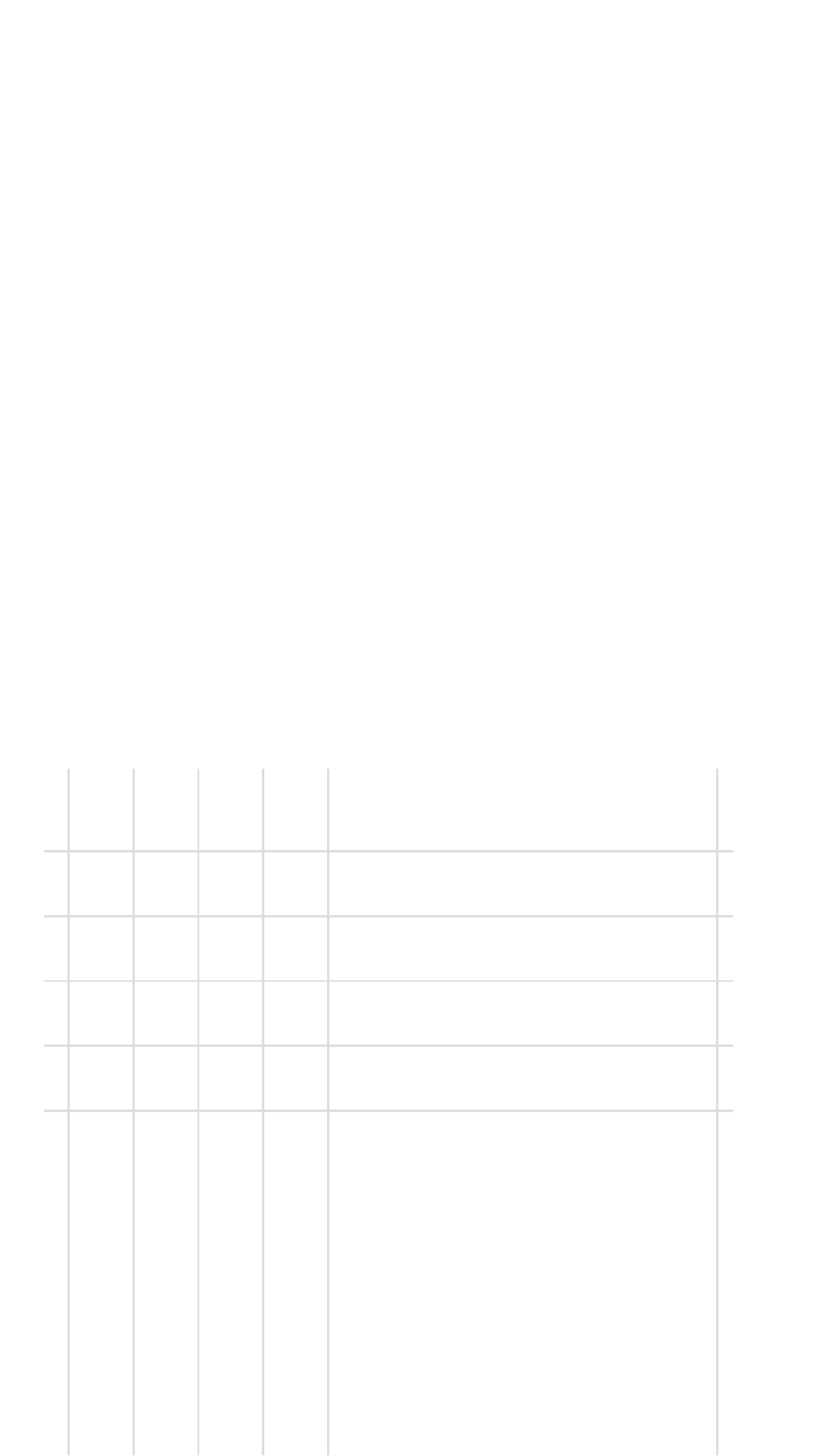
	\caption{The knot Floer complexes of $T_{2,3}$, $T_{4,7}$, and $-T_{5,6}$. The $(\gr_{\ws},\gr_{\zs})$ bigradings of each generator is shown.}\label{fig:25}
\end{figure}

\begin{lem}\label{lem:computation}
 Let $K=T_{2,3}\# T_{4,7}\# -T_{5,6}$.
 \begin{itemize}
 \item  $V_0(K)=1$, and $V_n(K)=0$ for $n\ge 1$.
 \item  $V_0(-K)=1$, and $V_n(-K)=0$ for $n\ge 1$.
 \item $Y_0(K)=1$, $Y_1(K)=1$, and $Y_n(-K)=0$ for $n \ge 1$.
 \end{itemize} In particular, the lower bound on the slice genus from the $V_n$ invariants is $1$, while the lower bound from the $Y_n$ invariants is $2$.
\end{lem}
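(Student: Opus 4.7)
The plan is to compute these invariants directly from the tensor product complex $\cC_K = \cC_{T_{2,3}}\otimes \cC_{T_{4,7}}\otimes \cC_{-T_{5,6}}$, whose three factors appear in Figure~\ref{fig:25}. Since $V_n$ and $Y_n$ depend only on the local equivalence class of $\cC_K$, the first step is to cancel contractible summands of the tensor product to obtain a smaller representative, and then analyze the Alexander-graded subcomplexes grading by grading.

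For $V_n(K)$, I would use that $V_n(K)$ equals $-\tfrac{1}{2}$ times the maximal $\gr_{\ws}$-grading of an $\bF[U]$-non-torsion cycle in $\scA_n(K)$ (up to the shift of~\eqref{eq:A-isomorphism-with-shifts}). For $n=0$, exhibit an explicit non-torsion cycle of grading $-2$ to show $V_0(K)\le 1$, then rule out any non-torsion cycle in grading $0$ to obtain $V_0(K)\ge 1$. For $n\ge 1$, exhibit $\bF[U]$-non-torsion cycles of maximal grading in each $\scA_n(K)$, giving $V_n(K)=0$. The same analysis applied to the dual complex $\cC_K^\vee \simeq \cC_{-K}$ yields the values for $V_n(-K)$.

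For the $Y$-invariants, $Y_0(K)=V_0(K)=1$ is immediate since $\cS_0^\vee\simeq \bF[\scU,\scV]$. The key new computation is $Y_1(K)=V_0(\cC_K\otimes \cS_1^\vee)$, where $\cS_1^\vee$ has generators $x_{-1}, x_0, x_1$ of bigradings $(0,2),(1,1),(2,0)$ with $\d x_0 = \scU x_{-1}+\scV x_1$. The Alexander-zero subcomplex of $\cC_K\otimes \cS_1^\vee$ decomposes as $\scA_1(K)\otimes x_{-1}\oplus \scA_0(K)\otimes x_0\oplus \scA_{-1}(K)\otimes x_1$, and an $\bF[U]$-non-torsion cycle in it is exactly a compatible triple $(z_{-1}, z_0, z_1)$ with $z_i\in\scA_{-i}(K)$ satisfying the staircase relation $\scU[z_1]=\scV[z_{-1}]$ in homology; maximizing the $\delta$-grading over such triples should yield $Y_1(K)=1$. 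Showing $Y_n(-K)=0$ for $n\ge 1$ then reduces to producing analogous compatible tuples of non-torsion cycles at the top grading in $\cC_{-K}\otimes\cS_n^\vee$, which is feasible because $-K$ admits non-torsion cycles of higher $\gr_{\ws}$-grading at low Alexander gradings.

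The main obstacle is the sheer size of the tensor product: even after cancellation, $\cC_K$ has on the order of a few hundred generators spread across many bigradings, and certifying maximality of non-torsion cycles requires a careful enumeration — this is precisely what the authors' SageMath program automates. A hand computation would first replace each factor by a minimal local-equivalence representative (the complexes in Figure~\ref{fig:25} already appear minimal) and then perform the tensor-product cancellations systematically, with the staircase-relation condition~\eqref{eq:staircase-relation} as the key constraint to verify for the $Y$-invariant side.
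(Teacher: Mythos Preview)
Your outline for the $V_n(K)$ computation matches the paper's approach: list the generators in bigrading $(0,0)$, verify that no $\bF[U]$-non-torsion cycle exists there, and then exhibit an explicit non-torsion cycle $X$ in bigrading $(0,-2)$ to pin down $V_0(K)=1$ and $V_1(K)=0$. The paper does exactly this, writing out $X$ as a thirteen-term sum in the $x_iy_jz_k$.

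Where you diverge is in the $Y_1(K)$ computation. You propose to analyze $A_0(\cC_K\otimes\cS_1^\vee)$ directly via compatible triples $(z_{-1},z_0,z_1)$ in the full $297$-generator complex, and you flag the size as the main obstacle requiring machine assistance. The paper instead uses a simplification that you missed: since $\cCFK^-(T_{2,3})\simeq \cS_1$ and $\cS_1\otimes\cS_1^\vee$ is locally trivial, one has
\[
Y_1(K)=V_0\bigl(\cS_1^\vee\otimes\cC_{T_{2,3}}\otimes\cC_{T_{4,7}}\otimes\cC_{-T_{5,6}}\bigr)=V_0(T_{4,7}\#-T_{5,6}).
\]
This collapses the problem to a $99$-generator complex, where a short grading argument shows $V_0>0$ (no $y_n$ pairs with $z_4$ in bigrading $(0,0)$) and an explicit ten-term cycle $Z$ in bigrading $(-2,-2)$ gives $V_0\le 1$. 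So the paper's computation is done entirely by hand, not by software. Your approach would certainly work and is the generic strategy, but it overlooks the structural coincidence that makes this particular example clean.

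A minor point: your proposed first step of ``cancelling contractible summands'' is vacuous here. All three factors in Figure~\ref{fig:25} are staircases with only $\scU^a$- and $\scV^b$-weighted arrows, so no arrow in the tensor product carries a unit coefficient and there is nothing to cancel.
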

\begin{proof} We begin by showing that $V_0(K)>0$. For $V_0(K)=0$, we would need an $\bF[U]$-non-torsion cycle in $(\gr_{\ws},\gr_{\zs})$-bigrading $(0,0)$ (we remind the reader of the isomorphism in equation~\eqref{eq:A-isomorphism-with-shifts}). The set of $\bF$ generators of $\cCFK^-(K)$ in grading $(0,0)$ is
\begin{equation}
x_2y_{10}z_0,\quad x_2y_9z_1,\quad x_0y_7z_3,\quad x_2y_3z_5,\quad x_0y_1x_7, \quad \text{and} \quad x_0y_0z_8. \label{eq:elements(0,0)}
\end{equation}
It is easy to see that no linear combination of the elements in~\eqref{eq:elements(0,0)} is an $\bF[U]$-non-torsion cycle. Hence $V_0(K)>0$

Next, we will show that $V_1(K)=0$. Note that this will imply that $V_0(K)=1$, since $V_1(K)\ge V_0(K)-1$. This amounts to finding an $\bF[U]$-non-torsion cycle in bigrading $(0,-2)$. Such an element is
\[
\begin{split}
X=&\scV x_2y_{10}z_0+x_2y_8z_2+x_0y_6z_4+x_2y_2z_6+ \scV x_0y_0z_8\\
 +&\scV x_2 y_9z_1+\scV x_0y_7z_3+\scU x_1y_8z_3\\
+&\scV x_2y_3z_5+\scU\scV x_1y_4z_5+\scU x_0y_5z_5\\
+& \scV x_1 y_0z_7+\scU x_2y_1 z_7.
\end{split}
\]
Indeed, it is easily checked that $X$ is a cycle in grading $(0,-2)$. To see that it is $\bF[U]$-non-torsion, we note that after inverting $U$, each $z_{2i+1}$ becomes null-homologous. Further, the top line of $X$ is homologous to 
\[
\scU^{-10}\scV x_0y_0(\scV^{10} z_0+\scU \scV^{6} z_2+\scU^3 \scV^3 z_4+\scU^{6} \scV z_6+\scU^{10} z_8),
\]
which is clearly the generator of homology after inverting $U$.

Finally, we consider $Y_n(K)$. We will prove $Y_1(K)=1$, and leave the computation that $Y_n(K)=0$ for $n\ge 2$ to the reader. By definition
\[
Y_1(K)=V_0\left(\cS_1^\vee \otimes \cCFK^-(T_{2,3})\otimes \cCFK^-(T_{4,7})\otimes \cCFK^-(-T_{5,6})\right).
\]
Since $\cCFK^-(T_{2,3})\simeq \cS_1$, and $\cS_1^\vee\otimes \cS_1$ is locally equivalent to $\bF[\scU,\scV]$, it is sufficient to show
\[
V_0(T_{4,7}\# -T_{5,6})=1.
\]
To see that $V_0(T_{4,7}\#-T_{5,6})>0$, we note that any $\bF[U]$-non-torsion element must have a summand involving a non-trivial multiple of $y_{j_i}\otimes z_{2i}$, for each $i\in \{1,2,3,4\}$ (i.e., each $z_{2i}$ must be represented in the summand). There is no  $y_n$ such that $\scU^i \scV^j y_n z_4$ has bigrading $(0,0)$ for any $i,j\ge 0$, and hence there is no $\bF[U]$-non-torsion element in bigrading $(0,0)$. Hence $V_0>0$. To see that $V_0=1$, we note that the element
\[
\begin{split}
Z&=\scU \scV^2 y_{10}z_0+\scU\scV y_8z_2+\scU^2 y_6z_4+\scU^3 y_4 z_6+\scU^2\scV y_0z_8\\
&+\scU \scV^2 y_9z_1+\scU^2 \scV y_7z_3+\scU^3 y_5z_5+ \scU^5 y_3z_7+\scU^2 \scV y_1z_7.
\end{split}
\]
is an $\bF[U]$-non-torsion cycle in grading $(-2,-2)$.

We leave the computations of $V_n(-K)$ and $Y_n(-K)$ to the interested reader.
\end{proof}

\begin{rem}\label{rem:more-on-example-knot}
Let $K=T_{2,3}\# T_{4,7}\#-T_{5,6}$, as in Lemma~\ref{lem:computation}.
\begin{enumerate}
\item One may compute that $\underline{V}_0(K)=2$ and $\overline{V}_0(K)=1$, which additionally gives the slice genus bound $g_4\ge 2$, by Theorem~\ref{thm:involutive}. This may be computed using the software on the second author's website.
\item The Levine--Tristram signature function also has a maximum of 2 and minimum of $-4$ (taken over generic $t$), and hence gives the same genus and clasp bounds as the $Y_n$ invariants. An example where the $Y_n$ invariants give an improvement over both the $V_n$ invariants and the Levine--Tristram signature is the knot $J=T_{2,11}\# T_{4,7}\# -T_{5,6}$. Using a SageMath program, the authors computed  that $(V_0,V_1,V_2,V_3,V_4,V_5)$ are $(3,2,2,1,1,0)$ and $\max_{t\in [0,1]} \sigma_J(t)=2$ and $\min_{t\in [0,1]}\sigma_J(t)=-10$ (see \cite{LitherlandTorusKnots} for more on computing the signature function). In particular, these invariants both give the slice bound $g_4\ge 5$. On the other hand,  $(Y_0,Y_1,Y_2,Y_3,Y_4,Y_5,Y_6)$ may be computed to be $(3,2,2,1,1,1,0)$, which gives the slice bound of $g_4\ge 6$.
\end{enumerate}
\end{rem}

\subsection{An algebraic example}

We were only able to find examples of knots where the genus bound from $Y_n$ was at most one greater than the genus bound from the $V_n$ invariants. We are not aware of any algebraic constraint that would imply this. As an example, we consider the knot-like complex in Figure~\ref{fig:26}, which was introduced by Ozsv\'{a}th, Stipsicz, and Szab\'{o} in \cite{OSSUpsilon}*{Figure~6}. (It is not known whether the local equivalence class of $\scC$ represents any knot in $S^3$).  We consider the $V_n$ and $Y_n$ invariants of the complex $\cCFK^-(T_{2,5})\otimes \scC$. For this complex, one may compute that $(V_0,V_1,V_2,V_3)=(2,1,1,0)$ and $(Y_0,Y_1,Y_2,Y_3,Y_4,Y_5)=(2,2,2,1,1,0)$. In this case, the best genus bound from the $V_n$ comes from $V_0$ or $V_2$, which both give $g\ge 3$. The best genus bound from the $Y_n$ comes from $Y_2$ or $Y_4$, which both give $g\ge 5$.

\begin{figure}[H]
	\centering
\begingroup%
  \makeatletter%
  \providecommand\color[2][]{%
    \errmessage{(Inkscape) Color is used for the text in Inkscape, but the package 'color.sty' is not loaded}%
    \renewcommand\color[2][]{}%
  }%
  \providecommand\transparent[1]{%
    \errmessage{(Inkscape) Transparency is used (non-zero) for the text in Inkscape, but the package 'transparent.sty' is not loaded}%
    \renewcommand\transparent[1]{}%
  }%
  \providecommand\rotatebox[2]{#2}%
  \newcommand*\fsize{\dimexpr\f@size pt\relax}%
  \newcommand*\lineheight[1]{\fontsize{\fsize}{#1\fsize}\selectfont}%
  \ifx\svgwidth\undefined%
    \setlength{\unitlength}{97.95218694bp}%
    \ifx\svgscale\undefined%
      \relax%
    \else%
      \setlength{\unitlength}{\unitlength * \real{\svgscale}}%
    \fi%
  \else%
    \setlength{\unitlength}{\svgwidth}%
  \fi%
  \global\let\svgwidth\undefined%
  \global\let\svgscale\undefined%
  \makeatother%
  \begin{picture}(1,0.9337716)%
    \lineheight{1}%
    \setlength\tabcolsep{0pt}%
    \put(0,0){\includegraphics[width=\unitlength,page=1]{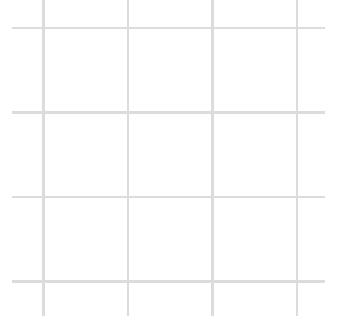}}%
    \put(0.89005917,0.45634231){\color[rgb]{0,0,0}\makebox(0,0)[lt]{\lineheight{1.25}\smash{\begin{tabular}[t]{l}$\scriptstyle{\scV^3}$\end{tabular}}}}%
    \put(0.49759433,0.87737349){\color[rgb]{0,0,0}\makebox(0,0)[t]{\lineheight{1.25}\smash{\begin{tabular}[t]{c}$\scriptstyle{\scU^3}$\end{tabular}}}}%
    \put(0,0){\includegraphics[width=\unitlength,page=2]{fig26.pdf}}%
    \put(0.49759433,0.0133321){\color[rgb]{0,0,0}\makebox(0,0)[t]{\lineheight{1.25}\smash{\begin{tabular}[t]{c}$\scriptstyle{\scU^3}$\end{tabular}}}}%
    \put(0.10989037,0.45634231){\color[rgb]{0,0,0}\makebox(0,0)[rt]{\lineheight{1.25}\smash{\begin{tabular}[t]{r}$\scriptstyle{\scV^3}$\end{tabular}}}}%
    \put(0.70715819,0.70572528){\color[rgb]{0,0,0}\makebox(0,0)[rt]{\lineheight{1.25}\smash{\begin{tabular}[t]{r}$\scriptstyle{\scU\scV}$\end{tabular}}}}%
  \end{picture}%
\endgroup%

	\caption{Ozsv\'{a}th, Stipsicz and Szab\'{o}'s knot-like complex $\scC$.}\label{fig:26}
\end{figure}

\subsection{Further questions}

Here are a few interesting questions:
\begin{enumerate}
\item Are there knots such that $c^+_4(K)-g_4(K)$ is arbitrarily large? \footnote{Daemi and Scaduto's recent work \cite{DaemiScadutoClasp} implies that this is true for the family $\#^n 7_4$.}
\item The figure-8 knot can be unknotted with either a positive or a negative crossing change. Hence $c_4>0$ while $c_4^+=c_4^-=0$. In general, can the difference between $c_4(K)$ and $c_4^+(K)+c_4^-(K)$ be arbitrarily large?
\item Are there knots where $\omega^+(K)>\nu^+(K)+1$.
\item Are there any knot Floer concordance invariants which can be used to show that a knot has $c^+(K)>g_4(K)$?
\end{enumerate}

\bibliographystyle{plain}
\bibliography{biblio}
\end{document}